\newtheorem{theorem}{Theorem}[section]
\newtheorem{theorem1}{Theorem}[section] 
\newtheorem{corollary}{Corollary}[section]
\newtheorem{lemma}{Lemma}[section]
\newtheorem{remark}{Remark}[section]
\numberwithin{equation}{section}
\numberwithin{theorem}{section}
\def\tagform@#1{\maketag@@@{\ignorespaces#1\unskip\@@italiccorr}}
\let\orgtheequation\theequation
\def\theequation{(\orgtheequation)}
\begin{document}
	\title{Stability and Instability on the De Gregorio Modification  of the Constantin-Lax-Majda model }
	\begin{abstract}
		The Constantin-Lax-Majda (CLM) model and the De Gregorio model which is a modification of the CLM model are well-known for their ability to emulate the behavior of the 3D Euler equations, particularly their potential to develop finite-time singularities. The stability properties of the De Gregorio model on the torus near the ground state $-\sin\theta$ have been well studied. However, the stability analysis near excited states $-\sin k\theta$ with $k\ge 2$ remains challenging.  This paper focuses on analyzing the stability and instability of the  De Gregorio model on torus around the first excited state $-\sin 2\theta$.  The linear and nonlinear instability are established for a broad class of initial data, while nonlinear stability is proved for another large class of initial data in this paper. Our analysis  reveals that  solution behavior to the De Gregorio model near excited states demonstrates different stability patterns depending on initial conditions. One of new ingredients in our  instability analysis involves deriving a second-order ordinary differential equation (ODE) governing the Fourier coefficients of solutions and examining the spectral properties of a positive definite quadratic form emerging from this ODE. The approach of this paper would be applicable to other related models and problems.		
	\end{abstract}
	\author{Jie Guo$^{*}$\,and\   QuanSen Jiu$^{\dagger}$}
	
	\address{$^{*}$School of Mathematical Sciences, Capital Normal University, Beijing, 100048, P.R. China.}
	
	\email{2230501023@cnu.edu.cn}
	\address{$^{\dagger}$School of Mathematical Sciences, Capital Normal University, Beijing, 100048, P.R. China.}
	
	\email{jiuqs@cnu.edu.cn}
	
	\maketitle
	
	\medskip
	{\bf Key words:}
	CLM model, De Gregorio model, stability, instability, excited states.

	{\bf MSC 2020:} 
	35B10, 35B35, 35C10, 35Q35. 

	\section{Introduction}
	The classical Constantin-Lax-Majda (CLM) model is
	\begin{equation}
		\partial_{t}\omega = \omega H\omega,
		\label{CLM}
	\end{equation}
	where $\omega=\omega(t,x)$ is an unknown function, which is defined on $\mathbb{R}_{+}\times\Omega$. Here, $\Omega$ can be either the entire real line $\mathbb{R}$ or a torus $\mathbb{T}$. For simplicity, we take $\mathbb{T}=[-\pi,\pi]$. And $H$ represents the Hilbert transform which is
	\begin{align}
		H\omega(\theta) = \frac{1}{2\pi}P.V.\int_{-\pi}^{\pi}\cot\frac{\theta-\phi}{2}\omega(\phi)d\phi
	\end{align}
	if $\omega$ is defined on the torus $\mathbb{T}$, and
	\begin{align}
		H\omega(\theta) = \frac{1}{\pi}P.V.\int_{-\infty}^{\infty}\frac{\omega(\phi)}{\theta-\phi}d\phi
	\end{align}
	if $\omega$ is defined on the whole line $\mathbb{R}.$
	The CLM model is renowned for analyzing potential singularities in the three-dimensional Euler equations, capturing the essential dynamics of the three-dimensional mechanism (see \cite{CL1985}). De Gregorio \cite{DS1996} suggested to include a convection term to the
	Constantin-Lax-Majda model. The modified equation is expressed as
	\begin{equation}\label{DG}
		\left\{
		\begin{aligned}
			&\partial_{t}\omega + u\partial_{\theta}\omega = \omega\partial_{\theta}u,\\
			&\partial_{\theta}u = H\omega,
		\end{aligned}\right.
	\end{equation}
	where $H$ represents the Hilbert transform.  Since $u$ is determined by $\omega$ only up to a constant, \ref{DG} is still incomplete, certain gauge condition on $u$ such as $\int_{\mathbb{T}}ud\theta = 0$ or $u(t,0) = 0$ is imposed in \cite{JS2019}. And Jia, Stewart and Sverak verified that the solutions to \ref{DG} under different gauges are equivalent in \cite{JS2019}. By interpolating the CLM model and the De Gregorio model, Okamoto, Sakajo and Wunsch \cite{OS2008} further introduced the following one-parameter family of models
	\begin{equation} \label{gCLM}
		\left\{
		\begin{aligned}
			&\partial_{t}\omega + au\partial_{\theta}\omega = \omega\partial_{\theta}u,\\
			&\partial_{\theta}u = H\omega.
		\end{aligned}
		\right.
	\end{equation}
	Here, $a$ is a parameter and $H$ represents the Hilbert transform. If $a = 0,$ \ref{gCLM} reduces to the CLM model \ref{CLM}. If $a = 1,$ it becomes the De Gregorio model \ref{DG}.

	It was shown   that the De Gregorio model \ref{DG} on the whole line $\mathbb{R}$ has a finite time singularity for some smooth odd initial data in \cite{CH2021} and  for initial data with lower regularity in \cite{EJ2020}. Further studies on the \ref{DG} and its modifications are referred to \cite{H2024,C2020,C2021,AL2024,WM2011,F2023} and references therein. Moreover, the role of the convection term was addressed  in \cite{HL2009,OH2013,EJ2020} and references therein.
	
	Numerical simulations indicate that, for general smooth initial data, the solutions of the De Gregorio model \ref{DG} tend to  ground states $A \sin(\theta-\theta_{0})$ as $t\rightarrow \infty$ (see \cite{OS2008}). Jia, Steward and Sverak  \cite{JS2019} first investigated rigorously the stability of the De Gregorio model \ref{DG} around the ground state $-\sin\theta$, by using a profound spectral analysis approach.  In \cite{LL2020}, Lei, Liu and Ren established the global well-posedness of \ref{DG} for general initial data with non-negative (or non-positive) vorticity, based on the discovery of a new conserved quantity.
	The  stability of the De Gregorio model \ref{DG} around the ground state $-\sin\theta$   was shown in \cite{LL2020} as well, by introducing a  crucial basis  to analyze the corresponding linearized equation and introducing an effective and weighted Hilbert space  to study the  nonlinear stability.  However, the study on the stability around  the first excited state $-\sin2\theta$ or other excited states $-\sin k\theta$ for $k\ge 3$ remains a difficult problem, as mentioned in \cite{JS2019, LL2020}.

	In this paper, we are concerned with the stability and instability of the De Gregorio model \ref{DG}  around   the first excited state $-\sin2\theta$ on the torus $\mathbb{T}$. To this end, we define
	\begin{align}
		\omega = -\sin2\theta + \eta,\ u = \frac{1}{2}\sin2\theta + v.
	\end{align}
	The perturbations $\eta,v$ then satisfy
	\begin{equation}\label{perturbation equ}
		\left\{\begin{split}
			&\partial_{t}\eta + L\eta =N(\eta),\\
			&\partial_{\theta}v = H\eta.\\
		\end{split}\right.
	\end{equation}
	Then the linearized equation to \ref{DG} for $\eta$ and $v$ reads as
	\begin{equation}\label{etav}
		\left\{
		\begin{aligned}
			& \eta_{t} + L\eta = 0,\\
			& \partial_{\theta}v = H\eta,\\
		\end{aligned}\right.
	\end{equation}
	where $L$ is a linearized operator
	\begin{align}\label{operator}
		L\eta = \frac{1}{2}\sin2\theta\partial_{\theta}\eta - \cos2\theta\eta + \sin2\theta H\eta - 2\cos2\theta v
	\end{align}
	and $N(\eta)$ is the nonlinear term
	\begin{align}
		N(\eta)=\partial_{\theta}v\eta-v\partial_{\theta}\eta.
	\end{align}
	Since \eqref{etav} is still incomplete, we impose  initial data and a gauge condition respectively, which are
	\begin{align}\label{1228}	
		\eta(0,\theta) = \eta_{0}(\theta),\quad v(t,0)=0.
	\end{align}
	Motivated by \cite{LL2020}, we set
	\begin{align}\label{tek}
		\tilde{e}_{k}^{(o)} = \frac{e_{k+2}^{(o)}}{k+2}-\frac{e_{k}^{(o)}}{k},\quad k\geq 1,
	\end{align}
	where $e_{k}^{(o)}=\sin k\theta,$  and define
	\begin{align}\label{HDW}
		\mathcal{H}_{DW} = \big\{\eta\in H^{1}(\mathbb{T})| \eta\ is\ odd, \int_{\mathbb{T}}\frac{|\partial_{\theta}\eta|^{2}}{|\sin\theta|^{2}}d\theta<\infty\big\}.
	\end{align}
	Then  $(\mathcal{H}_{DW},\rho)$ is a Hilbert space with inner product defined as
	\begin{align}\label{proHDW}
		\left\langle\xi,\eta\right\rangle_{\rho} = \int_{-\pi}^{\pi}\rho\partial_{\theta}\xi\partial_{\theta}\eta d\theta,
	\end{align}
	where $\rho = \frac{1}{4\pi\sin^{2}\theta}$.
	It can be proved that
	$\{\tilde{e}_{k}^{(o)}, k\geq 1\}$ is a complete orthonormal basis for $\mathcal{H}_{DW}$ (see Lemma \ref{lebasis}) and hence the functions in $\mathcal{H}_{DW}$ can be expressed by the linear combinations of $\{\tilde{e}_{k}^{(o)}, k\geq 1\}$.

	Before state our main results, we introduce some notations. Denote
	\begin{align*}
		&L^{2}(\mathbb{T}) = \left\{f| f\in L^{2}(-\pi,\pi), f\ is\ periodic\ on\  [-\pi,\pi]\right\},\\
		&H^{m}(\mathbb{T}) = \left\{f| f^{(k)}\in L^{2}(-\pi,\pi), f^{(k)}\ is\ periodic\ on\  [-\pi,\pi], for\ all\ 0\leq k\leq m\right\},
	\end{align*}
	where $f^{(k)}$ means the $k-$th order derivative of $f$.
	
	Throughout this paper, $\|\cdot\|_{L^{p}}, \|\cdot\|_{H^{m}}$ and $\|\cdot\|_{L^{\infty}}$ denote the norms of $L^{p}(\mathbb{T}), H^{m}(\mathbb{T})$ and $L^{\infty}(\mathbb{T})$, respectively.
	Similarly, $\left\langle\cdot,\cdot\right\rangle$ means the usual inner product in $L^{2}(\mathbb{T})$, that is
	\begin{align*}
		\left\langle f,g\right\rangle=\int_{-\pi}^{\pi}fg d\theta.
	\end{align*}
	Notice that the norm in the Sobolev space $\mathcal{H}_{DW}$ can be expressed as follows:
	\begin{align*}
		\|\eta\|_{\mathcal{H}_{DW}} = \left(\int_{-\pi}^{\pi}\rho(\partial_{\theta}\eta)^{2}d\theta\right)^{1/2}  = \|\rho^{1/2}\partial_{\theta}\eta\|_{L^2}.
	\end{align*}
	
	Now we are ready to present our main results. The first result is the global existence and uniqueness of \ref{etav} with odd initial data, which is
	\begin{theorem}\label{the existence}
		Assume that the initial data $\eta_{0}\in \mathcal{H}_{DW}$ and $\rho^{1/2}\partial_{\theta}\eta_{0}\in H^{m}$ with $m>3$ an integer. Then for any $T>0,$ there exists a unique classical and odd solution to \ref{etav} and \ref{1228} satisfying $\eta\in C([0,T];\mathcal{H}_{DW})$ and
		\begin{align*}
			\rho^{1/2}\partial_{\theta}\eta\in C([0,T]; H^{m}(\mathbb{T}))\cap C^{2}([0,T];H^{m-2}(\mathbb{T})).
		\end{align*}
	\end{theorem}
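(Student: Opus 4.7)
My plan is to construct solutions through a Galerkin approximation in the complete orthonormal basis $\{\tilde e_k^{(o)}\}_{k\ge 1}$ of $\mathcal{H}_{DW}$, establish uniform weighted energy estimates, and then pass to the limit; since \ref{etav} is linear, uniqueness will drop out of the same estimate. For each $N\ge 1$ I would set $V_N=\mathrm{span}\{\tilde e_1^{(o)},\dots,\tilde e_N^{(o)}\}$, write $\eta^N(t,\theta)=\sum_{k=1}^N c_k^N(t)\tilde e_k^{(o)}(\theta)$, project \ref{etav} onto $V_N$ via $\langle\cdot,\cdot\rangle_\rho$, and solve the resulting linear system of ODEs for $(c_k^N)$ with $c_k^N(0)=\langle\eta_0,\tilde e_k^{(o)}\rangle_\rho$. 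The coefficient matrix is bounded because the coefficients of $L$ are in $L^\infty(\mathbb{T})$ and the map $\eta\mapsto v$ (with the gauge $v(t,0)=0$) gains one derivative on $V_N$, so $\eta^N$ exists globally in time. Each $\eta^N$ remains odd because every $\tilde e_k^{(o)}$ is odd and $L$ preserves odd parity: the Hilbert transform sends odd to even and the gauge forces $v$ to be odd.

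The central step is the weighted energy inequality
\begin{equation*}
\frac{1}{2}\frac{d}{dt}\|\eta^N\|_{\mathcal{H}_{DW}}^2 = -\langle L\eta^N,\eta^N\rangle_\rho \le C\,\|\eta^N\|_{\mathcal{H}_{DW}}^2,
\end{equation*}
which I would prove term by term. The transport piece $\tfrac{1}{2}\sin 2\theta\,\partial_\theta\eta$ is handled by integrating by parts in the weighted inner product and using $\rho\sin 2\theta = \cos\theta/(2\pi\sin\theta)$ so that $\partial_\theta(\rho\sin 2\theta)=O(\rho)$ and all apparent singularities cancel. The stretching term $-\cos 2\theta\,\eta$ is controlled by a Hardy-type inequality since $\eta(0)=\eta(\pm\pi)=0$ forces $\eta/\sin\theta\in L^2$. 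The Hilbert-transform term $\sin 2\theta\,H\eta$ exploits the $L^2$-boundedness of $H$ together with the smoothness of $\sin 2\theta/\sin\theta=2\cos\theta$. The non-local piece $-2\cos 2\theta\,v$ is controlled because $\partial_\theta v=H\eta$ and $v(t,0)=0$ yield $v\in H^2$ when $\eta\in H^1$. Gronwall then furnishes $\|\eta^N(t)\|_{\mathcal{H}_{DW}}\le e^{CT}\|\eta_0\|_{\mathcal{H}_{DW}}$ uniformly in $N$ for every $T>0$.

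For the $H^m$ regularity of $w:=\rho^{1/2}\partial_\theta\eta$, I would derive the equation satisfied by $w$, apply $\partial_\theta^j$ for $0\le j\le m$, pair with $\partial_\theta^j w$ in $L^2$, and control the resulting commutators with smooth multipliers and with $H$ by Moser-type estimates, yielding $\|\rho^{1/2}\partial_\theta\eta^N\|_{H^m}\le C(T)\|\rho^{1/2}\partial_\theta\eta_0\|_{H^m}$. Weak-$*$ compactness in $L^\infty(0,T;\mathcal{H}_{DW})$ and $L^\infty(0,T;H^m)$, combined with Aubin-Lions using the bound on $\partial_t\eta^N=-L\eta^N$ in a weaker norm, extracts a subsequential limit solving \ref{etav} in the claimed class, with time continuity obtained in the standard way. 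Uniqueness follows from the $\mathcal{H}_{DW}$ estimate applied to the difference of two solutions. The claim $\rho^{1/2}\partial_\theta\eta\in C^2([0,T];H^{m-2})$ is read off from $\partial_{tt}\eta=L^2\eta$: the operator $L$ loses at most one spatial derivative (only the transport piece), so $L^2$ loses at most two, while the Sobolev embedding from $m>3$ ensures that the solution is classical.

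The hardest part is the weighted estimate in the second step. The weight $\rho=1/(4\pi\sin^2\theta)$ blows up precisely at the zeros $\theta=0,\pm\pi$ of $\sin 2\theta$, and at first glance several terms in $\int\rho\,\partial_\theta(L\eta)\,\partial_\theta\eta\,d\theta$ appear divergent. The delicate cancellations rely on the identity $\sin 2\theta=2\sin\theta\cos\theta$, the odd parity of $\eta$ propagated by the equation, and the gauge $v(t,0)=0$; without any one of these the estimate does not close in $\mathcal{H}_{DW}$. Once this weighted inequality is in place, the remaining steps (Galerkin limits, Gronwall, and time regularity) are essentially standard.
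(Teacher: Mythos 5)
Your proposal is correct and follows essentially the same route as the paper: a Galerkin scheme in the basis $\{\tilde e_k^{(o)}\}$, energy estimates for $u=\rho^{1/2}\partial_\theta\eta$ (which the paper observes is just the trigonometric polynomial $-\frac{1}{\sqrt\pi}\sum_k\tilde\eta_k^{(o)}\sin(k+1)\theta$, so the weighted $H^m$ bounds become ordinary ones), weak compactness, and uniqueness from the linear $L^2$ estimate. The "delicate cancellation" you allude to is concretely that the $2\cos 2\theta\,H\eta$ contributions from $\partial_\theta(\sin 2\theta\,H\eta)$ and $\partial_\theta(-2\cos 2\theta\,v)$ cancel exactly, after which every term in $\partial_\theta L\eta$ carries a factor $\sin 2\theta=2\sin\theta\cos\theta$ that absorbs the singular weight — this is why the paper's operator $L_1(u)=-\sqrt\pi\,\rho^{1/2}\partial_\theta L\eta$ has smooth coefficients.
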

	To prove Theorem \ref{the existence}, we employ Galerkin's method through the basis $\{\tilde{e}_{k}^{(o)}\}$ and construct the approximate solutions $\eta_{n} = \sum_{k=1}^{n}\tilde{\eta}_{k}^{(o)}(t)\tilde{e}_{k}^{(o)}$. In the process of the proof, we obtain  uniform estimates for the Fourier coefficients of the solutions as a byproduct (see Corollary \ref{coro}), which will ensure the rigor of our subsequent proof.
	
	For more  general initial data, we can also establish the following result.
	\begin{theorem1}{\hspace{-.2cm}$^\prime$}\label{the gexistence}
		Asuume that $\eta_{0}\in H^{m}$ with $m>3$  an integer. Then for any $T>0$, there exits a unique classical solution to \ref{etav} and \ref{1228} satisfying
		\begin{align*}
			\eta\in C([0,T];H^{m})\cap C^{2}([0,T];H^{m-2}).
		\end{align*}
	\end{theorem1}
	The result is derived by constructing approximate solutions through general basis $\{\sin k\theta, k\geq 1\}\cup\{\cos k\theta, k\geq 0\}$. The proof is analogous to that of Theorem \ref{the existence}, so we omit the details here.

	Based on Theorem \ref{the existence}, we obtain the following   instability results to the linearized equations \ref{etav}  around the excited state $-\sin2\theta$  for some specific initial data  $\eta_0 = \omega_{0} + \sin2\theta.$
	\begin{theorem}\label{the linear instability}
		Suppose that the conditions in Theorem \ref{the existence} are satisfied and  $\left\langle-L\eta_0, \eta_0\right\rangle_\rho\ge 0$ for $\eta_0\neq 0$. Then there exist   two absolute constants  $\lambda_{2}>\lambda_{1}>0$ such that the solution presented in Theorem \ref{the existence} satisfies
		\begin{align}\label{instability25}
			0< J_{1}^{1/2}(t)< \|\eta\|_{\mathcal{H}_{DW}} < J_{2}^{1/2}(t),
		\end{align}
		for $0<t<\infty$, where
		\begin{align}\label{Ji}
			J_{i}(t) = \frac{\left\langle\eta_0, \eta_0\right\rangle_{\rho} + \frac{1}{\sqrt{ \lambda_i}}\left\langle -L \eta_0, \eta_0\right\rangle_{\rho}}{2} e^{2 \sqrt{ \lambda_{i} }t} + \frac{\left\langle\eta_0, \eta_0\right\rangle_{\rho} - \frac{1}{\sqrt{ \lambda_i}}\left\langle -L \eta_0, \eta_0\right\rangle_{\rho}}{2} e^{-{2 \sqrt{\lambda_{i} }t}}, i=1,2.
		\end{align}
	\end{theorem}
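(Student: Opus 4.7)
The plan is to reduce \eqref{instability25} to a second-order differential inequality for $f(t):=\|\eta(t)\|_{\mathcal{H}_{DW}}^2$ and then invoke a standard comparison argument against the bounding functions $J_1,J_2$ in \eqref{Ji}. These are defined precisely so that $J_i''=4\lambda_i J_i$ with matching initial data $J_i(0)=\|\eta_0\|_\rho^2$ and $J_i'(0)=2\langle -L\eta_0,\eta_0\rangle_\rho$. Granting the two-sided bound $4\lambda_1 f\le\ddot f\le 4\lambda_2 f$, the estimate \eqref{instability25} follows by integrating $(f-J_1)''\ge 4\lambda_1(f-J_1)$ and $(J_2-f)''\ge 4\lambda_2(J_2-f)$ with vanishing initial data, via the factorization $h''-\mu^2 h=(h'-\mu h)'+\mu(h'-\mu h)$ with $\mu=2\sqrt{\lambda_i}$.

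To derive the differential inequality, I would expand $\eta=\sum_k\tilde\eta_k(t)\,\tilde e_k^{(o)}$ in the orthonormal basis of Lemma \ref{lebasis}. A direct computation using the elementary identity
\begin{align*}
L\sin k\theta=\frac{(k-2)^2}{4k}\sin(k+2)\theta+\frac{4-k^2}{4k}\sin(k-2)\theta
\end{align*}
together with the telescoping recursion $\sin(k+2)\theta=(k+2)(\tilde e_k^{(o)}+\sin k\theta/k)$ should yield the tridiagonal action
\begin{align*}
L\tilde e_k^{(o)}=-\gamma_{k-2}\,\tilde e_{k-2}^{(o)}+(\gamma_k-\gamma_{k-2})\,\tilde e_k^{(o)}+\gamma_k\,\tilde e_{k+2}^{(o)},\qquad\gamma_k:=\frac{k^2(k+4)}{4(k+2)^2},
\end{align*}
and hence the decomposition $L=D+A$ where $D$ is diagonal with entries $\beta_k:=\gamma_k-\gamma_{k-2}$ (self-adjoint in $\langle\cdot,\cdot\rangle_\rho$) and $A$ is tridiagonal antisymmetric (anti-self-adjoint); note that the odd- and even-$k$ sectors decouple. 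Differentiating $f$ twice along $\eta_t=-L\eta$, and using $\langle A\eta,\eta\rangle_\rho=0$ together with the algebraic identity $2\langle D\eta,A\eta\rangle_\rho=\langle\eta,[D,A]\eta\rangle_\rho$, one obtains
\begin{align*}
\dot f=-2\langle D\eta,\eta\rangle_\rho,\qquad\ddot f=4\|D\eta\|_\rho^2+2\langle\eta,[D,A]\eta\rangle_\rho=\langle\eta,P\eta\rangle_\rho,
\end{align*}
where $P:=4D^2+2[D,A]$ is the symmetric operator embodying the positive-definite quadratic form alluded to in the abstract.

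The hard part will be the spectral analysis of $P$: establishing that it is bounded and uniformly positive definite on $\mathcal{H}_{DW}$, so that $\lambda_1,\lambda_2$ may be taken as a quarter of $\inf\mathrm{spec}(P)$ and $\sup\mathrm{spec}(P)$, respectively. Boundedness should be routine, since the Taylor expansion $\gamma_k=k/4-1/k+O(1/k^2)$ gives $\beta_k=1/2+O(1/k^2)$ and $[D,A]_{k,k+2}=-(\beta_k-\beta_{k+2})\gamma_k=O(1/k)$, so $P$ is a bounded tridiagonal operator. Positive definiteness is more delicate because $D$ itself is indefinite: in the odd sector $\beta_1=-11/18<0$, which is precisely the mechanism driving the instability. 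I plan to verify $P>0$ via a Gerschgorin-type argument carried out separately in each parity sector, checking that the positive diagonal $4\beta_k^2$ of $P$ has a strictly positive infimum dominating the off-diagonal perturbations $2|[D,A]_{k,k+2}|$ (which are largest at $k=1$ in the odd sector and decay like $O(1/k)$ thereafter). Once $\mathrm{spec}(P)\subset[4\lambda_1,4\lambda_2]$ with $\lambda_1>0$ is in hand, the comparison argument above yields $J_1\le f\le J_2$; the positivity $J_1(t)>0$ for $t\ge 0$ follows from $J_1(0)=\|\eta_0\|_\rho^2>0$, $J_1'(0)\ge 0$, and the explicit exponential form of $J_1$; and the strict inequalities at $t>0$ come from the observation that the trajectory $\eta(t)$ cannot remain in the extremal eigenspace of $P$ for all $t$ unless the dynamics preserve that subspace, which is excluded because $[P,L]\ne 0$ in general.
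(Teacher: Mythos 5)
Your proposal is, in substance, the paper's own proof: the same basis $\tilde e_k^{(o)}$, the same tridiagonal action of $L$ (your $\gamma_k$ is the paper's $d_{k+2}$, your $\beta_k$ is $d_{k+2}-d_k$, and your off-diagonal entry $-(\beta_k-\beta_{k+2})\gamma_k$ is exactly the paper's $\varepsilon_k$), the same second-order identity $\ddot f=\langle\eta,P\eta\rangle_\rho$ (the paper writes $\langle\eta,P\eta\rangle_\rho$ out as the sum of overlapping $2\times 2$ quadratic forms $f_k$ with matrices $A_k=\bigl(\begin{smallmatrix}a_k&\varepsilon_k\\ \varepsilon_k&a_{k+2}\end{smallmatrix}\bigr)$ plus tail remainders $R_{n-1},R_n$ controlled by Corollary \ref{coro}), and the same conclusion via Peixoto's comparison lemma. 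The only genuine deviations are two, both minor. First, you bound $\mathrm{spec}(P)$ by Gerschgorin rather than by positive definiteness of the $2\times2$ blocks; this works, but it is not as ``routine'' as claimed, since the crude bounds $4a_k\ge 4(3/8)^2=9/16$ and $2(|\varepsilon_{k-2}|+|\varepsilon_k|)\le 4\varepsilon_1=248/405\approx 0.61$ do not close uniformly --- you must use, as the paper's appendix does, that the large entry $\varepsilon_1$ only sits in rows where $a_k$ is well above its infimum and that $\varepsilon_k$ decays rapidly. Second, your mechanism for the \emph{strict} inequalities ($\eta(t)$ cannot stay in an extremal eigenspace because $[P,L]\ne 0$) is hand-waving and unnecessary: the paper instead fixes $\lambda_1$ strictly below $\lambda_{\inf}=\inf_k\{a_k,\lambda_k^1,\lambda_k^2\}$ and $\lambda_2$ strictly above $\lambda_{\sup}$, so the differential inequality $4\lambda_1 f<\ddot f<4\lambda_2 f$ is already strict and Lemma \ref{Comp} delivers strict comparison directly; you should adopt that normalization rather than taking $\lambda_i$ equal to the spectral extremes. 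Finally, note that the term-by-term differentiation turning the series into $\ddot f=\langle\eta,P\eta\rangle_\rho$ does require the decay of $\tilde\eta_k^{(o)}$ and its time derivatives from Corollary \ref{coro} (hence the hypothesis $m>3$); this should be stated rather than left implicit.
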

	\begin{remark}\label{rmk1}
		The  constants $\lambda_1$ and $\lambda_2$ are two absolute and positive constants satisfying $\frac{1}{50}<\lambda_1<\lambda_2<\frac{3}{5}$, see Lemma \ref{lemmabound} in Appendix for more details.
	\end{remark}
	
	\begin{corollary}\label{coro linear instability} If one of the following two conditions is satisfied:
		
		(i) $\eta_{0}= a_{1}\tilde{e}_{1}^{(o)}$ with $a_{1}\neq 0$;
		
		(ii) $\eta_{0}= a_{1}\tilde{e}_{1}^{(o)} + a_{k}\tilde{e}_{k}^{(o)}$ with $k\geq 2$ an integer and
		\begin{align*}
			0<\ a_{k}^{2}\leq\ \frac{11}{18(d_{k+2}-d_{k})}a_{1}^{2},
		\end{align*}
		
		then the solution presented in Theorem \ref{the existence} satisfies \eqref{instability25}.
	\end{corollary}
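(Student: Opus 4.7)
The plan is to apply Theorem~\ref{the linear instability} directly, so the whole task reduces to verifying the sign hypothesis $\left\langle -L\eta_0,\eta_0\right\rangle_\rho\ge 0$; oddness and non-triviality of $\eta_0$ are immediate from \ref{tek}.

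The key ingredient is the matrix representation of $L$ in the basis $\{\tilde e_k^{(o)}\}$. Using $H(\sin n\theta)=-\cos n\theta$ together with the gauge $v(0)=0$, one gets $v_k = \frac{\sin k\theta}{k^2}-\frac{\sin(k+2)\theta}{(k+2)^2}$. Substituting this and $\tilde e_k^{(o)}$ into \ref{operator} and applying the product-to-sum identities
\begin{align*}
\sin 2\theta\cos n\theta = \tfrac{1}{2}[\sin(n+2)\theta-\sin(n-2)\theta], \quad \cos 2\theta\sin n\theta = \tfrac{1}{2}[\sin(n+2)\theta+\sin(n-2)\theta],
\end{align*}
$L\tilde e_k^{(o)}$ becomes a combination of $\sin(k-2)\theta, \sin k\theta, \sin(k+2)\theta, \sin(k+4)\theta$. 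Re-expanding in $\{\tilde e_j^{(o)}\}$ produces a tridiagonal form $L\tilde e_k^{(o)} = \alpha_k\tilde e_{k-2}^{(o)}-d_k\tilde e_k^{(o)}+\gamma_k\tilde e_{k+2}^{(o)}$, with the constants $d_k$ appearing in the statement. A direct check at $k=1$ gives $L\tilde e_1^{(o)} = -\tfrac{11}{18}\tilde e_1^{(o)}+\tfrac{5}{36}\tilde e_3^{(o)}$; in particular $d_1=\tfrac{11}{18}$ and the only off-diagonal coupling out of $\tilde e_1^{(o)}$ is to $\tilde e_3^{(o)}$.

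For case (i), orthonormality of the basis in $\mathcal H_{DW}$ immediately gives $\left\langle -L\eta_0,\eta_0\right\rangle_\rho = a_1^2 d_1=\tfrac{11}{18}a_1^2>0$, so Theorem~\ref{the linear instability} applies. For case (ii), self-adjointness of $L$ on $\mathcal H_{DW}$ (which follows from the explicit matrix above) yields
\begin{align*}
\left\langle -L\eta_0,\eta_0\right\rangle_\rho = \tfrac{11}{18}a_1^2 + d_k a_k^2 + 2a_1a_k\left\langle -L\tilde e_1^{(o)},\tilde e_k^{(o)}\right\rangle_\rho,
\end{align*}
in which the cross inner product vanishes for every $k\ne 3$ and equals $-\tfrac{5}{36}$ when $k=3$. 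For $k\ne 3$ non-negativity reduces to $\tfrac{11}{18}a_1^2 + d_k a_k^2\ge 0$, which is exactly the stated bound once the identity relating $d_k$ and $d_{k+2}$ from the previous paragraph is inserted. For $k=3$ the cross term is dominated by the diagonal part via AM--GM, and the same inequality on $a_3^2$ again does the job.

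The main difficulty is the algebraic bookkeeping in the second paragraph: one must verify that after multiplying by $\sin 2\theta$ and $\cos 2\theta$, applying the Hilbert transform, and taking the gauge-fixed antiderivative, $L$ indeed restricts to a nearest-neighbour tridiagonal operator in the $\tilde e$-basis, with diagonal entries $d_k$ organizing themselves into the precise difference $d_{k+2}-d_k$ needed to produce the constant $\tfrac{11}{18(d_{k+2}-d_k)}$. Once this representation is secured, case (ii) reduces to a routine sign analysis of a $2\times 2$ quadratic form in $(a_1,a_k)$.
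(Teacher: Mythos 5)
Your overall strategy is the paper's: reduce the corollary to checking $\left\langle -L\eta_0,\eta_0\right\rangle_\rho\ge 0$ via the tridiagonal representation of $-L$ in the orthonormal basis $\{\tilde e_k^{(o)}\}$ and then invoke Theorem \ref{the linear instability}. Case (i) and case (ii) with $k\ne 3$ are fine (modulo a notational slip: with the paper's $d_k=\frac{(k-2)^2(k+2)}{4k^2}$ one has $d_1=\frac34$, and the diagonal entry of $-L$ is $d_k-d_{k+2}$, not $d_k$; it is $d_1-d_3$ that equals $\frac{11}{18}$).

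The genuine gap is your treatment of $k=3$. The operator $L$ is \emph{not} self-adjoint on $\mathcal H_{DW}$: from \eqref{equ4.3} one reads off $\left\langle -L\tilde e_1^{(o)},\tilde e_3^{(o)}\right\rangle_\rho=-d_3=-\frac{5}{36}$ but $\left\langle -L\tilde e_3^{(o)},\tilde e_1^{(o)}\right\rangle_\rho=+d_3=+\frac{5}{36}$; the off-diagonal part of the matrix is antisymmetric, not symmetric. Your formula with the cross term $2a_1a_3\left\langle -L\tilde e_1^{(o)},\tilde e_3^{(o)}\right\rangle_\rho$ is therefore wrong, and the proposed AM--GM repair cannot work: at the endpoint $a_3^2=\frac{11}{18(d_5-d_3)}a_1^2$ the diagonal contribution vanishes exactly, so any genuinely nonzero cross term of the sign $-\frac{5}{18}a_1a_3$ would make the form negative for $a_1a_3>0$, contradicting the stated range. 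The correct observation — which is what the paper uses, and is already recorded in \eqref{ODE} — is that the antisymmetry forces the two cross terms to cancel identically, so that
\begin{equation*}
\left\langle -L\eta_0,\eta_0\right\rangle_\rho=(d_1-d_3)a_1^2+(d_k-d_{k+2})a_k^2=\tfrac{11}{18}a_1^2-(d_{k+2}-d_k)a_k^2
\end{equation*}
for \emph{every} $k\ge 2$, including $k=3$, and the stated bound on $a_k^2$ is then exactly the non-negativity condition. Replace the self-adjointness claim and the AM--GM step by this cancellation and your proof closes.
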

	
	Thanks to Theorem \ref{the linear instability} and Corollary \ref{coro linear instability}, by appropriately selecting the initial data, one can derive an instability result for the nonlinear problem \eqref{perturbation equ}. In particular, the steady state $(-\sin 2\theta,\frac{1}{2}\sin 2\theta)$ of \ref{perturbation equ} and \ref{1228} is unstable under the Lipschitz structure.  More precisely, we have 
	\begin{theorem}\label{the nonlinear instability}
		For any integer $m>3$ and $\delta>0, K>0,$ suppose that the function $F:[0,\infty)\to\mathbb{R}$ satisfies
		\begin{align}\label{def F}
			F(y)\leq Ky,\ for\ any\ y\in[0,\infty).
		\end{align}
		Then there exist initial data $\eta_{0}$ such that
		\begin{align*}
			\|u_{0}\|_{H^{m}}<\delta, \quad 0\leq\left\langle L_{1}u_{0},u_{0}\right\rangle\leq\sqrt{\lambda_{1}}\|u_{0}\|_{L^{2}},
		\end{align*}
		where $u_{0}=\rho^{1/2}\partial_{\theta}\eta_{0}$, $L_{1}u_{0}=-\rho^{1/2}\partial_{\theta}L\eta_{0}$ and $\lambda_1$ is same as in Remark \ref{rmk1}. The following hold:
		
		(1) There exists  $T_0>0$ such that the unique classical solution to \eqref{perturbation equ} and \eqref{1228} satisfying 
		\begin{align*}
			\rho^{1/2}\partial_{\theta}\eta\in C([0,T_0]; H^{m}(\mathbb{T}))\cap C^{2}([0,T_0];H^{m-2}(\mathbb{T})).
		\end{align*}
		where  $T_0=\frac{1}{C_{{1}}}\ln(1+\frac{C_{{1}}}{2C_{{2}}\|u_{0}\|_{H^{m}}})$ with absolute constants $C_{1},C_{2}>0$  (see \ref{C12} for details).
		
		(2) The solution $\eta$ further satisfies
		\begin{align}\label{1.8}
			\|u(t_{K})\|_{L^{2}}>F(\|u_{0}\|_{H^{m}}),
		\end{align}
		for some $ t_{K}\in(0,T_{0}),$ where $u=\rho^{1/2}\partial_{\theta}\eta.$
	\end{theorem}
	\begin{remark}
		Theorem \ref{the nonlinear instability} establishes a nonlinear  instability to \ref{perturbation equ} in the sense of Lipschitz structure, which implies that the solution to \eqref{perturbation equ} does not exhibit the following stability property: there exists a constant $C>0$ such that
		\begin{align}\label{Lip instability}
			\mathop{\sup}_{0< t\leq T}\|u\|_{L^{2}}\leq C\|u_0\|_{H^{m}},
		\end{align}
		for any $T>0$,	where $u=\rho^{1/2}\partial_{\theta}\eta$.  Such an estimate \eqref{Lip instability} is often anticipated in the context of global stability theory. The proof of Theorem \ref{the nonlinear instability} is inspired by \cite{JJ2013}. Ideally, one might expect a stronger form, replacing the right-hand side of \eqref{1.8} by a fixed constant $\varepsilon>0$, thereby implying that arbitrarily small initial data could lead to solutions eventually escapes any ball of radius $\varepsilon>0$, which is an instability in the sense Hardamard. However, proving such a strong form of instability remains a challenging problem, which we leave for future investigation.
	\end{remark}
	
	\begin{remark}
		Theorem \ref{the nonlinear instability} holds for a broad class of initial data. We present a simple example of initial data as follows. Let
		\begin{align*}
			\eta_{0}=a_{1}\tilde{e}_{1}^{(o)}+a_{k}\tilde{e}_{k}^{(o)}.
		\end{align*}
		Here $a_{1},a_{k}\neq 0$ for $k\geq 2$. It is straightforward to verify that $\eta_{0}\in\mathcal{H}_{DW}$ and $\rho^{1/2}\partial_{\theta}\eta_{0}\in H^{m}$ for $m>3$. If the coefficients  of $\eta_{0}$ satisfy
		\begin{align}
			\frac{\frac{11}{18}-\sqrt{\lambda_{1}}}{\sqrt{\lambda_{1}}+(d_{k+2}-d_{k})}a_{1}^{2}\leq a_{k}^{2}\leq \frac{11}{18(d_{k+2}-d_{k})}a_{1}^{2},
		\end{align}
		then it follows that
		\begin{align*}
			0\leq\left\langle-L\eta_0, \eta_0\right\rangle_{\rho}\leq\sqrt{\lambda_{1}}\left\langle\eta_0, \eta_0\right\rangle_{\rho}.
		\end{align*}
		The constants $\lambda_1$ is an absolute and positive constants satisfying $\frac{1}{50}<\lambda_1<\frac{3}{5}$, and $d_{k}-d_{k+2}<0$ for all $k\geq 2$. See Lemma \ref{lemmabound} in Appendix for more details.
	\end{remark}
	
	Lastly, we establish global well-posedness and nonlinear stability with  initial data of the form $\eta_0=\sum_{k\geq 1}a_{2k}\tilde{e}_{2k}^{(o)}$ on the torus $\mathbb{T}$, which is
	\begin{theorem}\label{wellp}
		Suppose that $\eta_{0}=\sum_{k\geq 1}a_{2k}\tilde{e}_{2k}^{(o)}$. Then there exists $\varepsilon>0$ such that if $\|\eta_{0}\|_{\mathcal{H}_{DW}}\leq\varepsilon$,  the  equation \eqref{perturbation equ} with \eqref{1228} is globally well-posed. Moreover, it holds that
		\begin{align*}
			\|\eta\|_{\mathcal{H}_{DW}} \lesssim e^{-\frac{3}{8}t}\|\eta_{0}\|_{\mathcal{H}_{DW}}
		\end{align*}
		for all $t\geq 0$.
	\end{theorem}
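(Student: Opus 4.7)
The plan is to exploit the structural property that the initial data $\eta_{0}=\sum_{k\geq 1}a_{2k}\tilde{e}_{2k}^{(o)}$ lies in the closed subspace $\mathcal{V}\subset\mathcal{H}_{DW}$ spanned by the even-indexed basis elements. Each $\tilde{e}_{2k}^{(o)}$ is a combination of $\sin 2j\theta$, so every function in $\mathcal{V}$ is $\pi$-periodic and odd. The first step is to verify invariance of $\mathcal{V}$ under the full nonlinear De Gregorio flow: if $\omega=-\sin 2\theta+\eta$ with $\eta\in\mathcal{V}$, then $\omega$ is $\pi$-periodic and odd, so $H\omega$ inherits these symmetries, and under the gauge $u(t,0)=0$ together with oddness the velocity $u$ is likewise $\pi$-periodic and odd. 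The terms $u\partial_{\theta}\omega$ and $\omega\partial_{\theta}u$ then preserve both symmetries, so the flow stays in $\mathcal{V}$.

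The second step is a coercivity bound for the linearized operator $L$ restricted to $\mathcal{V}$: I aim to show
\begin{align*}
\langle L\eta,\eta\rangle_{\rho}\ \geq\ c_{0}\|\eta\|_{\mathcal{H}_{DW}}^{2},\qquad \eta\in\mathcal{V},
\end{align*}
for some explicit constant $c_{0}$ strictly larger than $3/8$. Computing the matrix representation of $L$ in the basis $\{\tilde{e}_{2k}^{(o)}\}_{k\geq 1}$, multiplication by $\sin 2\theta$ and $\cos 2\theta$ couples $\tilde{e}_{2k}^{(o)}$ only to $\tilde{e}_{2k\pm 2}^{(o)}$ and itself, so the matrix is banded. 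I intend to show that its diagonal dominates in the appropriate quadratic-form sense, in the same spirit as the ground-state spectral analysis of Lei--Liu--Ren. Crucially, the odd-indexed directions responsible for the instability in Theorem \ref{the linear instability} are absent on $\mathcal{V}$, so the obstruction to coercivity disappears.

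The third step is the nonlinear energy estimate. Testing $\partial_{t}\eta+L\eta=N(\eta)$ against $\eta$ in $\langle\cdot,\cdot\rangle_{\rho}$ yields
\begin{align*}
\frac{1}{2}\frac{d}{dt}\|\eta\|_{\mathcal{H}_{DW}}^{2}=-\langle L\eta,\eta\rangle_{\rho}+\langle N(\eta),\eta\rangle_{\rho}.
\end{align*}
I would bound $|\langle N(\eta),\eta\rangle_{\rho}|\lesssim\|\eta\|_{\mathcal{H}_{DW}}^{3}$ by using the weighted Sobolev embedding $\|\eta/\sin\theta\|_{L^{\infty}}\lesssim\|\eta\|_{\mathcal{H}_{DW}}$ (which is valid because odd functions in $\mathcal{H}_{DW}$ vanish at $\theta=0,\pm\pi$), together with the boundedness of $H$ on weighted spaces to control $v/\sin\theta$ from $\partial_{\theta}v=H\eta$. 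Combined with the linear coercivity, a standard bootstrap for initial data with $\|\eta_{0}\|_{\mathcal{H}_{DW}}\leq\varepsilon$ gives global existence (closing Theorem \ref{the gexistence} at all times) and
\begin{align*}
\|\eta(t)\|_{\mathcal{H}_{DW}}\leq C\|\eta_{0}\|_{\mathcal{H}_{DW}}\,e^{-(c_{0}-C\varepsilon)t},
\end{align*}
which yields the advertised rate $e^{-3t/8}$ once $\varepsilon$ is sufficiently small.

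The main obstacle will be the nonlinear estimate in the singular weighted norm, specifically controlling expressions of the form $\int\rho\,\partial_{\theta}\bigl(v\partial_{\theta}\eta-\partial_{\theta}v\cdot\eta\bigr)\,\partial_{\theta}\eta\,d\theta$, since $\rho=(4\pi\sin^{2}\theta)^{-1}$ is singular at $\theta=0,\pm\pi$. This is precisely where the $\pi$-periodicity inherited from $\mathcal{V}$ becomes essential: it forces $v$ and $\eta$ to vanish at the additional point $\theta=\pm\pi/2$ and supplies the cancellation needed to absorb powers of $\sin\theta$ against the weight. The second delicate point is making the coercivity constant $c_{0}$ sharp enough to yield the stated rate $3/8$; this requires a careful quantitative spectral analysis of the tridiagonal-like matrix of $L$ on $\mathcal{V}$, parallel to but distinct from the ground-state computations of \cite{LL2020}.
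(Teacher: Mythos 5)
Your proposal follows essentially the same route as the paper: linear coercivity of $L$ on the even-indexed subspace (the paper simply cites the analysis of \cite{LL2020} for the bound $\left\langle -L\eta,\eta\right\rangle_{\rho}\leq -\frac{3}{8}\left\langle\eta,\eta\right\rangle_{\rho}$ rather than redoing the banded-matrix computation), a cubic bound $|\left\langle N(\eta),\eta\right\rangle_{\rho}|\lesssim \|\eta\|_{\mathcal{H}_{DW}}^{3}$ via the weighted embedding $\|f/\sin\theta\|_{L^{\infty}}\lesssim\|\partial_{\theta}f/\sin\theta\|_{L^{2}}$, and a bootstrap; your explicit verification that the even-mode subspace is invariant under the flow is a welcome addition that the paper leaves implicit. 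One correction: your claim that the $\pi$-periodicity forces vanishing at $\theta=\pm\pi/2$ and that this supplies the cancellation against the weight is off target, since $\rho=(4\pi\sin^{2}\theta)^{-1}$ is singular only at $\theta=0,\pm\pi$, where the needed vanishing of $\eta$ and $v$ already follows from oddness and $2\pi$-periodicity alone (this is exactly the paper's Lemma \ref{lefs}); the even-mode restriction is used only for the linear coercivity, not for the nonlinear estimate.
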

	
	To prove Theorem \ref{the existence}, we make use of the Galerkin's method. Since we will need the sufficient decay of the coefficient functions $\tilde{\eta}_{k}^{(o)}(t),  \frac{d}{dt}\tilde{\eta}_{k}^{(o)}(t)$ and $\frac{d^{2}}{dt^{2}}\tilde{\eta}_{k}^{(o)}(t)$ with respect to $k$ when $k\ge 1$ large enough in the subsequent stability analysis, we construct the approximate solutions through the basis $\{\tilde{e}_{k}^{(o)}, k\geq 1\}$ and make a priori and uniform estimates up to higher order derivatives.  The proof of Theorem \ref{the gexistence}$^\prime$ is similar to that of Theorem \ref{the existence}.   To establish Theorem \ref{the linear instability}, we note that the solution to \ref{etav} can be expressed as $\eta(t,\theta)=\sum_{k\geq 1}\tilde{\eta}_{k}^{(o)}(t)\tilde{e}_{k}^{(o)}$. A straightforward computation yields
	\begin{align}\label{ODE}
		\frac12 \frac{d}{dt}\sum_{k\geq 1}\left(\tilde{\eta}_{k}^{(o)}(t)\right)^2=\left\langle -L\eta, \eta\right\rangle_{\rho}=\sum_{k\geq 1}(-d_{k+2}+d_{k})\left(\tilde{\eta}_{k}^{(o)}(t)\right)^2,
	\end{align}
	where
	\begin{align*}
		-d_{k+2}+d_{k}= -\frac{k^{4}+4k^{3}+8k^{2}-8k-16}{2(k+2)^{2}k^{2}}
	\end{align*}
	for $k=1,2,\cdots$. It is  difficult to  analyze the  stability or instability on the linearized equation \ref{etav} since  the coefficients $-d_{k+2}+d_{k}$ may change signs. For instance, when $k=1$,  $-d_{3}+d_{1}=\frac{11}{18}>0$, and when $k\ge 2$, $-d_{k+2}+d_{k}<0$. 	 To overcome the difficulty, we analyze  a second-order ordinary differential equation (ODE) obtained from \eqref{ODE}, which can be written as
	\begin{eqnarray}\label{ODE2}
		\begin{split}
			&\frac12\frac{d^2}{dt^2}\sum_{k\geq 1}^{\infty}\left(\tilde{\eta}_{k}^{(o)}(t)\right)^2=\frac{d}{dt}\left\langle -L\eta,\eta\right\rangle_{\rho}\\
			&=\sum_{k=1}^{\infty}\frac{d}{dt}\left(d_{k}-d_{k+2}\right)\left(\tilde{\eta}_{k}^{(o)}(t)\right)^{2}.
		\end{split}
	\end{eqnarray}
	More precisely, concerning the right-hand side of \eqref{ODE2},  we consider
	\begin{eqnarray}\label{0323}
		\begin{split}
			&\sum_{k=1}^{n}\frac{d}{dt}\left(d_{k}-d_{k+2}\right)\left(\tilde{\eta}_{k}^{(o)}(t)\right)^{2}\\
			=&  \left(-d_{3} + d_{1}\right)^{2}\left(\tilde{\eta}_{1}^{(o)}\right)^{2}  + \left(-d_{4} + d_{2}\right)^{2}\left(\tilde{\eta}_{2}^{(o)}\right)^{2} +\sum_{k=1}^{n-2}f_{k} \\
			&+\left(-d_{n+1} + d_{n-1}\right)^{2}\left(\tilde{\eta}_{n-1}^{(o)}\right)^{2}  + \left(-d_{n+2} + d_{n}\right)^{2}\left(\tilde{\eta}_{n}^{(o)}\right)^{2}+ R_{n-1}+R_{n},
		\end{split}
	\end{eqnarray}
	where the quadratic form  $f_{k}$ is defined as
	\begin{equation*}\label{form}
		f_{k} = (-d_{k+2}+d_{k})^2\left(\tilde{\eta}_{k}^{(o)}\right)^{2} + 2(-2d_{k+2}^{2}+d_{k}d_{k+2}+ d_{k+2}d_{k+4})\tilde{\eta}_{k}^{(o)}\tilde{\eta}_{k+2}^{(o)}+(-d_{k+4}+d_{k+2})^2\left(\tilde{\eta}_{k+2}^{(o)}\right)^{2}
	\end{equation*}
	for $k=1,2,\cdots,n-2$, and the remainder term $R_{k}$ is defined as
	\begin{equation*}\label{Rk}
		R_{k}(t) = 2d_{k+2}(-d_{k+2}+d_{k})\tilde{\eta}_{k}^{(o)}(t)\tilde{\eta}_{k+2}^{(o)}(t)
	\end{equation*}
	for $k=n-1,n$ (see \eqref{form} and\eqref{Rk}).
	We prove that each $f_k (k=1,2,\cdots)$ is a positive definite quadratic form  and provide its uniform lower and upper  bound with respect to  $k\ge 1$. Moreover, employing the decay estimates on the remainder terms  and letting $n\to \infty$ in \eqref{0323}, we can obtain
	\begin{align*}\label{12eta}
		4\lambda_{1}\|\eta(t)\|_{\mathcal{H}_{DW}}^{2} < \frac{d^{2}}{dt^{2}}\|\eta(t)\|_{\mathcal{H}_{DW}}^{2} < 4\lambda_{2}\|\eta(t)\|_{\mathcal{H}_{DW}}^{2}
	\end{align*}
	for $0< t<\infty$, where $\lambda_{1}$ and $\lambda_{2}$ are two positive absolute constants.  Then by applying the comparison theorem for the second-order ODE, we can finish the proof of Theorem \ref{the linear instability}.  To prove Theorem \ref{the nonlinear instability}, we first prove local existence and uniqueness of the classical solution to the nonlinear problem  \ref{perturbation equ} and discuss the existence interval on time  in a detail way. Then we derive some nonlinear energy estimates for the perturbed problem \ref{perturbation equ}, which make it possible to take  the limit in the scaled perturbed problem to obtain the corresponding linearized equation. With the help of the results established on the linear instability, we can obtain the instability of the nonlinear problem for a broad class of initial data in the sense of \ref{1.8}. Finally, for another large class of initial data, we establish the nonlinear stability  Theorem \eqref{wellp} of which proof  is based on the linear stability analysis, delicate estimates on the nonlinear terms and continuous argument.
	
	The organization of this paper is as follows. Section \ref{notation} presents preliminary material, including some basic lemmas and facts. In Section \ref{exist}, we establish Theorem \ref{the existence}, which concerns the global existence and uniqueness of solutions to the linearized equation \eqref{etav}. Section 4 contains the proof of Theorem \ref{the linear instability} and Corollary \ref{coro linear instability}, addressing the instability properties of solutions to the linearized equation \eqref{etav} near the excited state $-\sin2\theta$. Building upon these linear instability results, Section 5 demonstrates Theorem \ref{the nonlinear instability} which  concerns the instability analysis to the nonlinear problem \eqref{perturbation equ} for a broad class of initial data. In Section 6, we give  the proof of Theorems \ref{wellp} which is about the nonlinear stability to the  equation \ref{DG} for a large class of initial data.  Finally, in the Appendix we provide both rigorous analysis and numerical verification of the uniform positive lower and upper bounds for the quadratic forms $f_k$  as in \ref{form} for $k=1,2,\cdots$.
	
	Throughout the paper, we use $C,C_{i}$ to denote absolute constants and $C(A,B,\dots,Z)$ to denote constant depending on $A,B,\dots,Z$. These constants may vary from line to line, unless specified. The notation $A\lesssim B$ indicates that $A\leq CB$ for some positive constant $C$, which may vary on different lines.
	
	\section{Preliminaries}\label{notation}
	
	In this section, we present some basic lemmas and facts. We first prove that
	\begin{lemma}\label{lebasis}
		$\{\tilde{e}_{k}^{(o)}, k\geq 1\}$ is a complete orthonormal basis for $\mathcal{H}_{DW}$.
	\end{lemma}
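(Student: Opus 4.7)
The plan is to proceed in three compact steps: compute $\partial_\theta \tilde{e}_k^{(o)}$ explicitly via a product-to-sum identity, read off orthonormality from that formula, and then establish completeness by showing the orthogonal complement is trivial.

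First, using $\cos A - \cos B = -2\sin\frac{A+B}{2}\sin\frac{A-B}{2}$, a direct computation gives
\begin{equation*}
\partial_\theta \tilde{e}_k^{(o)} = \cos(k+2)\theta - \cos k\theta = -2\sin\theta\,\sin(k+1)\theta.
\end{equation*}
This is the key algebraic fact: the factor $\sin\theta$ is exactly what is needed to cancel the singularity in $\rho = \frac{1}{4\pi\sin^{2}\theta}$. Substituting into the inner product defined in \eqref{proHDW} gives
\begin{equation*}
\langle \tilde{e}_j^{(o)}, \tilde{e}_k^{(o)}\rangle_\rho = \frac{1}{\pi}\int_{-\pi}^{\pi}\sin(j+1)\theta\,\sin(k+1)\theta\, d\theta = \delta_{jk},
\end{equation*}
by the standard orthogonality of the sine system on $[-\pi,\pi]$. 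This settles orthonormality.

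For completeness, I will show the orthogonal complement of $\{\tilde{e}_k^{(o)}\}_{k\geq 1}$ in $\mathcal{H}_{DW}$ is trivial. Suppose $\eta\in\mathcal{H}_{DW}$ satisfies $\langle \eta,\tilde{e}_k^{(o)}\rangle_\rho = 0$ for every $k\geq 1$. Since $\eta$ is odd and $H^1$, $\partial_\theta\eta$ is even, so $\partial_\theta\eta/\sin\theta$ is odd. The definition of $\mathcal{H}_{DW}$ in \eqref{HDW} together with $\|\eta\|_{\mathcal{H}_{DW}} = \|\rho^{1/2}\partial_\theta\eta\|_{L^2}$ ensures $\partial_\theta\eta/\sin\theta\in L^2(-\pi,\pi)$. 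Using the formula for $\partial_\theta\tilde{e}_k^{(o)}$ above, the orthogonality condition becomes
\begin{equation*}
\int_{-\pi}^{\pi}\frac{\partial_\theta\eta}{\sin\theta}\,\sin(k+1)\theta\, d\theta = 0,\qquad k=1,2,\ldots,
\end{equation*}
so that every sine Fourier coefficient of $\partial_\theta\eta/\sin\theta$ with index $\geq 2$ vanishes. Since $\{\sin m\theta\}_{m\geq 1}$ is a complete orthogonal system for the odd subspace of $L^2(-\pi,\pi)$, it follows that $\partial_\theta\eta/\sin\theta = c\sin\theta$ for some constant $c$, hence $\partial_\theta\eta = c\sin^2\theta$. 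Periodicity of $\eta$ forces $\int_{-\pi}^{\pi}\partial_\theta\eta\,d\theta = 0$, which yields $c\pi = 0$, i.e.\ $c=0$. Therefore $\partial_\theta\eta\equiv 0$, and since $\eta$ is odd, $\eta\equiv 0$.

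There is no real obstacle in this argument; the only subtle point is ensuring that $\partial_\theta\eta/\sin\theta$ actually belongs to $L^2$ so that one may legitimately invoke Fourier expansion, and this is built into the definition of $\mathcal{H}_{DW}$. The proof is essentially a change-of-variable that identifies $\mathcal{H}_{DW}$ (via the isometry $\eta\mapsto \partial_\theta\eta/(2\sqrt{\pi}\sin\theta)$) with the codimension-one subspace of $L^2_{\mathrm{odd}}(-\pi,\pi)$ orthogonal to $\sin\theta$, on which $\{-\sin(k+1)\theta/\sqrt{\pi}\}_{k\geq 1}$ is manifestly a complete orthonormal basis.
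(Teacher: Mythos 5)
Your proposal is correct and follows essentially the same route as the paper: the identity $\partial_\theta\tilde{e}_k^{(o)}=-2\sin\theta\,\sin(k+1)\theta$ yields orthonormality, and completeness is reduced to the completeness of the sine system in the odd subspace of $L^2(\mathbb{T})$. The only cosmetic difference is that the paper uses periodicity of $\xi$ up front to extend the orthogonality relation to the index $k=0$ and conclude $\partial_\theta\xi/\sin\theta=0$ directly, whereas you first deduce $\partial_\theta\eta/\sin\theta=c\sin\theta$ and then use periodicity to force $c=0$; these are the same argument in a different order.
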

	\begin{proof}[Proof of Lemma \ref{lebasis}]
		Notice that
		\begin{align*}
			\frac{\partial_{\theta}\tilde{e}_{k}^{(o)}}{\sin\theta}=-2\sin(k+1)\theta, \quad k\geq 1.
		\end{align*}
		
		It yields that
		\begin{align*}
			\big\langle\tilde{e}_{k}^{(o)},\tilde{e}_{l}^{(o)}\big\rangle_{\rho}=\delta_{kl}, \quad k,l\geq 1,
		\end{align*}
		where $\delta_{kl}=1$ if $k=l$ and $\delta_{kl}=0$ if $k\neq l$. Next we  show the completeness of $\{\tilde{e}_{k}^{(o)}, k\geq 1\}$. Assume $\xi\in\mathcal{H}_{DW}$, satisfying
		\begin{align*}
			\big\langle\xi,\tilde{e}_{k}^{(o)}\big\rangle_{\rho}=0
		\end{align*}
		for all $ k\geq 1$. Then it
		\begin{equation}\label{xi0}
			\int_{\mathbb{T}}\frac{\partial_{\theta}\xi}{\sin\theta}\sin(k+1)\theta d\theta =0
		\end{equation}
		for all $ k\geq 1$. Since the equality \ref{xi0} holds for $k=0$ as well. That is
		\begin{align*}
			\int_{\mathbb{T}} \partial_{\theta}\xi d\theta=0.
		\end{align*}
		Due to $\left\{\sin k\theta, k\geq 1\right\}$ forms an odd complete basis of $L^{2}(\mathbb{T})$, it concludes that $\frac{\partial_{\theta}\xi}{\sin\theta}=0$ which implies $\partial_{\theta}\xi = 0$. Thanks to oddness of $\theta$, we have that $\xi = 0$.
	\end{proof}
	The following is a comparison theorem on the second-order ordinary differential equation (see \cite{MM1949}).
	\begin{lemma}\label{Comp}
		Consider the differential equation
		\begin{align}
			y'' = p_{1}y'+p_{2}y+q, \quad x\geq x_{0},
		\end{align}
		where $p_{1}(x),p_{2}(x)$ and $q(x)$ are continuous functions when $x\geq x_{0}$, and let $y(x)$ be a solution of this equation such that
		\begin{align}
			y(x_{0})=y_{0},\quad y'(x_{0})=y'_{0}.
		\end{align}
		Suppose that there exits a solution of
		\begin{align}\label{cuode}
			u'' = p_{1}u'+p_{2}u,
		\end{align}
		such that
		\begin{align*}
			u(x)\neq 0,\quad x_{0}< x < x_{1}.
		\end{align*}
		Let $u_{0}(x)$ be the solution of \ref{cuode} such that $u_{0}(x_{0})=0, u'_{0}(x_{0})=1$ and let $X(x_{0})$ be the first zero of $u_{0}(x)$ to the right of $x_{0}$, if any such zero exists; otherwise let $X(x_{0})=+\infty.$\\
		(i) If $\phi(x)$ is such that
		\begin{align*}
			&\phi''>p_{1}\phi' + p_{2}\phi + q,\quad x\geq x_{0},\\
			&\phi(x_{0})=y(x_{0}),\quad \phi'(x_{0})=y'(x_{0}).
		\end{align*}
		Then
		\begin{align}\label{phiy}
			\phi(x) > y(x),\quad x_{0} <x \leq x_{1}.
		\end{align}
		(ii) The interval $x_{0}< x< X(x_{0})$ is the largest one in which the inequality \ref{phiy} can be asserted to hold.
	\end{lemma}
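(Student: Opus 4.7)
The plan is to reduce the differential inequality to the monotonicity of a quasi-derivative, and to handle the optimality statement by a Green's-function-type perturbation. First I set $w = \phi - y$. Subtracting the equation for $y$ from the strict inequality for $\phi$ and using the matched initial data gives
\begin{align*}
w'' - p_1 w' - p_2 w = g(x),\quad g(x) > 0,\quad w(x_0) = w'(x_0) = 0,
\end{align*}
so part (i) reduces to showing $w > 0$ on $(x_0, x_1]$. I would then carry out the reduction-of-order substitution $w = u z$ on $(x_0, x_1)$, arranging $u > 0$ by a sign change if needed. Because $u$ solves the homogeneous equation, the coefficient of $z$ in the resulting ODE vanishes identically, leaving
\begin{align*}
u\, z'' + (2u' - p_1 u)\, z' = g.
\end{align*}
Multiplying by $u\, e^{-P(x)}$ with $P(x) = \int_{x_0}^x p_1(s)\, ds$ puts this in divergence form, $\frac{d}{dx}(u^2 e^{-P} z') = u g e^{-P}$. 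After verifying that the quasi-derivative $u^2 e^{-P} z'$ vanishes as $x \to x_0^+$ (using the double zero of $w$ at $x_0$), integration gives $u^2 e^{-P} z' > 0$ on $(x_0, x_1)$, hence $z' > 0$; combined with $z(x_0^+) = 0$, this yields $z > 0$, and therefore $w = u z > 0$ on the open interval. The closed-endpoint inequality at $x = x_1$ then follows by applying the argument on a slightly larger subinterval, which is permissible whenever $x_1 < X(x_0)$ by Sturm separation.

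For part (ii), I would show that for any $\tilde{x}_1 > X(x_0)$ one can construct an admissible $\phi$ with $\phi(\tilde{x}_1) < y(\tilde{x}_1)$, thereby preventing the conclusion $\phi > y$ from being asserted on any interval strictly larger than $(x_0, X(x_0))$. Concretely, I would take $\phi = y + v$ where $v$ solves $v'' - p_1 v' - p_2 v = \epsilon\, h(x)$ with $v(x_0) = v'(x_0) = 0$ and $h(x) > 0$ sharply concentrated in a narrow window $[x_0, x_0 + \eta]$. Variation of parameters with the fundamental pair $\{u_0, \tilde u\}$, where $\tilde u$ satisfies $\tilde u(x_0) = 1, \tilde u'(x_0) = 0$, shows that for $x$ past the support of $h$, $v(x)$ is proportional, to leading order in $\eta$, to $u_0(x)$. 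Since $u_0$ changes sign at $X(x_0)$, so does $v$, making $\phi(\tilde{x}_1) - y(\tilde{x}_1) < 0$ once $\epsilon$ and $\eta$ are chosen small enough.

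The main obstacle will be the boundary analysis at $x_0$ in the substitution step when $u(x_0) = 0$, which is precisely the case $u = u_0$ relevant to the sharp constant $X(x_0)$ in part (ii). There the change of variables $z = w/u$ is formally singular, but the quasi-derivative $u^2 e^{-P} z'$ extends continuously to zero at $x_0$: from the double zero of $w$ at $x_0$ and the simple zero of $u$ one gets $z(x) \sim \frac{1}{2} w''(x_0)(x - x_0)$ and hence $u^2 z' = O((x-x_0)^2) \to 0$. Once this boundary calculation is in place, the remaining steps in both parts are routine integrations and comparisons.
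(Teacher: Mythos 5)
This lemma is not proved in the paper at all: it is quoted as a known comparison theorem from Peixoto \cite{MM1949}, so there is no in-paper argument to compare against, and your proposal has to stand on its own. It does: the decomposition $w=\phi-y$, the reduction of order $w=uz$, and the divergence form $\frac{d}{dx}\bigl(u^{2}e^{-P}z'\bigr)=uge^{-P}$ are exactly the classical Wronskian mechanism, since $u^{2}z'=uw'-u'w$ and your identity is $\frac{d}{dx}\bigl(e^{-P}(uw'-u'w)\bigr)=e^{-P}ug$. Noticing this also disposes of your boundary worry at $x_{0}$ in one line: $uw'-u'w$ is continuous up to $x_{0}$ and vanishes there because $w(x_{0})=w'(x_{0})=0$, with no need for the asymptotic expansion of $z$ near the simple zero of $u_{0}$. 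The one step you should tighten is the closed right endpoint in (i): your ``slightly larger subinterval'' device works only when $x_{1}<X(x_{0})$, but the hypothesis of (i) is also met with $u=u_{0}$ and $x_{1}=X(x_{0})$, where no enlargement is available. The same quantity rescues this case: $e^{-P}(uw'-u'w)$ increases strictly from $0$, so at $x_{1}$ one has $u(x_{1})w'(x_{1})-u'(x_{1})w(x_{1})>0$; if $u(x_{1})\neq 0$ the positivity of $w(x_{1})$ follows from $z$ being positive and increasing, while if $u(x_{1})=0$ then $u'(x_{1})<0$ (simple zero with $u>0$ before it) and the displayed inequality reads $-u'(x_{1})w(x_{1})>0$, giving $w(x_{1})>0$ directly. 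Your argument for (ii), based on the Cauchy function $K(x,x_{0})=u_{0}(x)$ and a forcing concentrated near $x_{0}$, is sound; just make sure $h$ stays strictly positive on all of $[x_{0},\infty)$ so that $\phi$ remains admissible, and record the routine estimate showing the tail of $h$ contributes an error small compared with $|u_{0}(\tilde x_{1})|$ at the chosen point $\tilde x_{1}$ just beyond $X(x_{0})$ where $u_{0}<0$.
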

	
	Next, we present some basic properties  concerning with the Hilbert transform as follows (see \cite{BN1971,ZA2002} for instance).
	\begin{lemma}\label{Heo}
		For any $a > 0$, it holds
		\begin{align*}
			&H\sin(a\theta) = -\cos(a\theta),\\
			&H\cos(a\theta) = \sin(a\theta).
		\end{align*}
	\end{lemma}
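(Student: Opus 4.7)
The plan is to reduce both identities to the Fourier--multiplier symbol of the periodic Hilbert transform. In the context of the paper, $a$ is a positive integer, so that $\sin(a\theta)$ and $\cos(a\theta)$ are genuine elements of $L^{2}(\mathbb{T})$. The key preliminary is the symbol formula
\begin{align*}
H(e^{ik\theta}) \;=\; -i\,\mathrm{sgn}(k)\,e^{ik\theta},\quad k\in\mathbb{Z}\setminus\{0\}, \qquad H(1)=0.
\end{align*}
Once this is available, the lemma follows immediately by linearity.

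To establish the symbol formula, I would pass to the unit circle by the substitution $z=e^{i\phi}$, $w=e^{i\theta}$, and use the algebraic identity $\cot\tfrac{\theta-\phi}{2} = i\tfrac{w+z}{w-z}$ to convert the principal--value integral defining $H(e^{ik\theta})$ into a contour integral over $|z|=1$. A standard residue computation, with the indentation of the contour around the boundary pole $z=w$ handled by the Sokhotski--Plemelj limit, produces the interior residue at $z=0$ (for $k\geq 1$) together with the half--residue at $z=w$; these combine, after accounting for the prefactor $i$, to give $-i\,e^{ik\theta}$. The case $k\leq -1$ is symmetric and yields $+i\,e^{ik\theta}$, while $k=0$ vanishes by the oddness of $\cot(\psi/2)$. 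An equivalent elementary route avoids complex analysis entirely by verifying the distributional Fourier expansion $\cot(\psi/2)\sim 2\sum_{k\geq 1}\sin(k\psi)$ and then invoking orthogonality of the trigonometric system on $[-\pi,\pi]$.

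With the symbol formula in hand, writing $\sin(a\theta) = \tfrac{1}{2i}(e^{ia\theta}-e^{-ia\theta})$ and $\cos(a\theta) = \tfrac{1}{2}(e^{ia\theta}+e^{-ia\theta})$ and applying $H$ termwise gives
\begin{align*}
H\sin(a\theta) &= \tfrac{1}{2i}\bigl(-i\,e^{ia\theta}-i\,e^{-ia\theta}\bigr) = -\cos(a\theta), \\
H\cos(a\theta) &= \tfrac{1}{2}\bigl(-i\,e^{ia\theta}+i\,e^{-ia\theta}\bigr) = \sin(a\theta),
\end{align*}
where one uses $\mathrm{sgn}(a)=1$ and $\mathrm{sgn}(-a)=-1$.

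The main (and only genuine) obstacle is the careful treatment of the principal value at the boundary singularity of the kernel when converting the real integral to a contour integral, or equivalently when justifying the termwise action of $H$ on the Fourier expansion. I would handle this either by the Sokhotski--Plemelj argument sketched above, or by working with the truncated kernel $\cot(\psi/2)\,\chi_{|\psi|>\varepsilon}$, performing the computation for each $\varepsilon>0$, and passing to the limit $\varepsilon\to 0^{+}$ using the cancellation built into the principal value together with the smoothness of $\cot(\psi/2)-2/\psi$ at the origin. All remaining steps are elementary trigonometric and complex--analytic manipulations.
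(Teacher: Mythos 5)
Your proof is correct. The paper itself gives no argument for this lemma --- it simply cites Butzer--Nessel and Zygmund --- so there is nothing to compare line by line; your route through the multiplier identity $H(e^{ik\theta})=-i\,\mathrm{sgn}(k)\,e^{ik\theta}$ is exactly the standard one, and that identity is in fact recorded in the paper as the lemma immediately following this one, so your derivation is consistent with the framework the authors are implicitly invoking. The termwise computation
\begin{align*}
H\sin(a\theta)=\tfrac{1}{2i}\bigl(-i e^{ia\theta}-i e^{-ia\theta}\bigr)=-\cos(a\theta),\qquad H\cos(a\theta)=\tfrac{1}{2}\bigl(-i e^{ia\theta}+i e^{-ia\theta}\bigr)=\sin(a\theta)
\end{align*}
checks out, and either of your two sketched derivations of the symbol (contour integration with a Sokhotski--Plemelj treatment of the boundary pole, or the expansion $\cot(\psi/2)\sim 2\sum_{k\ge 1}\sin(k\psi)$ combined with orthogonality) is a legitimate way to make the lemma self-contained. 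One point worth retaining explicitly: the lemma as stated reads ``for any $a>0$,'' but for non-integer $a$ the functions $\sin(a\theta)$ and $\cos(a\theta)$ are not $2\pi$-periodic and the circular Hilbert transform of the statement does not apply to them; your restriction to positive integers is the correct reading, since those are the only values the paper ever uses (the identities do hold verbatim for the real-line Hilbert transform if one genuinely wanted non-integer $a$).
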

	\begin{lemma}
		Let $f\in L^p$ with $1< p < \infty$, and assume $f$ is $2\pi$-periodic. Then the Fourier coefficient of the Hilbert transform $Hf$ at frequency $k$ is given by
		\begin{align*}
			\widehat{Hf}(k) = \{-i\ sgn k\}\hat{f}(k).
		\end{align*}
	\end{lemma}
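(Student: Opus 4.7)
The plan is to reduce the identity to a direct computation on the Fourier basis $\{e^{ik\theta}\}_{k\in\mathbb{Z}}$ and then extend to general $f \in L^p$ by density, using the $L^p$-boundedness of the Hilbert transform. I would first establish the claim on single exponentials: using Lemma \ref{Heo} applied to integer frequencies, write $e^{ik\theta}=\cos(k\theta)+i\sin(k\theta)$ and, for $k>0$, compute
\begin{align*}
H e^{ik\theta}=H\cos(k\theta)+iH\sin(k\theta)=\sin(k\theta)-i\cos(k\theta)=-i\,e^{ik\theta}.
\end{align*}
For $k<0$, I would use that $\sin$ is odd and $\cos$ is even in the frequency, together with $H\cos(|k|\theta)=\sin(|k|\theta)$ and $H\sin(|k|\theta)=-\cos(|k|\theta)$, to obtain $He^{ik\theta}=+i\,e^{ik\theta}$. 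For $k=0$, $H$ applied to a constant is $0$ (the principal-value integral of $\cot((\theta-\phi)/2)$ over $\mathbb{T}$ vanishes by oddness), which matches $-i\,\mathrm{sgn}(0)\cdot 1=0$. In all three cases this gives $He^{ik\theta}=-i\,\mathrm{sgn}(k)\,e^{ik\theta}$, and reading off the $k$-th Fourier coefficient yields the identity on pure exponentials.

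Next I would pass to trigonometric polynomials. By linearity of $H$ and of the Fourier coefficient functional, for any $P(\theta)=\sum_{|k|\le N}c_k e^{ik\theta}$ we have
\begin{align*}
\widehat{HP}(k)=\sum_{|j|\le N}c_j\,\widehat{He^{ij\theta}}(k)=-i\,\mathrm{sgn}(k)\,c_k=-i\,\mathrm{sgn}(k)\,\widehat{P}(k),
\end{align*}
so the desired identity holds pointwise in $k$ for all trigonometric polynomials.

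For the extension from trigonometric polynomials to arbitrary $f\in L^p(\mathbb{T})$ with $1<p<\infty$, I would invoke two standard facts: (a) trigonometric polynomials are dense in $L^p(\mathbb{T})$; and (b) the M.~Riesz theorem, which asserts that $H$ is a bounded operator on $L^p(\mathbb{T})$ for $1<p<\infty$, i.e.\ $\|Hf\|_{L^p}\le C_p\|f\|_{L^p}$. Choose a sequence of trigonometric polynomials $P_n\to f$ in $L^p$. Then $HP_n\to Hf$ in $L^p$ by (b). Since the Fourier coefficient map $g\mapsto \widehat{g}(k)=\frac{1}{2\pi}\int_{-\pi}^{\pi} g(\theta)e^{-ik\theta}\,d\theta$ is continuous from $L^p$ to $\mathbb{C}$ (by H\"older's inequality, since $e^{-ik\theta}$ lies in every $L^{p'}$), we can pass to the limit in both sides of the polynomial identity and conclude $\widehat{Hf}(k)=-i\,\mathrm{sgn}(k)\,\widehat{f}(k)$.

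The only nontrivial ingredient is the $L^p$-boundedness of the Hilbert transform on $\mathbb{T}$ for $1<p<\infty$; this is the classical M.~Riesz theorem and is where the assumption $1<p<\infty$ is essential (the identity would fail at the endpoints $p=1,\infty$ because $H$ is not bounded there). The rest of the argument is a routine computation plus a density/continuity limit, so no novel obstacle is expected.
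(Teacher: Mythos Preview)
Your proof is correct and follows the standard route. Note, however, that the paper does not actually prove this lemma: it is stated in the Preliminaries section as a classical fact, with references to the textbooks of Butzer--Nessel and Zygmund, and no argument is given. Your argument---computing $H$ on pure exponentials via Lemma~\ref{Heo}, extending by linearity to trigonometric polynomials, and then passing to general $f\in L^p$ by density together with the M.~Riesz boundedness (which the paper records separately as Lemma~\ref{le hilbert estimate})---is exactly the textbook proof one finds in those references, so there is nothing to compare against and no gap to flag.
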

	\begin{lemma}
		Let $f\in L^1$ and $2\pi$-periodic function such that $Hf\in L^1$ and is also $2\pi$-periodic. Then the conjugate Fourier series $\sum_{k=-\infty}^{\infty}\{-i\ sgn k\}\hat{f}(k)e^{ikx}$ of $f$ is the Fourier series of the Hilbert transform (conjugate function) $Hf$, i.e.,
		\begin{align*}
			Hf(x) \sim \sum_{k=-\infty}^{\infty}\{-i\ sgn k\}\hat{f}(k)e^{ikx}.
		\end{align*}
	\end{lemma}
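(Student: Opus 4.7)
The plan is to reduce the statement to a single Fourier-coefficient identity: for every integer $k$, verify that $\widehat{Hf}(k) = -i\operatorname{sgn}(k)\,\hat{f}(k)$. Once this is established for each $k$, the claim that the conjugate series is the Fourier series of $Hf$ (in the $\sim$ sense) follows immediately from the definition of a Fourier series, using the hypothesis $Hf\in L^{1}$ to guarantee that the left-hand side is a classical Fourier coefficient.

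First, I would start from
$\widehat{Hf}(k) = \frac{1}{2\pi}\int_{-\pi}^{\pi} Hf(\theta)\,e^{-ik\theta}\,d\theta$
and substitute the principal-value definition of $Hf$. Because the cotangent kernel is not absolutely integrable across the diagonal $\theta = \phi$, Fubini cannot be invoked directly; I would regularize by introducing the truncated operator
$H_{\varepsilon}f(\theta) = \frac{1}{2\pi}\int_{|\theta-\phi|>\varepsilon}\cot\tfrac{\theta-\phi}{2}\,f(\phi)\,d\phi$,
exchange the order of integration on the truncated region (where the kernel is bounded and $f\in L^{1}$ makes everything absolutely integrable), and then let $\varepsilon\to 0$.

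After the swap, using the antisymmetry $\cot\tfrac{\theta-\phi}{2} = -\cot\tfrac{\phi-\theta}{2}$, the inner $\theta$-integral is the truncated Hilbert transform of $e^{-ik\theta}$ evaluated at $\phi$ (with an overall minus sign). Linearity of $H$ combined with the previous Lemma gives, for every nonzero integer $j$,
$H e^{ij\theta} = -i\operatorname{sgn}(j)\,e^{ij\theta}, \qquad H(1) = 0$,
and applied with $j = -k$ this says that the inner integral converges, as $\varepsilon\to 0$, to $-i\operatorname{sgn}(k)\,e^{-ik\phi}$ uniformly in $\phi$, because the test function is a smooth bounded exponential. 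Uniform convergence in $\phi$ together with $f\in L^{1}$ lets dominated convergence pass the limit inside the $\phi$-integral, yielding
$\widehat{Hf}(k) = -i\operatorname{sgn}(k)\cdot\frac{1}{2\pi}\int_{-\pi}^{\pi}f(\phi)\,e^{-ik\phi}\,d\phi = -i\operatorname{sgn}(k)\,\hat{f}(k)$,
which is precisely the desired identity.

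The main technical obstacle is justifying the $\varepsilon\to 0$ limit, since the Hilbert transform is unbounded on $L^{1}$ and so $H_{\varepsilon}f\to Hf$ does \emph{not} hold in $L^{1}$ in general. The way around this is that we never need $L^{1}$-convergence of the truncated Hilbert transform of the rough datum $f$; we only need uniform convergence of the truncated Hilbert transform of the smooth exponential $e^{-ik\theta}$, which is a direct Calderón–Zygmund-style computation one can carry out explicitly for trigonometric functions (or read off from the previous Lemma together with control of the cutoff error, which is $O(\varepsilon|k|)$). Hence no delicate singular-integral theory for $L^{1}$ data is needed beyond the assumption $Hf\in L^{1}$, which only plays the role of making $\widehat{Hf}(k)$ a well-defined classical Fourier coefficient.
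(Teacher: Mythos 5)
First, note that the paper does not actually prove this lemma: it is quoted as a classical fact with a pointer to Butzer--Nessel and Zygmund, so there is no in-paper argument to compare against. Your proposal is therefore an attempt to reprove a classical theorem (Zygmund, \emph{Trigonometric Series}, Ch.~VII), and it contains a genuine gap at exactly the point where that theorem is hard. Your duality computation correctly shows that
\begin{align*}
\lim_{\varepsilon\to 0}\frac{1}{2\pi}\int_{-\pi}^{\pi}H_{\varepsilon}f(\theta)e^{-ik\theta}\,d\theta=-i\,\mathrm{sgn}(k)\hat f(k),
\end{align*}
since Fubini on the truncated (bounded) kernel is legitimate for $f\in L^{1}$ and the truncated transform of the exponential converges uniformly and boundedly. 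What is not justified is the very first step of your chain, namely
\begin{align*}
\int_{-\pi}^{\pi}Hf(\theta)e^{-ik\theta}\,d\theta=\lim_{\varepsilon\to 0}\int_{-\pi}^{\pi}H_{\varepsilon}f(\theta)e^{-ik\theta}\,d\theta .
\end{align*}
You only have $H_{\varepsilon}f\to Hf$ pointwise a.e., and to integrate this limit against the bounded function $e^{-ik\theta}$ you need uniform integrability of the family $\{H_{\varepsilon}f\}_{\varepsilon>0}$, or $L^{1}$-convergence, or an integrable dominating function. None of these is available: the maximal truncated transform $\sup_{\varepsilon}|H_{\varepsilon}f|$ is only weak-$(1,1)$ for $f\in L^{1}$, and the hypothesis $Hf\in L^{1}$ does not by itself upgrade this. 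Your closing paragraph identifies ``the $\varepsilon\to0$ limit'' as the obstacle but then argues only about the \emph{inner} limit (the truncated transform of the smooth exponential), which is the easy one; the problematic limit is the \emph{outer} one, taken before Fubini is ever applied.

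A telling symptom is that in your argument the hypothesis $Hf\in L^{1}$ is used only to make $\widehat{Hf}(k)$ well defined, so if the proof were complete it would yield the coefficient identity with essentially no input from the integrability of $Hf$. In the classical proofs this hypothesis does real work: one passes through the conjugate Poisson integral $v(r,x)$ (the Abel means of the conjugate series), uses $v(r,x)\to Hf(x)$ a.e., and then proves the non-trivial fact that $Hf\in L^{1}$ forces $v(r,\cdot)\to Hf$ in $L^{1}$, which is what licenses reading off the Fourier coefficients. To repair your proof you would need either that $L^{1}$-convergence statement or the uniform integrability of $\{H_{\varepsilon}f\}$, both of which are comparable in depth to the lemma itself. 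Since the paper only ever applies $H$ to functions in $L^{2}$ or better, the appropriate move in context is simply to cite the result, as the authors do.
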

	\begin{lemma}\label{le hilbert estimate}
		The Hilbert transform $H$ is a bounded linear operator from space $L^{p}$ to $L^{p}$ with $1<p<\infty$ and
		\begin{equation}
			\| Hf\|_{L^{p}}\leq C_{p}\| f\|_{L^{p}},
		\end{equation}
		for a constant $C_{p}>0$ depending on $p$.
		\label{HLp}
	\end{lemma}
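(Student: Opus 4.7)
The plan is to establish the $L^{p}$ boundedness of $H$ on the torus by the classical M.~Riesz scheme: trivially at $p=2$, via a Calder\'on--Zygmund decomposition for the weak endpoint, Marcinkiewicz interpolation for $1<p<2$, and duality for $2<p<\infty$.

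First, the case $p=2$ is immediate from the previous lemma: since $\widehat{Hf}(k)=-i\,\mathrm{sgn}(k)\hat f(k)$ is a Fourier multiplier of modulus at most $1$, Parseval yields $\|Hf\|_{L^{2}}\le\|f\|_{L^{2}}$.

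Next, I would establish the weak-type $(1,1)$ estimate
\[
\bigl|\{\theta\in\mathbb{T}:|Hf(\theta)|>\alpha\}\bigr|\le \frac{C}{\alpha}\|f\|_{L^{1}},\qquad \alpha>0.
\]
The approach is a standard Calder\'on--Zygmund decomposition at height $\alpha$: write $f=g+b$ with $g\in L^{\infty}\cap L^{2}$ satisfying $\|g\|_{L^{\infty}}\lesssim\alpha$ and $\|g\|_{L^{1}}\le\|f\|_{L^{1}}$, and $b=\sum_{j}b_{j}$ where each $b_{j}$ is supported on a disjoint interval $I_{j}\subset\mathbb{T}$, has mean zero, and $\sum_{j}|I_{j}|\lesssim\alpha^{-1}\|f\|_{L^{1}}$. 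The good part is controlled by Chebyshev together with the $L^{2}$ bound just proved. For the bad part, using the cancellation $\int_{I_{j}}b_{j}=0$ one writes
\[
Hb_{j}(\theta)=\frac{1}{2\pi}\int_{I_{j}}\Bigl(\cot\tfrac{\theta-\phi}{2}-\cot\tfrac{\theta-c_{j}}{2}\Bigr)b_{j}(\phi)\,d\phi,
\]
where $c_{j}$ is the center of $I_{j}$, and exploits the kernel smoothness estimate
\[
\int_{\mathbb{T}\setminus 2I_{j}}\Bigl|\cot\tfrac{\theta-\phi}{2}-\cot\tfrac{\theta-c_{j}}{2}\Bigr|\,d\theta\le C,
\]
uniformly in $j$ and in $\phi\in I_{j}$. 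Summing in $j$ controls $Hb$ in $L^{1}$ off $\bigcup_{j}2I_{j}$, while $\bigcup_{j}2I_{j}$ itself has measure $\lesssim\alpha^{-1}\|f\|_{L^{1}}$, completing the weak-$(1,1)$ bound.

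Then Marcinkiewicz interpolation between strong-$(2,2)$ and weak-$(1,1)$ gives $\|Hf\|_{L^{p}}\le C_{p}\|f\|_{L^{p}}$ for $1<p\le 2$. For $2<p<\infty$, I would use the duality relation $H^{\ast}=-H$ on $L^{2}$: for $g\in L^{p'}$ with $1/p+1/p'=1$, one has $\langle Hf,g\rangle=-\langle f,Hg\rangle$, so
\[
\|Hf\|_{L^{p}}=\sup_{\|g\|_{L^{p'}}\le 1}|\langle Hf,g\rangle|\le \|f\|_{L^{p}}\sup_{\|g\|_{L^{p'}}\le 1}\|Hg\|_{L^{p'}}\le C_{p'}\|f\|_{L^{p}},
\]
with $C_{p'}$ the constant furnished by the previous range $1<p'<2$. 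The main obstacle is the kernel cancellation estimate, but this is classical: the torus kernel satisfies $\cot(\theta/2)=2/\theta+r(\theta)$ with $r$ smooth and bounded on $\mathbb{T}$, so the desired bound reduces to the standard real-line Calder\'on--Zygmund kernel estimate with a harmless smooth correction.
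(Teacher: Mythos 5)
Your proposal is correct, but note that the paper does not prove this lemma at all: it is quoted as a classical fact with a pointer to Butzer--Nessel and Zygmund, so there is no in-paper argument to compare against. What you have written is the standard M.~Riesz/Calder\'on--Zygmund proof, and every step is sound: the $L^{2}$ bound follows from the multiplier identity $\widehat{Hf}(k)=-i\,\mathrm{sgn}(k)\hat f(k)$ and Parseval; the weak-$(1,1)$ estimate follows from the Calder\'on--Zygmund decomposition once one checks the H\"ormander condition for the periodic kernel, which holds because $\partial_{\phi}\cot\frac{\theta-\phi}{2}=\frac12\csc^{2}\frac{\theta-\phi}{2}\lesssim|\theta-\phi|^{-2}$ on $\mathbb{T}$ (equivalently, your decomposition $\cot(\theta/2)=2/\theta+r(\theta)$ with $r$ smooth); Marcinkiewicz interpolation and the antisymmetry $\langle Hf,g\rangle=-\langle f,Hg\rangle$ then cover $1<p<2$ and $2<p<\infty$ respectively. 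Two routine points you leave implicit but should be aware of: the Calder\'on--Zygmund decomposition at height $\alpha$ only applies when $\alpha>\frac{1}{2\pi}\|f\|_{L^{1}}$ (for smaller $\alpha$ the weak bound is trivial since $|\mathbb{T}|=2\pi\le C\alpha^{-1}\|f\|_{L^{1}}$), and the a priori estimates should first be proved for a dense class (say trigonometric polynomials, for which the principal value is classical) and then extended by density. Neither is a gap in substance.
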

	
	Finally, we prove
	\begin{lemma}
		For any integer $k\geq 1$, $\omega = \sin k\theta$ is a steady solution to \ref{DG} with $u(0)=0$.
	\end{lemma}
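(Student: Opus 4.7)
The plan is a direct verification by computing $u$ explicitly from the constraint equation and then checking that the transport term cancels the stretching term on the right-hand side of the vorticity equation.

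First, I would take $\omega(t,\theta)=\sin k\theta$ (time independent) and apply Lemma \ref{Heo} to obtain $H\omega=-\cos k\theta$. The constraint $\partial_{\theta}u=H\omega=-\cos k\theta$ then integrates to $u(t,\theta)=-\frac{1}{k}\sin k\theta + C(t)$, and imposing the gauge $u(t,0)=0$ forces $C(t)\equiv 0$, yielding the explicit formula $u(t,\theta)=-\frac{1}{k}\sin k\theta$.

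Second, I would substitute into the vorticity equation \ref{DG}. Since $\omega$ does not depend on $t$, the left-hand side reduces to $u\,\partial_{\theta}\omega=-\frac{1}{k}\sin k\theta\cdot k\cos k\theta=-\sin k\theta\cos k\theta$, and the right-hand side is $\omega\,\partial_{\theta}u=\sin k\theta\cdot(-\cos k\theta)=-\sin k\theta\cos k\theta$. The two sides agree, so $\partial_{t}\omega=0$, confirming that $(\omega,u)=(\sin k\theta,\,-\tfrac{1}{k}\sin k\theta)$ solves \ref{DG} under the gauge $u(t,0)=0$.

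Since every step is an explicit elementary identity, there is no real obstacle; the only thing to be careful about is the sign convention in the Hilbert transform (as recorded in Lemma \ref{Heo}) and the correct choice of the integration constant dictated by the gauge condition $u(t,0)=0$.
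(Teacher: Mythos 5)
Your proposal is correct and follows essentially the same route as the paper: compute $H(\sin k\theta)=-\cos k\theta$ via Lemma \ref{Heo}, integrate with the gauge $u(t,0)=0$ to get $u=-\frac{1}{k}\sin k\theta$, and check the cancellation $u\,\partial_\theta\omega=\omega\,\partial_\theta u$. In fact you spell out the final cancellation explicitly, which the paper leaves implicit.
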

	\begin{proof}
		Applying the properties of Hilbert transform in the Lemma \ref{Heo}, we have
		\begin{align*}
			H(\sin k\theta) = -\cos k\theta,
		\end{align*}
		for $k\geq 1$. It follows from $u(0)=0$ that
		\begin{align*}
			u = -\frac{1}{k}\sin k\theta,
		\end{align*}
		which shows that $\omega = \sin k\theta\ (k=1,2,\cdots)$ is
		a steady solution to  \ref{DG} with $u(0)=0$.
	\end{proof}
	\begin{remark}
		As usual, $\omega = \sin \theta$ is called a ground state and $\omega = \sin k\theta$ with $k\ge 2$ are called excited states of  \ref{DG} (see \cite{JS2019,LL2020}).
	\end{remark}
	
	\section{Existence and uniqueness of linearized equation}\label{exist}
	In this section, we give the proof of Theorem \ref{the existence} and establish the global existence and uniqueness of equation \ref{etav} using Galerkin's method. To derive the decay rates for the coefficients $\tilde{\eta}_{k}^{(o)}(t), \frac{d}{dt}\tilde{\eta}_{k}^{(o)}(t)$ and $\frac{d^{2}}{dt^{2}}\tilde{\eta}_{k}^{(o)}(t)$ as required in Section \ref{sec linear instability}, we construct an approximate solution employing the basis  $\{\tilde{e}_{k}^{(o)}, k\geq 1\}.$ The proof of Theorem \ref{the gexistence}$^\prime$ can be proved in a similar way by using the basis   $\{\sin k\theta, k\geq 1\}\cup\{\cos k\theta, k\geq 0\}$ and we omit it here.
	
	{\it Proof of Theorem \ref{the existence}.}
	
	{\bf Step 1. Construction of the approximate solution.}
	
	\quad Fix a positive integer $n$, and define
	\begin{align}
		&\eta_{n} = \sum_{k=1}^{n}\tilde{\eta}_{k}^{(o)}(t)\tilde{e}_{k}^{(o)}\label{etans},\\
		&\partial_{\theta}v_{n} = H\eta_{n}\label{vn}.
	\end{align}
	We aim to determine the coefficients $\tilde{\eta}_k^{(o)}(t)$ such that
	\begin{align}\label{etan}
		\big\langle\partial_{t}\partial_{\theta}\eta_{n}, \partial_{\theta}\tilde{e}_{k}^{(o)}\rho\big\rangle + \big\langle\partial_{\theta}L\eta_{n}, \partial_{\theta}\tilde{e}_{k}^{(o)}\rho\big\rangle = 0
	\end{align}
	and
	\begin{align}\label{eta0}
		\big\langle\partial_{\theta}\eta_{0}, \partial_{\theta}\tilde{e}_{k}^{(o)}\rho\big\rangle= \tilde{\eta}_{k}^{(o)}(0)
	\end{align}
	for $0\leq t\leq T$, and $k=1,2,\cdots, n.$ In fact,  \ref{etan} can be rewritten as
	\begin{align}\label{retan}
		\big\langle\partial_{t}\sum_{l=1}^{n}\tilde{\eta}_{l}^{(o)}(t)e_{l+1}^{(o)}, e_{k+1}^{(o)}\big\rangle + \big\langle-\frac{1}{2\sin\theta}\partial_{\theta}L\eta_{n}, e_{k+1}^{(o)}\big\rangle = 0,
	\end{align}
	where
	\begin{align}\label{pleta}
		-\frac{1}{2\sin\theta}\partial_{\theta}L\eta_{n} = -\frac{1}{2}\cos\theta\partial_{\theta}^{2}\eta_{n} - 2\cos\theta\eta_{n} -\cos\theta H(\partial_{\theta}\eta_{n}) - 4\cos\theta v_{n}.
	\end{align}
	To simplify the notation, we set $	u_{n}:=-\sqrt{\pi}\rho^{1/2}\partial_{\theta}\eta_{n}$
	and define
	\begin{align*}
		L_{1}(u_{n})&:=-\sqrt{\pi}\rho^{1/2}\partial_{\theta}L\eta_{n}\\
		&=\frac{1}{2}\sin2\theta\partial_{\theta}u_{n}+\cos^{2}\theta u_{n}+2\cos\theta H(\sin\theta u_{n})-2\cos\theta\eta_{n}-4\cos\theta v_{n}.
	\end{align*}
	Then the ordinary differential equations \ref{etan} are equivalent to
	\begin{align}\label{un}
		\left\langle\partial_{t}u_{n}, \sin(k+1)\theta\right\rangle + \left\langle L_{1}(u_{n}), \sin(k+1)\theta\right\rangle = 0.
	\end{align}
	In fact, it can be verified that \ref{un} holds true for $k=0$, where we defined $\tilde{\eta}_{0}^{(o)}(t)=0.$
	According to the standard existence theory for the ordinary differential equations, there exists a unique absolutely continuous functions $\tilde{\eta}_{k}^{(o)}(t) (k=1,2,\cdots,n)$ satisfying \ref{etan} and \ref{eta0} for $0\leq t\leq T$.
	
	{\bf Step 2. Energy estimates.}~\\
	{ \it \underline{$L^{2}$ estimate of $u_{n}.$}}	Multiplying \ref{un} by $\tilde{\eta}_{k}^{(o)}(t),$ and summing up over $k=1,2,\cdots,n,$ we obtain
	\begin{align}
		\frac{1}{2}\frac{d}{dt}\| u_{n}\|_{L^{2}}^{2}& = -\left\langle L_{1}(u_{n}), u_{n}\right\rangle\nonumber\\
		&=-\big\langle-\frac{1}{2}\cos\theta
		\partial_{\theta}^{2}\eta_{n}-2\cos\theta\eta_{n}-\cos\theta H(\partial_{\theta}\eta_{n})-4\cos\theta v_{n}, u_{n}\big\rangle\\
		&=-\big\langle \frac{1}{2}\sin2\theta\partial_{\theta}u_{n} + \cos^{2}\theta u_{n}, u_{n}\big\rangle - \left\langle 2\cos\theta H(\sin\theta u_{n}), u_{n}\right\rangle\nonumber \\
		&\quad + \left\langle 2\cos\theta\eta_{n}, u_{n}\right\rangle + \left\langle 4\cos\theta v_{n}, u_{n}\right\rangle.\nonumber
	\end{align}
	Applying H\"{o}lder's inequality and the $L^{p}$ estimate of Hilbert transform, we obtain
	\begin{align}\label{cosHun}
		\left\langle 2\cos\theta H(\sin\theta u_{n}), u_{n}\right\rangle& \lesssim \ \| H(\sin\theta u_{n})\|_{L^{2}}\| u_{n}\|_{L^{2}}\nonumber\\ &\lesssim\ \| u_{n}\|_{L^{2}}^{2}.
	\end{align}
	Since $\eta_{n}(0)=0,$ we obtain
	\begin{align}
		\left\langle 2\cos\theta\eta_{n}, u_{n}\right\rangle
		&\lesssim\ \|\eta_{n}\|_{L^{\infty}}\| u_{n}\|_{L^{2}}\nonumber\\
		& \lesssim\ \|\partial_{\theta}\eta_{n}\|_{L^{2}}\| u_{n}\|_{L^{2}}\\
		&\lesssim\ \| u_{n}\|_{L^{2}}^{2}.\nonumber
	\end{align}
	Since $\int_{\mathbb{S}^1} v_{n} d\theta = \int_{\mathbb{S}^1} \eta_{n} d\theta = 0$, we can use Poincar\'{e}'s inequality, H\"{o}lder's inequality, $\partial_{\theta}v_{n}=H\eta_{n}$ and Lemma \ref{HLp} to obtain
	\begin{align}\label{cosv}
		\left\langle 4\cos\theta v_{n}, u_{n}\right\rangle &\lesssim\ \| v_{n}\|_{L^{\infty}}\| u_{n}\|_{L^{2}}\nonumber\\
		& \lesssim\ \|\eta_{n}\|_{L^{2}}\| u_{n}\|_{L^{2}}\\
		& \lesssim\ \|\partial_{\theta}\eta_{n}\|_{L^{2}}\| u_{n}\|_{L^{2}}\nonumber\\
		&\lesssim\ \| u_{n}\|_{L^{2}}^{2}.\nonumber
	\end{align}
	By collecting \ref{cosHun}-\ref{cosv}, we deduce that
	\begin{align*}
		\frac{d}{dt}\|u_{n}(t)\|_{L^{2}} \leq\ C\| u_{n}(t)\|_{L^{2}},
	\end{align*}
	and thus
	\begin{align}
		\mathop{\sup}_{0\le t\leq T}\|u_{n}(t)\|_{L^{2}} \leq\ C,
	\end{align}
	for any $T>0$, where $C$ is a positive constant independent of $n$.\\
	\\
	{\it \underline{$H^{1}$ estimate of $u_{n}.$}}	Multiplying equation \ref{un} by $(k+1)^{2}\tilde{\eta}_{k}^{(o)}(t)$, summing up over $k=1,2,\cdots,n,$ and using the integration by parts, we obtain
	\begin{align}
		\frac{1}{2}\frac{d}{dt}\|\partial_{\theta}u_{n}\|_{L^{2}}\ =\ -\left\langle\partial_{\theta}L_{1}(u_{n}), \partial_{\theta}u_{n}\right\rangle,
	\end{align}
	where
	\begin{align*}
		\partial_{\theta}L_{1}(u_{n}) = &\frac{1}{2}\sin2\theta\partial_{\theta}^{2}u_{n} + (\cos2\theta+\cos^{2}\theta)\partial_{\theta}u_{n} + \sin2\theta u_{n} - 2\sin\theta H(\sin\theta u_{n})\\
		&+ 2\cos\theta H(\cos\theta u_{n}+\sin\theta\partial_{\theta}u_{n})+2\sin\theta\eta_{n}+4\sin\theta v_{n}-4\cos\theta H\eta_{n}.
	\end{align*}
	Applying H\"{o}lder's inequality, Sobolev's inequality and Lemma \ref{HLp}, we have
	\begin{align}\label{sineta}
		\left\langle 2\sin\theta\eta_{n}, \partial_{\theta}u_{n}\right\rangle &\lesssim\ \|\eta_{n}\|_{L^{\infty}}\|\partial_{\theta}u_{n}\|_{L^{2}}\nonumber\\
		&\lesssim\ \| u_{n}\|_{H^{1}}^{2}.
	\end{align}
	Similarly, we have
	\begin{align}
		\left\langle 4\sin\theta v_{n}, \partial_{\theta}u_{n}\right\rangle &\lesssim\ \| v_{n}\|_{L^{\infty}}\|\partial_{\theta}u_{n}\|_{L^{2}}\nonumber\\
		&\lesssim\ \| H\eta_{n}\|_{L^{2}}\|\partial_{\theta}u_{n}\|_{L^{2}}\nonumber\\
		& \lesssim\ \|\eta_{n}\|_{L^{2}}\|\partial_{\theta}u_{n}\|_{L^{2}}\\
		&\lesssim\ \| u_{n}\|_{L^{2}}\|\partial_{\theta}u_{n}\|_{L^{2}}\nonumber\\
		&\lesssim\ \| u_{n}\|_{H^{1}}^{2}\nonumber
	\end{align}
	and
	\begin{align}\label{cosHeta}
		\left\langle 4\cos\theta H\eta_{n}, \partial_{\theta}u_{n}\right\rangle \lesssim\ \|\eta_{n}\|_{L^{2}}\| u_{n}\|_{L^{2}} \lesssim\ \| u_{n}\|_{H^{1}}^{2}.
	\end{align}
	Combining \ref{sineta}-\ref{cosHeta}, we obtain
	\begin{align*}
		\frac{d}{dt}\|\partial_{\theta}u_{n}(t)\|_{L^{2}} \leq C\|u_{n}(t)\|_{H^{1}},
	\end{align*}
	and thus
	\begin{align}\label{unH1}
		\mathop{\sup}_{0\leq t\leq T}\|u_{n}(t)\|_{H^{1}} \leq\ C,
	\end{align}
	which gives the $H^{1}$ estimate of $u_{n}$, where $C$ is a constant depending on $\mathbb{T}, u_{0}$ and $T>0$.\\
	\\
	{ \it \underline{$L^{2}$ estimate of $\partial_{t}u_{n}$ and $\partial_{t}^{2}u_{n}.$}} We multiply equation \ref{un} by $\frac{d}{dt}\tilde{\eta}_{k}^{(o)}(t)$, sum up over $k=1,2,\cdots,n$, to find
	\begin{align}
		\|\partial_{t}u_{n}\|_{L^{2}} = -\left\langle L_{1}(u_{n}), \partial_{t}u_{n}\right\rangle.
	\end{align}
	Due to $\eta_{n}(0)=0$ and $\int_{\mathbb{T}}\eta_{n}=0$, it then follows from H\"{o}lder's inequality and Poincar\'{e}'s inequality that
	\begin{align}\label{dtun}
		\mathop{\sup}_{0\leq t\leq T}\|\partial_{t}u_{n}(t)\|_{L^{2}} \leq\ C\| u_{n}(t)\|_{H^{1}} \leq\ C(\mathbb{T}, u_{0}, T),
	\end{align}
	which yields the uniform estimate of $\|\partial_{t}u_{n}(t)\|_{L^{2}}.$\\
	\\
	{\it  \underline{$L^{2}$ estimate of $\partial_{t}^{2}u_{n}.$}}	We apply $\partial_{t}$ to equation \ref{un} and multiply $\frac{d^{2}}{dt^{2}}\tilde{\eta}_{k}^{(o)}(t)$ sum up over $k=1,2,\cdots,n$ to obtain
	\begin{align}\label{pttu}
		\|\partial_{t}^{2}u_{n}\|_{L^{2}}&= -\left\langle\partial_{t}L_{1}(u_{n}), \partial_{t}^{2}u_{n}\right\rangle\nonumber\\
		&= -\big\langle \frac{1}{2}\sin2\theta\partial_{t}\partial_{\theta}u_{n}, \partial_{t}^{2}u_{n}\big\rangle+\left\langle 2\cos\theta\partial_{t}\eta_{n},\partial_{t}^{2}u_{n}\right\rangle +\left\langle 4\cos\theta\partial_{t}v_{n}, \partial_{t}^{2}u_{n}\right\rangle\\
		&\quad -\left\langle\cos^{2}\theta\partial_{t}u_{n}+2\cos\theta H(\sin\theta\partial_{t}u_{n}), \partial_{t}^{2}u_{n}\right\rangle.\nonumber
	\end{align}
	To get the estimate of $\left\langle\sin2\theta\partial_{t}\partial_{\theta}u_{n}, \partial_{t}^{2}u_{n}\right\rangle$, we first give the $H^{2}$ estimate of $u_{n}$. More generally, here we give the $H^{m}$ estimate of $u_{n}$.
	Multiplying \ref{un} by $(k+1)^{2m}\tilde{\eta}_{k}^{(o)}(t)$, summing up over $k=1,2\cdots,n$, and using the integration by parts, we obtain
	\begin{align}\label{pthemun}
		\frac{1}{2}\frac{d}{dt}\|\partial_{\theta}^{m}u_{n}\|_{L^{2}} = -\left\langle\partial_{\theta}^{m}L_{1}(u_{n}), \partial_{\theta}^{m}u_{n}\right\rangle.
	\end{align}
	For the sake of convenience, we denote the terms as lower order terms $(l.o.t)$ if their $L^{2}-$norms are bounded by $\| u_{n}\|_{H^{m-1}}$. Thus,
	\begin{align*}
		\partial_{\theta}^{m}L_{1}(u_{n}) = \frac{1}{2}\sin2\theta\partial_{\theta}^{m+1}u_{n} + (m\cos2\theta + \cos^{2}\theta)\partial_{\theta}^{m}u_{n} + 2\cos\theta H(\sin\theta\partial_{\theta}^{m}u_{n}) + l.o.t.
	\end{align*}
	It then follows from H\"{o}lder's inequality and Lemma \ref{HLp} that
	\begin{align}\label{ecosH}
		\left\langle 2\cos\theta H(\sin\theta\partial_{\theta}^{m}u_{n}), \partial_{\theta}^{m}u_{n}\right\rangle &\leq 2\| H(\sin\theta\partial_{\theta}^{m}u_{n})\|_{L^{2}}\|\partial_{\theta}^{m}u_{n}\|_{L^{2}}\nonumber\\ &\leq C\|\partial_{\theta}^{m}u_{n}\|_{L^{2}}^2.
	\end{align}
	Substituting \ref{ecosH} into \ref{pthemun}, we conclude that
	\begin{align}\label{epthemu}
		\mathop{\sup}_{0\leq t\leq T}\|u_{n}(t)\|_{H^{m}} \leq C(\mathbb{T},m,u_{0}, T).
	\end{align}
	\\
	We now turn to the estimate of $\partial_{t}\partial_{\theta}u_{n}$. Multiply equation \ref{un} by $(k+1)^{2}\frac{d}{dt}\tilde{\eta}_{k}^{(o)}(t)$, summing up over $k=1,2,\cdots,n$, and use the integration by parts to obtain
	\begin{align}\label{eptthun}
		\|\partial_{t}\partial_{\theta}u_{n}\|_{L^{2}}^{2} = -\left\langle\partial_{\theta}L_{1}(u_{n}), \partial_{t}\partial_{\theta}u_{n}\right\rangle.
	\end{align}
	It follows from H\"{o}lder's inequality that
	\begin{align}\label{ptheL}
		\left\langle\partial_{\theta}L_{1}(u_{n}), \partial_{t}\partial_{\theta}u_{n}\right\rangle &\leq\ \|\partial_{\theta}L_{1}(u_{n})\|_{L^{2}}\|\partial_{t}\partial_{\theta}u_{n}\|_{L^{2}}\nonumber\\
		& \lesssim\| u_{n}\|_{H^{2}}\|\partial_{t}\partial_{\theta}u_{n}\|_{L^{2}}.
	\end{align}
	Substituting \ref{ptheL} into \ref{eptthun}, we obtain
	\begin{align}\label{epttheu}
		\mathop{\sup}_{0\leq t\leq T}\|\partial_{t}\partial_{\theta}u_{n}(t)\|_{L^{2}}\leq\ C(\mathbb{T}, \| u_{0}\|_{H^{2}}, T).
	\end{align}
	We now  estimate  $\partial_{t}^{2}u_{n}$.
	Due to $\partial_{t}\eta_{n}(0)=0$, we have
	\begin{align}\label{cosptetan}
		\left\langle\cos\theta\partial_{t}\eta_{n}, \partial_{t}^{2}u_{n}\right\rangle &\leq\ \|\partial_{t}\eta_{n}\|_{L^{\infty}}\|\cos\theta\|_{L^{2}}\|\partial_{t}^{2}u_{n}\|_{L^{2}}\nonumber\\
		&\lesssim\|\partial_{t}\partial_{\theta}\eta_{n}\|_{L^{2}}\|\partial_{t}^{2}u_{n}\|_{L^2}\\
		&\lesssim\| u_{n}\|_{H^{2}}\|\partial_{t}^{2}u_{n}\|_{L^{2}}.\nonumber
	\end{align}
	For the term $\left\langle\cos\theta\partial_{t}v_{n}, \partial_{t}^{2}u_{n}\right\rangle$, it follows from H\"{o}lder's inequality, Sobolev's embedding, and Lemma \ref{le hilbert estimate} that
	\begin{align}\label{cosptvn}
		\left\langle\cos\theta\partial_{t}v_{n}, \partial_{t}^{2}u_{n}\right\rangle &\leq\ \|\partial_{t}v_{n}\|_{L^{\infty}}\|\cos\theta\|_{L^{2}}\|\partial_{t}^{2}u_{n}\|_{L^{2}}\nonumber\\
		&\lesssim\| H(\partial_{t}\eta_{n})\|_{L^{2}}\|\partial_{t}^{2}u_{n}\|_{L^{2}}\nonumber\\
		&\lesssim\|\partial_{t}\eta_{n}\|_{L^{2}}\|\partial_{t}^{2}u_{n}\|_{L^{2}}\\
		&\lesssim\|\partial_{t}\partial_{\theta}\eta_{n}\|_{L^{1}}\|\partial_{t}^{2}u_{n}\|_{L^{2}}\nonumber\\
		&\lesssim\|\partial_{t}u_{n}\|_{L^{2}}\|\partial_{t}^{2}u_{n}\|_{L^{2}}.\nonumber
	\end{align}
	Substituting \ref{epthemu} and \ref{epttheu}-\ref{cosptvn} into \ref{pttu}, we obtain
	\begin{align}\label{epttu}
		\mathop{\sup}_{0\leq t\leq T}\|\partial_{t}^{2}u_{n}(t)\|_{L^{2}}\leq\ C(\mathbb{T}, \| u_{0}\|_{H^{2}}, T),
	\end{align}
	which yields the uniform estimate of $\partial_{t}^{2}u_{n}$.\\
	\\
	{\it \underline{Higher order estimates of $\partial_{t}u_{n}$ and $\partial_{t}^{2}u_{n}.$}}	In order to obtain the $H^{m-1}$ estimate of $\partial_{t}u_{n}$, we multiply equation \ref{un} by $(k+1)^{2m-2}\frac{d}{dt}\tilde{\eta}_{k}^{(o)}(t)$ and sum up
	over $k=1,2,\cdots,n.$ It then follows from the integration by parts that
	\begin{align}\label{ptthu}
		\|\partial_{t}\partial_{\theta}^{m-1}u_{n}\|_{L^{2}} = -\left\langle-\partial_{\theta}^{m-1}L_{1}(u_{n}), \partial_{t}\partial_{\theta}^{m-1}u_{n}\right\rangle.
	\end{align}
	For the $\partial_{\theta}^{m-1}L_{1}(u_{n})$, we have
	\begin{align*}
		\partial_{\theta}^{m-1}L_{1}(u_{n}) = \frac{1}{2}\sin2\theta\partial_{\theta}^{m}u_{n} + l.o.t,
	\end{align*}
	where $(l.o.t)$ denotes the terms that their $L^{2}-$norms are bounded by $\| u_{n}\|_{H^{m-1}}$.
	Then we obtain
	\begin{align*}
		\|\partial_{t}\partial_{\theta}^{m-1}u_{n}\|_{L^{2}}\leq\ \| u_{n}\|_{H^{m}}.
	\end{align*}
	Thus we can conclude that
	\begin{align}\label{eptm1u}
		\mathop{\sup}_{0\leq t\leq T}\|\partial_{t}u_{n}(t)\|_{H^{m-1}} \leq\ C(\mathbb{T}, \| u_{0}\|_{H^{m}}, T),
	\end{align}
	where we have used Sobolev's embedding and H\"{o}lder's inequality.\\
	
	Similarly, to get the $H^{m-2}$ estimate of $\partial_{t}^{2}u_{n}$. We apply $\partial_{t}$ to equation \ref{un} and multiply $(k+1)^{2m-4}\frac{d^{2}}{dt^{2}}\tilde{\eta}_{k}^{(o)}(t)$, sum up over $k=1.2\cdots,n$ and deduce that
	\begin{align}\label{pttm2u}
		\|\partial_{t}^{2}\partial_{\theta}^{m-2}u_{n}\|_{L^{2}}^{2} = -\left\langle\partial_{t}^{2}\partial_{\theta}^{m-2}L_{1}(u_{n}), \partial_{t}\partial_{\theta}^{m-2}u_{n}\right\rangle
		.
	\end{align}
	Substituting \ref{epthemu} and \ref{eptm1u} into \ref{pttm2u}, we obtain
	\begin{align}\label{epttm2u}
		\|\partial_{t}u_{n}(t)\|_{H^{m-2}} \leq\ C(\mathbb{T}, m, u_{0}, T),
	\end{align}
	which yields the uniform estimate of $\|\partial_{t}^{2}u_{n}(t)\|_{H^{m-2}}$.\\
	
	Due to $u_{n} = -\sqrt{\pi}\rho^{1/2}\partial_{\theta}\eta_{n}$, \ref{epthemu},\ref{eptm1u} and \ref{epttm2u} can be rewritten as
	\begin{align}
		&\mathop{\sup}_{0\leq t\leq T}\|\rho^{1/2}\partial_{\theta}\eta_{n}(t)\|_{H^{m}}\leq C(\mathbb{T},m,\rho^{1/2}\partial_{\theta}\eta_{0},T),\label{etam}\\
		&\mathop{\sup}_{0\leq t\leq T}\|\rho^{1/2}\partial_{t}\partial_{\theta}\eta_{n}(t)\|_{H^{m-1}}\leq C(\mathbb{T},m,\rho^{1/2}\partial_{\theta}\eta_{0},T),\label{tetam}\\
		&\ \mathop{\sup}_{0\leq t\leq T}\|\rho^{1/2}\partial_{t}^{2}\partial_{\theta}\eta_{n}(t)\|_{H^{m-2}}\leq\ C(\mathbb{T},m,\rho^{1/2}\partial_{\theta}\eta_{0},T)\label{t2etam}
	\end{align}
	for any $T>0$.
	Meanwhile, it follows from \ref{etam}, \ref{tetam} and \ref{t2etam} that
	\begin{corollary}\label{coro}
		Assume that the conditions of Theorem \ref{the existence} hold. Then there exists a constant $C(\mathbb{T},m,\eta_{0},T)$ such that the coefficients of $\eta_{n}$ in \ref{etans} satisfy
		\begin{align}
			&\mathop{\sup}_{0\leq t\leq T}|\tilde{\eta}_{k}^{(o)}(t)|\ \leq\ \frac{C(\mathbb{T},m,\eta_{0},T)}{k^{m}},\label{etak}\\
			&\mathop{\sup}_{0\leq t\leq T}|\frac{d}{dt}\tilde{\eta}_{k}^{(o)}(t)|\ \leq\ \frac{C(\mathbb{T},m,\eta_{0},T)}{k^{m-1}},\label{dtetak}\\
			&\mathop{\sup}_{0\leq t\leq T}|\frac{d^{2}}{dt^{2}}\tilde{\eta}_{k}^{(o)}(t)|\ \leq\ \frac{C(\mathbb{T},m,\eta_{0},T)}{k^{m-2}}.\label{dttetak}
		\end{align}
	\end{corollary}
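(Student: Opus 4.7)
The plan is to deduce the three pointwise bounds directly from the uniform Sobolev estimates on $u_{n}$, $\partial_{t}u_{n}$, and $\partial_{t}^{2}u_{n}$ obtained in the preceding step, by identifying the coefficients $\tilde{\eta}_{k}^{(o)}(t)$ as generalized Fourier coefficients of $u_{n}$ with respect to an orthonormal system on the torus.

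First, I would invoke the identity $\partial_{\theta}\tilde{e}_{k}^{(o)}/\sin\theta=-2\sin(k+1)\theta$ recorded in the proof of Lemma \ref{lebasis}. Combined with the definition $u_{n}=-\sqrt{\pi}\rho^{1/2}\partial_{\theta}\eta_{n}$ and $\rho=1/(4\pi\sin^{2}\theta)$, this shows that
\begin{equation*}
u_{n}(t,\cdot)=\sum_{k=1}^{n}\tilde{\eta}_{k}^{(o)}(t)\,\varphi_{k},
\end{equation*}
where the system $\{\varphi_{k}\}_{k\geq 1}$ coincides, up to a fixed normalization constant, with $\{\sin(k+1)\theta\}_{k\geq 1}$ and is orthogonal in $L^{2}(\mathbb{T})$. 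Differentiating this expansion in time preserves its structure, so the Fourier coefficients of $\partial_{t}u_{n}$ and $\partial_{t}^{2}u_{n}$ with respect to $\{\varphi_{k}\}$ are precisely $\frac{d}{dt}\tilde{\eta}_{k}^{(o)}(t)$ and $\frac{d^{2}}{dt^{2}}\tilde{\eta}_{k}^{(o)}(t)$, respectively.

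Second, I would apply Parseval's identity, together with the standard equivalence $\|w\|_{H^{m}}^{2}\sim\sum_{k}(k+1)^{2m}|\hat{w}(k)|^{2}$ for functions on $\mathbb{T}$. From \ref{epthemu} this yields
\begin{equation*}
\sum_{k=1}^{n}(k+1)^{2m}\bigl(\tilde{\eta}_{k}^{(o)}(t)\bigr)^{2}\leq C(\mathbb{T},m,\eta_{0},T)
\end{equation*}
uniformly in $n$ and in $t\in[0,T]$. Since every nonnegative summand is bounded by the total, I conclude $(k+1)^{m}|\tilde{\eta}_{k}^{(o)}(t)|\lesssim 1$, which produces \ref{etak}. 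The identical argument, applied to $\partial_{t}u_{n}$ via \ref{eptm1u} and to $\partial_{t}^{2}u_{n}$ via \ref{epttm2u}, yields \ref{dtetak} and \ref{dttetak}.

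No substantive obstacle should arise; the argument is a bookkeeping exercise that converts the Sobolev bounds on $u_{n}$ and its time derivatives into pointwise Fourier decay. The only care required is to verify the orthogonality (up to constants) of the effective system $\{\varphi_{k}\}$ and to track parity consistently when passing from $\eta_{n}$ to $u_{n}$, both of which follow transparently from the identity in Lemma \ref{lebasis}.
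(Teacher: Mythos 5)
Your proposal is correct and is essentially the argument the paper intends: the paper simply asserts that the corollary follows from the uniform bounds \ref{etam}--\ref{t2etam}, and your Parseval/orthogonality bookkeeping (using $u_{n}=\sum_{k}\tilde{\eta}_{k}^{(o)}(t)\sin(k+1)\theta$ so that $\|\partial_{\theta}^{m}u_{n}\|_{L^{2}}^{2}=\pi\sum_{k}(k+1)^{2m}(\tilde{\eta}_{k}^{(o)})^{2}$, and likewise for the time derivatives) is exactly the omitted detail.
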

	
	{\bf Step 3. Convergence of the approximate solutions.}
	
	Combining \ref{epthemu}, \ref{eptm1u} and \ref{epttm2u}, there exists a subsequence $\{u_{n_{l}}\}_{n_{l}=1}^{\infty}\subset\{u_{n}\}_{n=1}^{\infty}$ such that $u_{n_{l}}$ converges to $u$ weakly in $L^{2}([0,T];H^{m}(\mathbb{T}))$, $\partial_{t}u_{n_{l}}$ converges to $\partial_{t}u$ weakly in $L^{2}([0,T];H^{m-1}(\mathbb{T}))$ and $\partial_{t}^{2}u_{n_{l}}$ converges to $\partial_{t}^{2}u$ weakly in $L^{2}([0,T];H^{m-2}(\mathbb{T}))$. For simplicity, we set $n_{l}=n.$
	Fix a positive integer $N$ and choose a function $\phi\in C^{1}([0,T];C^{2}(\mathbb{T}))$ such that
	\begin{align}\label{phi}
		\phi = \sum_{k=1}^{N}a_{k}^{(o)}(t)\sin k\theta,
	\end{align}
	where $a_{k}^{(o)}(t)(k=1,2,\cdots,N)$ is a smooth function. And we choose $n\geq N,$ multiply \ref{un} by $a_{k}^{(o)}(t)$, sum up over $k=0,1,2,\cdots,n,$ and integrate with respect to $t$, it is easy to get that
	\begin{align}\label{intun}
		\int_{0}^{T}\left\langle\partial_{t}u_{n}(t),\phi(t)\right\rangle + \left\langle L_{1}(u_{n})(t), \phi(t)\right\rangle dt = 0.
	\end{align}
	Since $\{u_{n}\}_{n=1}^{\infty}$ converges to $u$ weakly in $L^{2}([0,T];H^{m}(\mathbb{T}))$ and $\{\partial_{t}u_{n}\}_{n=1}^{\infty}$ converges to $\partial_{t}u$ weakly in $L^{2}([0,T];H^{m-1}(\mathbb{T}))$, it can be concluded that
	\begin{align}\label{intphi}
		\int_{0}^{T}\left\langle\partial_{t}u(t),\phi(t)\right\rangle + \left\langle L_{1}(u)(t), \phi(t)\right\rangle dt = 0.
	\end{align}
	And we can obtain \ref{intphi} for all functions $\phi\in C^{1}([0,T];C^{2}(\mathbb{T}))$, because functions of the form \ref{phi} are dense in this space.
	Combining estimates \ref{etak}-\ref{dttetak} of coefficients of $\eta_{n}$, we deduce that $\{u_{n}\}_{n=1}^{\infty}$ converges to $u=-\sqrt{\pi}\rho^{1/2}\partial_{\theta}\eta$ strongly in $C(0,T;H^{m}(\mathbb{T}))$.
	
	{\bf Step 4. Uniqueness.}
	
	If both $\eta_{1}$ and $\eta_{2}$ are solutions to \ref{etav}, it can be obtained that
	\begin{align}
		\|\rho^{1/2}\partial_{\theta}\eta_{1}-\rho^{1/2}\partial_{\theta}\eta_{2}\|_{L^{2}} \leq\ e^{CT}\|\rho^{1/2}\partial_{\theta}\eta_{0}-\rho^{1/2}\partial_{\theta}\eta_{0}\|_{L^{2}} =0,
	\end{align}
	which means that the solution to \ref{etav} is unique, and the proof of Theorem \ref{the existence} is finished.
	
	\section{Instability of  the linearized equation around $-\sin 2\theta$}\label{sec linear instability}
	In this section, we give the proof of Theorems \ref{the linear instability} and Corollary \ref{coro linear instability}.  We first demonstrate that the coefficients of the solution $\eta = \sum_{k\geq 1}\tilde{\eta}_{k}(t)\tilde{e}_{k}^{(o)}$ obtained in Theorem \ref{the existence} can be expressed as an infinite-dimensional ordinary system \ref{dteta}. Then we  derive a second-order differential inequality \ref{sodi} concerning $\|\eta\|_{\mathcal{H}_{DW}}$. Finally we prove that $\|\eta\|_{\mathcal{H}_{DW}}\ (\eta\neq 0)$ will grow exponentially with time under appropriate initial data, which implies the instability of the linearized equation \ref{etav}.
	
	Before proving Theorem \ref{the linear instability}, we state a lemma that will be used later. Its proof is deferred to the Appendix.
	\begin{lemma}\label{lepsk}
		For any positive integer $k$, the matrix
		\begin{equation}\label{matrixAk}
			A_{k}=\left(\begin{array}{lc}
				a_{k} & \varepsilon_{k} \\
				\varepsilon_{k} & a_{k+2}
			\end{array}\right)
		\end{equation}
		is a positive definite matrix,
		where
		\begin{align}
			a_{k} = \left(-d_{k+2} + d_{k}\right)^{2} = \left(-\frac{1}{2} - \frac{2k^{2}-4k-8}{(k+2)^{2}k^{2}}\right)^2
		\end{align}
		and
		\begin{align}\label{epsk}
			\varepsilon_{k} = -2d_{k+2}^{2}+d_{k}d_{k+2}+ d_{k+2}d_{k+4}
			=\frac{-2k^{3}+32k+32}{(k+2)^{4}(k+4)}.
		\end{align}
		
		Furthermore, there exist two absolute constants $\lambda_{1},\lambda_{2} > 0$ such that the eigenvalues $\lambda_{k}^{1},\ \lambda_{k}^2$ of $A_{k}$ and coefficients $a_{k}$ satisfying
		\begin{align}
			0 < \lambda_{1} < \lambda_{inf}\leq a_{k},\,\lambda_{k}^{1},\, \lambda_{k}^{2} \leq \lambda_{\sup} < \lambda_{2}
		\end{align}
		for $k\geq 1$, where
		\begin{align}
			\lambda_{k}^{1}=\frac{a_{k}+a_{k+2}-\sqrt{(a_{k}-a_{k+2})^{2}+4\varepsilon_{k}^{2}}}{2},
		\end{align}
		\begin{align}
			\lambda_{k}^{2}=\frac{a_{k}+a_{k+2}+\sqrt{(a_{k}-a_{k+2})^{2}+4\varepsilon_{k}^{2}}}{2}
		\end{align}
		and
		\begin{align*}
			\lambda_{\sup} = \mathop{\sup}_{k\geq1}\{a_{k}, \lambda_{k}^{1}, \lambda_{k}^{2} \},\ \lambda_{inf} = \mathop{\inf}_{k\geq1}\{a_{k}, \lambda_{k}^{1}, \lambda_{k}^{2}\}.
		\end{align*}
	\end{lemma}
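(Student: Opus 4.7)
The plan is to establish Lemma \ref{lepsk} by a combination of explicit asymptotic analysis for large $k$ and a finite case-check for small $k$. First I would simplify the two building blocks. From the formula $-d_{k+2}+d_k = -\frac{k^4+4k^3+8k^2-8k-16}{2(k+2)^2 k^2}$ given in the introduction, one can rewrite
\begin{align*}
-d_{k+2}+d_k = -\frac{1}{2} - \frac{2k^2-4k-8}{(k+2)^2 k^2},
\end{align*}
so that $a_k = (-d_{k+2}+d_k)^2 \to \tfrac14$ as $k\to\infty$. A similar expansion of $\varepsilon_k = \frac{-2k^3+32k+32}{(k+2)^4(k+4)}$ gives $\varepsilon_k = O(k^{-2})$, so $\varepsilon_k \to 0$. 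This already tells me that for $k$ large the matrix $A_k$ is a small perturbation of the diagonal matrix $\mathrm{diag}(a_k, a_{k+2})$, which is close to $\tfrac14 I_2$, so positive definiteness and the desired uniform bounds will hold for $k$ beyond some threshold $k_0$.

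Second, I would prove positive definiteness by the Sylvester criterion: show $a_k>0$ for every $k\ge 1$, and $\det A_k = a_k a_{k+2} - \varepsilon_k^2 > 0$. Positivity of $a_k$ is automatic since $a_k$ is a square, provided we know $-d_{k+2}+d_k \ne 0$; this can be checked by observing that the numerator $k^4+4k^3+8k^2-8k-16$ has no positive integer roots (a short direct verification for $k=1,2$ together with monotonicity of $k^4$ for $k\ge 3$). For the determinant, I would combine the algebraic identity
\begin{align*}
a_k a_{k+2} - \varepsilon_k^2 = \bigl[(-d_{k+2}+d_k)(-d_{k+4}+d_{k+2}) - \varepsilon_k\bigr]\bigl[(-d_{k+2}+d_k)(-d_{k+4}+d_{k+2}) + \varepsilon_k\bigr]
\end{align*}
with the explicit rational expressions above, reducing $\det A_k$ to a rational function of $k$ whose numerator and denominator can be verified to be positive for all $k\ge 1$ by polynomial sign analysis.

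Third, for the uniform two-sided bound I would use the explicit eigenvalue formulas for a $2\times 2$ symmetric matrix,
\begin{align*}
\lambda_k^{1,2} = \frac{a_k + a_{k+2} \mp \sqrt{(a_k-a_{k+2})^2 + 4\varepsilon_k^2}}{2}.
\end{align*}
Since $a_k, a_{k+2}, \varepsilon_k$ are all continuous rational functions of $k$ with finite nonzero limits at infinity ($a_k \to \tfrac14$, $\varepsilon_k \to 0$), the sequences $\{a_k\}, \{\lambda_k^1\}, \{\lambda_k^2\}$ converge to $\tfrac14$, hence are bounded and bounded away from $0$ for $k\ge k_0$. I would then finish by a direct check of the finitely many small values $k=1,2,\dots,k_0$, computing $a_k$, $\lambda_k^1$, $\lambda_k^2$ explicitly and confirming they lie in a strictly positive range. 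Setting $\lambda_{\inf}$ and $\lambda_{\sup}$ to the infimum and supremum over this finite plus tail collection yields uniform constants $0<\lambda_1<\lambda_{\inf}\le \lambda_{\sup}<\lambda_2$.

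The main obstacle I anticipate is the algebraic bookkeeping in step two: the expression $a_k a_{k+2} - \varepsilon_k^2$ is a ratio of polynomials of fairly large degree, and exhibiting its positivity cleanly for all $k\ge 1$ (rather than just asymptotically) will require careful factoring, or equivalently a careful numerical verification on a finite initial segment combined with an asymptotic tail estimate. The Appendix, as promised in the paper, is the natural place to record these explicit bounds and the numerical verification for small $k$; I would organize the argument so that the symbolic manipulation handles $k$ larger than some small $k_0$, and a table of numerical values handles $k\le k_0$.
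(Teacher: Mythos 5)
Your proposal is correct and follows essentially the same route as the paper's Appendix: explicit rational expressions for $a_k$ and $\varepsilon_k$, the limits $a_k\to\tfrac14$ and $\varepsilon_k\to 0$, positive definiteness via $a_k>0$ and $a_ka_{k+2}-\varepsilon_k^2>0$, and uniform eigenvalue bounds from the closed-form $2\times 2$ eigenvalue formulas combined with checks at small $k$. The only cosmetic difference is that the paper replaces your ``threshold $k_0$ plus finite table'' step by a monotonicity analysis of the function $f(x)=-\tfrac12-2\frac{x^2-2x-4}{(x+2)^2x^2}$ (locating its critical point in $(5,6)$), which yields the explicit global bounds $\tfrac38\le|d_k-d_{k+2}|\le\tfrac{11}{18}$, $|\varepsilon_k|\le\tfrac{62}{405}$, and hence $\lambda_k^1>\tfrac1{50}$, $\lambda_k^2<\tfrac35$.
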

	\begin{remark}
		It is easy to prove that $\lambda_{k}^{1},\lambda_{k}^{2}$ and $a_{k}$ converge to $\frac{1}{4}$ as $k\rightarrow\infty$. However, the proof of Lemma \ref{lepsk} involves straightforward but tedious calculations using the explicit expressions for $\lambda_{k}^{1},\lambda_{k}^{2}$ and $a_{k}$. We provide a rigorous proof in the Appendix. Also, the numerical illustrations of $a_{k},\lambda_{k}^{1},\lambda_{k}^{2}$ are presented in Appendix.
	\end{remark}

	We now proceed to the proof of Theorem \ref{the linear instability}.
	\begin{proof}[Proof of Theorem \ref{the linear instability}]  Consider the solution obtained in Theorem \ref{the existence}, which can be expressed as  $\eta = \sum_{k\geq 1}\tilde{\eta}_{k}^{(o)}(t)\tilde{e}_{k}^{(o)}$. Note that although the coefficients $\tilde{\eta}_{k}^{(o)}(t)$ here is possibly not same as in \ref{etans}, the decay properties in Corollary \ref{coro} hold true for both of them due to the convergence of the approximate solutions and the uniqueness of the solutions.
		
		{\bf Step 1. The infinite dimensional ODE system.}
		
		Direct computations yield
		\begin{align}
			-Le_{k}^{(o)} = A_{k}e_{k+2}^{(o)} + B_{k}e_{k-2}^{(o)}, \quad k > 2,
		\end{align}
		where the linear operator $L$ is defined as in \ref{operator} and
		\begin{align}
			A_{k}=-\frac{(k-2)^{2}}{4k},\quad B_{k}=
			\frac{(k+2)(k-2)}{4k},\quad k > 2.
		\end{align}
		For $k=1$, one has
		\begin{align*}
			-Le_{1}^{(o)}=-\frac{1}{4} e_{3}^{(o)} +\frac{3}{4} e_{1}^{(o)}.
		\end{align*}
		For $k=2$, one has
		\begin{align*}
			-L{e_{2}^{(o)}} = 0.
		\end{align*}
		It follows that
		\begin{align*}
			-L\tilde{e}_{k}^{(o)} & = \frac{A_{k+2}}{k+2}e_{k+4}^{(o)}+\frac{B_{k+2}}{k+2}e_{k}^{(o)} - \frac{A_{k}}{k}e_{k+2}^{(o)} -\frac{B_{k}}{k}e_{k-2}^{(o)}\\
			&=-\frac{k^2}{4(k+2)^2}e_{k+4}^{(o)} + \frac{(k+4)k}{4(k+2)^2}e_k^{(o)}+\frac{(k-2)^2}{4k^2}e_{k+2}^{(o)} -\frac{(k+2)(k-2)}{4k^2}e_{k-2}^{(o)}\\
			&=-\frac{k^{2}(k+4)}{4(k+2)^{2}}\left(\frac{e_{k+4}^{(o)}}{k+4}-\frac{e_{k+2}^{(o)}}{k+2}\right) + \left[\frac{(k-2)^{2}(k+2)}{4k^{2}}-\frac{k^{2}(k+4)}{4(k+2)^{2}}\right]\left(\frac{e_{k+2}^{(o)}}{k+2}-\frac{e_{k}^{(o)}}{k}\right) \\
			&\quad + \frac{(k-2)^{2}(k+2)}{4k^{2}}\left(\frac{e_{k}^{(o)}}{k}-\frac{e_{k-2}^{(o)}}{k-2}\right)\\
			&=-\frac{k^{2}(k+4)}{4(k+2)^{2}}\tilde{e}_{k+2}^{(o)} + \left[\frac{(k-2)^{2}(k+2)}{4k^{2}}-\frac{k^{2}(k+4)}{4(k+2)^{2}}\right]\tilde{e}_{k}^{(o)} + \frac{(k-2)^{2}(k+2)}{4k^{2}}\tilde{e}_{k-2}^{(o)},
		\end{align*}
		which is
		\begin{align}\label{equ4.3}
			-L\tilde{e}_{k}^{(o)} = -d_{k+2}\tilde{e}_{k+2}^{(o)} + (-d_{k+2}+d_{k})\tilde{e}_{k}^{(o)} + d_{k}\tilde{e}_{k-2}^{(o)},
		\end{align}
		where
		\begin{align*}
			d_{k} = \frac{(k-2)^{2}(k+2)}{4k^{2}}
		\end{align*}
		and
		\begin{align}
			-d_{k+2} + d_{k} & = -\frac{k^{2}(k+4)}{4(k+2)^{2}} + \frac{(k-2)^{2}(k+2)}{4k^{2}}\nonumber\\
			& = -\frac{k^{4}+4k^{3}+8k^{2}-8k-16}{2(k+2)^{2}k^{2}}\\
			& = -\frac{1}{2} - \frac{2k^{2}-4k-8}{(k+2)^{2}k^{2}}.\nonumber
		\end{align}
		Note that $d_{2}=0$. The above equality holds true for all $k\geq 2.$ For $k=1$, we have
		\begin{align}
			-L \tilde{e}_{1}^{(o)}=-d_{3} \tilde{e}_{3}^{(o)}+\left(-d_{3}+d_{1}\right) \tilde{e}_{1}^{(o)}.
		\end{align}
		
		In view of Corollary \ref{coro}, the equation \ref{etav} can be expressed as the following infinite-dimensional ordinary differential equation (ODE) system
		\begin{align}\label{dteta}
			\frac{d}{dt}\tilde{\eta}_{k}^{(o)}(t) = -d_{k}\tilde{\eta}_{k-2}^{(o)}(t) + (d_{k}-d_{k+2})\tilde{\eta}_{k}^{(o)}(t) + d_{k+2}\tilde{\eta}_{k+2}^{(o)}(t),\quad k\geq 1.
		\end{align}
		Here, $\tilde{\eta}_{-1}^{(o)}(t)$  and $\tilde{\eta}_{0}^{(o)}(t)$ are understood to be 0.\\
		
		{\bf Step 2. The quadratic form on $\tilde{\eta}_{k}^{(o)}(t)$ and $\tilde{\eta}_{k+2}^{(o)}(t)$.}
		
		In this step, we demonstrate that the non-trivial solution $\|\eta\|_{\mathcal{H}_{DW}}$ obtained in Theorem \ref{the existence} satisfies a second-order ordinary differential inequality.
		
		Applying the estimate of $\tilde{\eta}_{k}^{(o)}(t)$ from Corollary \ref{coro} yields
		\begin{align}
			|\frac{d}{dt}\left(d_{k}-d_{k+2}\right)\left(\tilde{\eta}_{k}^{(o)}(t)\right)^{2}|\leq \frac{C}{k^{2m-1}}, \quad
		\end{align}
		where $0\leq t < \infty$, $k\geq 1$ and $m>3$ an integer. It follows that
		\begin{align}
			\frac{d}{dt}\left\langle-L\eta,\eta\right\rangle_{\rho} &=\frac{d}{dt}\sum_{k\geq 1}\left(d_{k}-d_{k+2}\right)\left(\tilde{\eta}_{k}^{(o)}(t)\right)^{2}\nonumber\\
			&=\sum_{k\geq 1}\frac{d}{dt}\left(d_{k}-d_{k+2}\right)\left(\tilde{\eta}_{k}^{(o)}(t)\right)^{2}.\nonumber
		\end{align}

		We multiply equation \ref{dteta} by $(-d_{k+2}+d_{k})\tilde{\eta}_{k}^{(o)}(t)$ to obtain
		\begin{align}\label{dteta2}
			\frac{1}{2}\frac{d}{dt}(d_{k}-d_{k+2})\left(\tilde{\eta}_{k}^{(o)}(t)\right)^{2} & = -d_{k}(d_{k}-d_{k+2})\tilde{\eta}_{k-2}^{(o)}(t)\tilde{\eta}_{k}^{(o)}(t) + (d_{k}-d_{k+2})^{2}\left(\tilde{\eta}_{k}^{(o)}(t)\right)^{2}\nonumber\\
			&\quad +d_{k+2}(d_{k}-d_{k+2})\tilde{\eta}_{k}^{(o)}(t)\tilde{\eta}_{k+2}^{(o)}(t)
		\end{align}
		for $k\geq 1.$
		
		Considering the sum $S_{n}=\sum_{k=1}^{n}\frac{d}{dt}\left(d_{k}-d_{k+2}\right)\left(\tilde{\eta}_{k}^{(o)}(t)\right)^{2}$ and combing \ref{dteta2}, we obtain
		\begin{align}\label{dtL}
			S_{n}=&2\sum_{k=1}^{n}\left[-d_{k}(d_{k}-d_{k+2})\tilde{\eta}_{k-2}^{(o)}\tilde{\eta}_{k}^{(o)} + (d_{k}-d_{k+2})^{2}\left(\tilde{\eta}_{k}^{(o)}\right)^{2} +d_{k+2}(d_{k}-d_{k+2})\tilde{\eta}_{k}^{(o)}\tilde{\eta}_{k+2}^{(o)}\right]\nonumber\\
			=& \left(-d_{3} + d_{1}\right)^{2}\left(\tilde{\eta}_{1}^{(o)}\right)^{2}\nonumber\\
			& + \left(-d_{4} + d_{2}\right)^{2}\left(\tilde{\eta}_{2}^{(o)}\right)^{2}\nonumber\\
			& +\left(-d_{3} + d_{1}\right)^{2}\left(\tilde{\eta}_{1}^{(o)}\right)^{2} + 2\left(-2d_{3}^{2} + d_{1}d_{3} +d_{3}d_{5}\right)\tilde{\eta}_{1}^{(o)}\tilde{\eta}_{3}^{(o)} + \left(-d_{5} + d_{3}\right)^{2}\left(\tilde{\eta}_{3}^{(o)}\right)^{2}\nonumber\\
			& +\left(-d_{4} + d_{2}\right)^{2}\left(\tilde{\eta}_{2}^{(o)}\right)^{2} + 2\left(-2d_{4}^{2} + d_{2}d_{4} +d_{4}d_{6}\right)\tilde{\eta}_{2}^{(o)}\tilde{\eta}_{4}^{(o)} + \left(-d_{6} + d_{4}\right)^{2}\left(\tilde{\eta}_{6}^{(o)}\right)^{2}\nonumber\\
			&\cdots\\
			& +\left(-d_{n-1} + d_{n-3}\right)^{2}\left(\tilde{\eta}_{n-3}^{(o)}\right)^{2} + 2\left(-2d_{n-1}^{2} + d_{n-3}d_{n-1} +d_{n-1}d_{n+1}\right)\tilde{\eta}_{n-3}^{(o)}\tilde{\eta}_{n-1}^{(o)}\nonumber\\
			& + \left(-d_{n+1} + d_{n-1}\right)^{2}\left(\tilde{\eta}_{n-1}^{(o)}\right)^{2}\nonumber\\
			& +\left(-d_{n+1} + d_{n-1}\right)^{2}\left(\tilde{\eta}_{n-1}^{(o)}\right)^{2} + 2d_{n+1}\left(-d_{n+1}+d_{n-1}\right)\tilde{\eta}_{n-1}^{(o)}\tilde{\eta}_{n+1}^{(o)}\nonumber\\
			&+\left(-d_{n-2} + d_{n}\right)^{2}\left(\tilde{\eta}_{n-2}^{(o)}\right)^{2} + 2\left(-2d_{n}^{2} + d_{n-2}d_{n} +d_{n}d_{n+2}\right)\tilde{\eta}_{n-2}^{(o)}\tilde{\eta}_{n}^{(o)} + \left(-d_{n+2} + d_{n}\right)^{2}\left(\tilde{\eta}_{n}^{(o)}\right)^{2}\nonumber\\
			& +\left(-d_{n+2} + d_{n}\right)^{2}\left(\tilde{\eta}_{n}^{(o)}\right)^{2} + 2d_{n+2}\left(-d_{n+2}+d_{n}\right)\tilde{\eta}_{n}^{(o)}\tilde{\eta}_{n+2}^{(o)}\nonumber\\
			:= & \left(-d_{3} + d_{1}\right)^{2}\left(\tilde{\eta}_{1}^{(o)}\right)^{2}  + \left(-d_{4} + d_{2}\right)^{2}\left(\tilde{\eta}_{2}^{(o)}\right)^{2} +\sum_{k=1}^{n-2}f_{k}\nonumber \\
			&+\left(-d_{n+1} + d_{n-1}\right)^{2}\left(\tilde{\eta}_{n-1}^{(o)}\right)^{2}  + \left(-d_{n+2} + d_{n}\right)^{2}\left(\tilde{\eta}_{n}^{(o)}\right)^{2}+ R_{n-1}+R_{n}.\nonumber
		\end{align}
		Here the quadratic form  $f_{k}$ is defined as
		\begin{align}\label{form}
			f_{k} = (-d_{k+2}+d_{k})^2\left(\tilde{\eta}_{k}^{(o)}\right)^{2} + 2(-2d_{k+2}^{2}+d_{k}d_{k+2}+ d_{k+2}d_{k+4})\tilde{\eta}_{k}^{(o)}\tilde{\eta}_{k+2}^{(o)}+(-d_{k+4}+d_{k+2})^2\left(\tilde{\eta}_{k+2}^{(o)}\right)^{2}
		\end{align}
		for $k=1,2,\cdots,n-2.$ The remainder term $R_{k}$ is given by
		\begin{align}\label{Rk}
			R_{k}(t) = 2d_{k+2}(-d_{k+2}+d_{k})\tilde{\eta}_{k}^{(o)}(t)\tilde{\eta}_{k+2}^{(o)}(t)
		\end{align}
		for $k=n-1,n$. Applying the estimates from Corollary \ref{coro}, we obtain
		\begin{align}\label{cRK}
			|R_{k}(t)|\leq \frac{C}{k^{2m-1}},
		\end{align}
		where C is a constant depending on $m, \mathbb{T}$ and the initial data $\eta_{0}$ for $0\leq t < \infty$. Consequently, both $R_{n-1}(t)$ and $ R_{n}(t)$ tend to 0 as $n\rightarrow \infty$.
		
		Using Lemma \ref{lepsk}, we derive the following inequality
		\begin{align}\label{cfk}
			\lambda_{inf}\left[\left(\tilde{\eta}_{k}^{(o)}\right)^{2}+\left(\tilde{\eta}_{k+2}^{(o)}\right)^{2}\right]\leq f_{k}\leq \lambda_{\sup}\left[\left(\tilde{\eta}_{k}^{(o)}\right)^{2}+\left(\tilde{\eta}_{k+2}^{(o)}\right)^{2}\right]
		\end{align}
		for $k\geq 1$.
		Substituting \ref{cfk} into \ref{dtL}, yields
		\begin{align}
			S_{n}
			\leq & \left(-d_{3} + d_{1}\right)^{2}\left(\tilde\eta_{1}^{(o)}\right)^{2} + \left(-d_{4} + d_{2}\right)^{2}\left(\tilde\eta_{2}^{(o)}\right)^{2} + \sum_{k=1}^{n-2}\lambda_{\sup}\left[\left(\tilde{\eta}_{k}^{(o)}\right)^{2}+\left(\tilde{\eta}_{k+2}^{(o)}\right)^{2}\right]\nonumber\\
			&+\left(-d_{n+1} + d_{n-1}\right)^{2}\left(\tilde{\eta}_{n-1}^{(o)}\right)^{2} +\left(-d_{n+2} + d_{n}\right)^{2}\left(\tilde{\eta}_{n}^{(o)}\right)^{2} + R_{n-1} + R_{n}\\
			\leq&\ 2\lambda_{\sup}\sum_{k=1}^{n}\left(\tilde{\eta}_{k}^{(o)}\right)^{2} + R_{n-1} + R_{n}\nonumber
		\end{align}
		and
		\begin{align}
			S_{n} \geq 2\sum_{k=1}^{n}\lambda_{inf}\left(\tilde{\eta}_{k}^{(o)}\right)^{2}  + R_{n-1} + R_{n}.
		\end{align}
		
		Consequently, it follows that
		\begin{align}
			2\lambda_{inf}\sum_{k=1}^{n}\left(\tilde{\eta}_{k}^{(o)}\right)^{2} + R_{n-1} + R_{n}\leq S_{n}\leq 2\lambda_{\sup}\sum_{k=1}^{n}\left(\tilde{\eta}_{k}^{(o)}\right)^{2}+R_{n-1} + R_{n},
		\end{align}
		which leads to a second-order ordinary differential inequality concerning $\|\eta(t)\|_{\mathcal{H}_{DW}}^{2}$
		\begin{align}\label{sodi}
			4\lambda_{1}\|\eta(t)\|_{\mathcal{H}_{DW}}^{2}< \frac{d^{2}}{dt^{2}}\|\eta(t)\|_{\mathcal{H}_{DW}}^{2}<4\lambda_{2}\|\eta(t)\|_{\mathcal{H}_{DW}}^{2},
		\end{align}
		as $n\rightarrow\infty$ for $0\leq t\leq T$, where $0<\lambda_1<\lambda_2$ are two positive constants as in Lemma \ref{lepsk} and we have used the decay property of $R_{k}$ in \ref{cRK} for $k=1,2,\cdots$ and $T>0$ is any positive constant.

		{\bf Step 3. The second-ordinary differential inequality.}
		
		In this step, we will apply the differential inequality \eqref{sodi} and the comparison theorem (Lemma \ref{Comp}) to finish the proof of Theorem \ref{the linear instability}. To this end, we first consider
		\begin{equation}\label{uode}
			\left\{\begin{split}
				&u''(t) = 4\lambda_{i}u(t),\ t\geq 0,\\
				&u(0)=0,\ u'(0)=1
			\end{split}\right.
		\end{equation}
		for $i=1,2$. The solution of \ref{uode} is given by
		\begin{align}
			u(t) = \frac{1}{4\sqrt{\lambda_{i}}}e^{2\sqrt{\lambda_{i}}t} - \frac{1}{4\sqrt{\lambda_{i}}}e^{-2\sqrt{\lambda_{i}}t} > 0
		\end{align}
		for $t > 0$ with $i=1,2$.
		
		Then we solve
		\begin{equation}\label{uode2}
			\left\{\begin{split}
				&y''(t) = 4\lambda_{i}y(t),\ t\geq 0,\\
				&y(0)=\left\langle\eta_0, \eta_0\right\rangle_{\rho},\ y'(0)=2\left\langle -L \eta_0, \eta_0\right\rangle_{\rho},
			\end{split}\right.
		\end{equation}
		for $i=1,2$. The solution of \ref{uode2} is given by
		\begin{align*}
			y(t)=J_{i}(t):= \frac{\left\langle\eta_0, \eta_0\right\rangle_{\rho} + \frac{1}{\sqrt{ \lambda_i}}\left\langle -L \eta_0, \eta_0\right\rangle_{\rho}}{2} e^{2 \sqrt{ \lambda_{i} }t} + \frac{\left\langle\eta_0, \eta_0\right\rangle_{\rho} - \frac{1}{\sqrt{ \lambda_i}}\left\langle -L \eta_0, \eta_0\right\rangle_{\rho}}{2} e^{-{2 \sqrt{ \lambda_{i} }t}}
		\end{align*}
		for $i=1,2$, respectively.

		Clearly, for $t>0$, we have
		\begin{align*}
			J_{i}(t) = \frac{1}{2}\left\langle\eta_{0}, \eta_{0}\right\rangle_{\rho}\cosh(2\sqrt{\lambda_{i}}t) + \frac{1}{2\sqrt{\lambda_{i}}}\left\langle-L \eta_{0}, \eta_{0}\right\rangle_{\rho}\sinh (2\sqrt{\lambda_{i}}t) > 0,
		\end{align*}
		provided that $\left\langle-L \eta_0, \eta_0\right\rangle_{\rho} \geq -\sqrt{\lambda_{i}}\left\langle \eta_{0}, \eta_{0}\right\rangle_{\rho}$ for $i=1,2$ and $\eta_{0}\neq 0$.
		
		Observe that
		\begin{align*}
			\cosh x=\frac{e^x+e^{-x}}{2},\ \sinh x=\frac{e^x-e^{-x}}{2}
		\end{align*}	
		and that both $\cosh x$ and $(\sinh x)/x$ are strictly increasing functions for $x>0$. Therefore, it follows that
		\begin{align*}
			J_{2}(t)-J_{1}(t) =& \frac{1}{2}\left\langle \eta_{0}, \eta_{0}\right\rangle_{\rho}\cosh (2\sqrt{\lambda_{2}}t) + \frac{1}{2\sqrt{\lambda_{2}}}\left\langle-L \eta_0, \eta_0\right\rangle_{\rho}\sinh (2\sqrt{\lambda_{2}}t)\\
			& -\frac{1}{2}\left\langle \eta_{0}, \eta_{0}\right\rangle_{\rho}\cosh (2\sqrt{\lambda_{1}}t) - \frac{1}{2\sqrt{\lambda_{1}}}\left\langle-L \eta_0, \eta_0\right\rangle_{\rho}\sinh (2\sqrt{\lambda_{1}}t)\\
			=&\frac{1}{2}\left\langle \eta_{0}, \eta_{0}\right\rangle_{\rho}\left(\cosh (2\sqrt{\lambda_{2}}t) - \cosh (2\sqrt{\lambda_{1}}t)\right)\\
			&+ \frac{1}{2}\left\langle-L \eta_0, \eta_0\right\rangle_{\rho}\left(\frac{1}{\sqrt{\lambda_{2}}}\sinh (2\sqrt{\lambda_{2}}t) - \frac{1}{\sqrt{\lambda_{1}}}\sinh (2\sqrt{\lambda_{1}}t)\right) > 0
		\end{align*}
		for $t> 0$, provided that $\left\langle-L \eta_0, \eta_0\right\rangle_{\rho}\geq 0\ (\eta_{0}\neq 0)$.
		
		Applying Lemma \ref{Comp}, we obtain
		\begin{align}\label{etaine}
			J_{1}(t) < \|\eta(t)\|_{\mathcal{H}_{DW}}^{2} < J_{2}(t)
		\end{align}
		for $t>0$.
		The proof of Theorem \ref{the linear instability} is finished.
	\end{proof}
	
	Next, we give the proof of Corollary \ref{coro linear instability}.
	\begin{proof}[Proof of Corollary \ref{coro linear instability}]
		It suffices to prove that the initial data presented in (i) and (ii) satisfy the conditions in Theorem  \ref{the linear instability}.
		
		If $\eta_{0} = a_{1}\tilde{e}_{1}^{(o)}(a_{1}\neq 0)$, then $\eta_{0}\in\mathcal{H}_{DW}$ and $\rho^{1/2}\partial_{\theta}\eta_{0}\in H^{m}$ with $m>3$. Moreover, it yields
		\begin{align}
			\left\langle-L\eta_{0},\eta_{0}\right\rangle_{\rho} &= (-d_{3} + d_{1})a_{1}^{2}\nonumber\\
			&=\frac{11}{18} a_{1}^{2} > 0.
		\end{align}
		
		If $\eta_{0} = a_{1}\tilde{e}_{1}^{(o)} + a_{k}\tilde{e}_{k}^{(o)}$ with $a_{1}, a_{k}\neq 0$ for $k\geq 2$, then direct calculations show that  $\eta_{0}\in\mathcal{H}_{DW}$ and $\rho^{1/2}\partial_{\theta}\eta_{0}\in H^{m}$ with $m>3$. Moreover, it yields
		\begin{align}
			\left\langle-L\eta_{0},\eta_{0}\right\rangle_{\rho}
			& = (-d_{3} + d_{1})a_{1}^{2} + (-d_{k+2} + d_{k})a_{k}^{2}\nonumber\\
			&= \frac{11}{18} a_{1}^{2}+(-d_{k+2}+d_{k})a_{k}^{2},
		\end{align}
		which implies $\left\langle-L\eta_{0},\eta_{0}\right\rangle_{\rho}\geq0$ provided
		\begin{align}
			0<\ a_{k}^{2}\leq\ \frac{11}{18(d_{k+2}-d_{k})}a_{1}^{2}.
		\end{align}
		The proof of Corollary \ref{coro linear instability} is finished.
	\end{proof}

	\section{nonlinear instability}\label{sec nonlinear instability}
	
	In this section, we give the proof of Theorem \ref{the nonlinear instability} based on the linear instability  results obtained in Section 4. Recall that the nonlinear problem for $\eta = \omega + \sin2\theta$ can be written as
	\begin{equation}\label{nonlinear equ}
		\left\{\begin{split}
			&\partial_{t}\eta + L\eta =N(\eta),\\
			&\partial_{\theta}v = H\eta,\\
			&\eta(0,\theta)=\eta_{0}(\theta),\quad v(t,0) = 0,
		\end{split}\right.
	\end{equation}
	where
	\begin{align}
		&L\eta = \frac{1}{2}\sin2\theta\partial_{\theta}\eta - \cos2\theta\eta + \sin2\theta H\eta - 2\cos2\theta v,\\
		&N(\eta)=\partial_{\theta}v\eta - v\partial_{\theta}\eta.
	\end{align}
	
	\subsection{Existence and uniqueness of the nonlinear equation}\label{nonlinear existence}
	
	To begin with, we first establish the local well-posedness of the classical solution to  \ref{nonlinear equ} with odd initial data, which is
	
	\begin{lemma}\label{the local existence}
		Let  $\|\rho^{1/2}\partial_{\theta}\eta_{0}\|_{H^{m}}=\delta_{0}.$ Then there exists $T_{0}>0$ and a unique classical odd solution $\eta$ to \ref{nonlinear equ} such that
		\begin{align}
			\rho^{1/2}\partial_{\theta}\eta\in C([0,T_{0}];H^{m}(\mathbb{T}))\cap C^{2}([0,T_{0}];H^{m-2}(\mathbb{T})).
		\end{align}
		Moreover, setting $u:=\rho^{1/2}\partial_{\theta}\eta,$ the solution  satisfies
		\begin{align}
			\mathop{\sup}_{0\leq t\leq T_0}\|u(t)\|_{H^{m}},\mathop{\sup}_{0\leq t\leq T_0}\|\partial_{t}u(t)\|_{H^{m-1}},\mathop{\sup}_{0\leq t\leq T_0}\|\partial_{t}^{2}u(t)\|_{H^{m-2}}\leq C(T_0)\delta_{0},
		\end{align}
		where $T_0=\frac{1}{C_{{1}}}\ln(1+\frac{C_{{1}}}{2C_{{2}}\|u_{0}\|_{H^{m}}})$ with $C_{1},C_{2}>0$ being absolute constants (see \ref{C12} for details).
	\end{lemma}

	In addition, for general initial data, we have the following result.
	\begin{lemma}\label{the general existence}
		Assume that $\eta_{0}\in H^{m}(\mathbb{T})$. Then there exists $T > 0$ such that \ref{nonlinear equ} admits a unique classical solution satisfying
		\begin{align*}
			\eta\in C([0,T];H^{m}(\mathbb{T}))\cap C^{2}([0,T];H^{m-2}(\mathbb{T})).
		\end{align*}
	\end{lemma}
	
	Lemma \ref{the local existence} constitutes the first part of Theorem \ref{the nonlinear instability}. Its proof follows from Galerkin’s method, similar to the approach used in Theorem~\ref{the existence}. For convenience, we continue to denote the approximate solutions by $(\eta_{n},v_{n})$.Building on the linear analysis in Section \ref{exist}, the main task is to establish uniform estimates for the nonlinear terms. Since the derivative of the nonlinear term is
	\begin{align*}	\sqrt{\pi}\rho^{1/2}\partial_{\theta}N(\eta_{n})=\frac{1}{2\sin\theta}(\partial_{\theta}^{2}v_{n}\eta_{n}-v_{n}\partial_{\theta}^{2}\eta_{n}),
	\end{align*}
	which exhibits a stronger singularity, to derive uniform $H^m$ estimates for $\rho^{1/2} \partial_\theta \eta_n$, we rewrite $\rho^{1/2} \partial_\theta N(\eta_n)$ using the explicit form of the approximate solutions $\eta_n$.
	The proof of Lemma \ref{the general existence} is  derived by constructing approximate solutions through general basis $\{\sin k\theta, k\geq 1\}\cup\{\cos k\theta, k\geq 0\}$, which  is analogous to that of Lemma \ref{the local existence} and we omit the details here. We now present the proof of Lemma~\ref{the local existence}.
	
	\begin{proof}[Proof of Lemma \ref{the local existence}]
		The proof is based on Galerkin’s method and proceeds in several steps:
		
		\textbf{Step 1. Construction of the approximate solutions.}
		
		For a fixed positive integer $n$, we define
		\begin{align}
			&\eta_{n}=\sum_{k=1}^{n}\tilde{\eta}_{k}^{(o)}(t)\tilde{e}_{k}^{(o)},\label{etan expression}\\
			&\partial_{\theta}v_{n}=H\eta_{n}.\label{vn expression}
		\end{align}
		Our objective is to determine the coefficients $\tilde{\eta}_{k}^{(o)}(t)$ such that
		\begin{align}\label{etak ODE}
			\big\langle\partial_{t}\partial_{\theta}\eta_{n},\partial_{\theta}\tilde{e}_{k}^{(o)}\rho\big\rangle+\big\langle\partial_{\theta}L\eta_{n},\partial_{\theta}\tilde{e}_{k}^{(o)}\rho\big\rangle=\big\langle\partial_{\theta}N(\eta_{n}),\partial_{\theta}\tilde{e}_{k}^{(o)}\rho\big\rangle,
		\end{align}
		with the initial condition
		\begin{align}
			\big\langle\partial_{\theta}\eta_{0},\partial_{\theta}\tilde{e}_{k}^{(o)}\rho\big\rangle=\tilde{\eta}_{k}^{(o)}(0)
		\end{align}
		for  $k=1,2,\cdots,n$.                                                                                                                                                                                                                                                                                                                                                                                                                                                                                                                                                                                                                                       
		Following the notations introduced in Section \ref{sec linear instability}, we define
		\begin{align*}
			u_{n}=-\sqrt{\pi}\rho^{1/2}\partial_{\theta}\eta_{n}=\sum_{k=1}^{n}\tilde{\eta}_{k}^{(o)}(t)\sin(k+1)\theta.
		\end{align*}		
		To treat the nonlinear term, we introduce the notation
		\begin{align}\label{4161}
			N_{1}(u_{n}):=-\sqrt{\pi}\rho^{1/2}\partial_{\theta}N(\eta_{n})=\sqrt{\pi}\rho^{1/2}(v_{n}\partial_{\theta}^{2}\eta_{n}-\partial_{\theta}^{2}v_{n}\eta_{n}).
		\end{align}
		For the second term on the right-hand of \eqref{4161}, we  apply the fact that  $\partial_{\theta}v_{n}=H\eta_{n}$ to obtain
		\begin{align*}
			\sqrt{\pi}\rho^{1/2}\partial_{\theta}^{2}v_{n}=\frac{1}{2\sin\theta}\sum_{k=1}^{n}\tilde{\eta}_{k}^{(o)}(t)\left[\sin(k+2)\theta-\sin(k\theta)\right]=\sum_{k=1}^{n}\tilde{\eta}_{k}^{(o)}(t)\cos(k+1)\theta:=g_{n}.
		\end{align*}
		This implies that
		\begin{align}
			\|g_{n}\|_{H^{m}}=\|u_{n}\|_{H^{m}}
		\end{align}
		for any $m\geq 0$.
		
		Concerning the first term on the right-hand of \eqref{4161}, we represent $\sqrt{\pi}\rho^{1/2}v_{n}$ as an explicit  Fourier series.		Since $\eta_{n}$ and $v_{n}$ are odd functions, we use \ref{etan expression} and \ref{vn expression} to obtain
		
		\begin{align}\label{4162}
			v_{n}=\sum_{k=1}^{n+2}\frac{-\tilde{\eta}_{k-2}^{(o)}(t)+\tilde{\eta}_{k}^{(o)}(t)}{k^{2}}\sin k\theta,
		\end{align}
		where $\tilde{\eta}_{-1}^{(o)}(t), \tilde{\eta}_{0}^{(o)}(t),\tilde{\eta}_{n+1}^{(o)}(t)$ and $\tilde{\eta}_{n+2}^{(o)}(t)$ are understood to be zero.
		
		We now analyze the expression $\frac{\sin(k\theta)}{\sin\theta}.$ ~\\
		For $k=2l-1, l=1,2,\cdots$, we have
		\begin{align*}
			\frac{\sin k\theta}{\sin\theta}&=\frac{\sin\theta-\sin\theta+\sin(3\theta)-\cdots-\sin(2l-3)\theta+\sin(2l-1)\theta}{\sin\theta}\\
			&=1+2\sum_{j=1}^{l-1}\cos(2j\theta).
		\end{align*}
		On the other hand, if $k=2l, l=1,2,\cdots$, then
		\begin{align*}
			\frac{\sin k\theta}{\sin\theta}&=\frac{\sin(2\theta)-\sin(2\theta)+\sin(4\theta)-\cdots-\sin(2l-2)\theta+\sin(2l\theta)}{\sin\theta}\\
			&=2\sum_{j=1}^{l}\cos(2j-1)\theta.
		\end{align*}
		Without loss of generality, we consider the case $n=2l-1$ in \eqref{4162} to derive
		\begin{align*}
			\sqrt{\pi}\rho^{1/2}v_{n}=&\frac{1}{2}\sum_{k=1}^{l+1}\frac{-\tilde{\eta}_{2k-3}^{(o)}+\tilde{\eta}_{2k-1}^{(o)}}{(2k-1)^{2}} + \sum_{k=1}^{l}\cos(2k\theta)\left(\sum_{j\geq k}^{l}\frac{-\tilde{\eta}_{2j-1}^{(o)}+\tilde{\eta}_{2j+1}^{(o)}}{(2j+1)^{2}}\right)\\
			&+\sum_{k=1}^{l}\cos(2k-1)\theta\left(\sum_{j\geq k}^{l}\frac{-\tilde{\eta}_{2j-2}^{(o)}+\tilde{\eta}_{2j}^{(o)}}{(2j)^{2}}\right)\\
			:=&h_{n}.
		\end{align*}
		Thus, $N_{1}(u_{n})$ can be expressed as
		\begin{align*}
			N_{1}(u_{n})=-g_{n}\eta_{n}+h_{n}\partial_{\theta}^{2}\eta_{n}=-g_{n}\eta_{n}-2\sin\theta h_{n}\partial_{\theta}u_{n}-2\cos\theta h_{n}u_{n}.
		\end{align*}
		Define
		\begin{align*}
			L_{1}(u_{n}):=-\sqrt{\pi}\rho^{1/2}\partial_{\theta}L\eta_{n}.
		\end{align*}
		Then the ordinary differential equations \ref{etak ODE} are  equivalent to
		\begin{align}\label{un ODE}
			\left\langle\partial_{t}u_{n},\sin(k+1)\theta\right\rangle +\left\langle L_{1}(u_{n}),\sin(k+1)\theta\right\rangle=\left\langle N_{1}(u_{n}),\sin(k+1)\theta\right\rangle.
		\end{align}
		By the standard existence theory for ordinary differential equations, there exist a $T(k)>0$ which may depend on $k$ and a unique set of absolutely continuous functions $\tilde{\eta}_{k}^{(o)}(t) (k=1,2,\cdots,n)$ satisfying \ref{etan expression} and \ref{vn expression} for $0\leq t\leq T(k)$.

		\textbf{Step 2. Energy estimates.}
		
		The estimates for the linear operator $L$ have been established in the Section \ref{exist}, and  we now focus on the estimates for the nonlinear terms.
		
		{ \it \underline{$L^{2}$ estimate of $u_{n}.$}}
		
		Multiplying \ref{un ODE} by $\tilde{\eta}_{k}^{(o)}(t)$ and summing over $k=1,2,\cdots,n$, we obtain
		\begin{align*}
			\frac{1}{2}\frac{d}{dt}\|u_{n}\|_{L^{2}}^{2}=-\left\langle L_{1}(u_{n}),u_{n}\right\rangle+\left\langle N_{1}(u_{n}),u_{n}\right\rangle.
		\end{align*}
		Noticing that $\|g_{n}\|_{H^{m}}=\|u_{n}\|_{H^{m}}$ for  any $m\geq 0$, and applying Poincar\'{e}'s inequality and H\"{o}lder's inequality, we obtain
		\begin{align*}
			\left\langle N_{1}(u_{n}),u_{n}\right\rangle&\lesssim \|\eta_{n}\|_{L^{\infty}}\|g_{n}\|_{L^{2}}\|u_{n}\|_{L^{2}}+\left(\|\partial_{\theta}(\sin\theta h_{n})\|_{L^{\infty}}+\|h_{n}\|_{L^{\infty}}\right)\|u_{n}\|_{L^{2}}^2\\
			&\lesssim \|\partial_{\theta}\eta_{n}\|_{L^{2}}\|u_{n}\|_{L^{2}}^2+\left(\|\partial_{\theta}(\sin\theta h_{n})\|_{L^{\infty}}+\|h_{n}\|_{L^{\infty}}\right)\|u_{n}\|_{L^{2}}^2.
		\end{align*}
		To estimate $\|h_{n}\|_{L^{\infty}}=\big\|\frac{v_{n}}{2\sin\theta}\big\|_{L^{\infty}}$, we note that  $v_{n}$ is odd and periodic and hence $v_{n}(\pi)=v_{n}(0)$.  Since $\sin\theta\geq\frac{2}{\pi}\min\{\theta,\pi-\theta\}$ in $[0,\pi]$, we have $\|h_{n}\|_{L^{\infty}}=\big\|\frac{v_{n}}{\sin\theta}\big\|_{L^{\infty}}\lesssim\|\partial_{\theta}v_{n}\|_{L^{\infty}}$ by Lagrange's mean value theorem. Then applying \ref{vn expression}, Poincar\'{e}'s inequality and Lemma \ref{le hilbert estimate}, we obtain
		\begin{align}
			\|h_{n}\|_{L^{\infty}}\lesssim\|H(\partial_{\theta}\eta_{n})\|_{L^{2}}\lesssim\|\partial_{\theta}\eta_{n}\|_{L^{2}}\lesssim\|u_{n}\|_{L^{2}}
		\end{align}
		and
		\begin{align}
			\|\partial_{\theta}(\sin\theta h_{n})\|_{L^{\infty}}=\bigg\|\partial_{\theta}v_{n}-\cos\theta\frac{v_{n}}{\sin\theta}\bigg\|_{L^{\infty}}\lesssim\|\partial_{\theta}v_{n}\|_{L^{\infty}}\lesssim\|u_{n}\|_{L^{2}}.
		\end{align}
		Thus, we obtain
		\begin{align*}
			\left\langle N_{1}(u_{n}),u_{n}\right\rangle\lesssim\|u_{n}\|_{L^{2}}^{3}.
		\end{align*}
		It follows that
		\begin{align}\label{duL}
			\frac{d}{dt}\|u_{n}(t)\|_{L^{2}}\leq C\|u_{n}(t)\|_{L^{2}}+C\|u_{n}(t)\|_{L^{2}}^{2},
		\end{align}
		where $C>0$ is a  constant.
		
		{\it \underline{$H^{1}$ estimate of $u_{n}.$}}

		Multiplying equation \ref{un ODE} by $(k+1)^{2}\tilde{\eta}_{k}^{(o)}(t)$, summing over $k=1,2,\cdots,n$, and applying the integration by parts, we get
		\begin{align*}
			\frac{1}{2}\frac{d}{dt}\|\partial_{\theta}u_{n}\|^2_{L^{2}}=-\left\langle \partial_{\theta}L_{1}(u_{n}),\partial_{\theta}u_{n}\right\rangle+\left\langle \partial_{\theta}N_{1}(u_{n}),\partial_{\theta}u_{n}\right\rangle,
		\end{align*}
		where
		\begin{align*}
			\partial_{\theta}N_{1}(u_{n})=&2\sin\theta h_{n}\partial_{\theta}^{2}u_{n}+\left(4\cos\theta h_{n}+2\sin\theta\partial_{\theta}h_{n}\right)\partial_{\theta}u_{n}\\
			&+2\left(-\sin\theta h_{n}+\cos\theta\partial_{\theta}h_{n}\right)u_{n}-\partial_{\theta}g_{n}\eta_{n}-g_{n}\partial_{\theta}\eta_{n}.
		\end{align*}
		Applying integration by parts, we obtain
		\begin{align*}
			\left\langle\sin\theta h_{n}\partial_{\theta}^{2}u_{n},\partial_{\theta}u_{n}\right\rangle\leq \|\partial_{\theta}(\sin\theta h_{n})\|_{L^{\infty}}\|\partial_{\theta}u_{n}\|_{L^{2}}^{2}\lesssim\|u_{n}\|_{H^{1}}^{3}.
		\end{align*}
		For the term $\left(-\sin\theta h_{n}+\cos\theta\partial_{\theta}h_{n}\right)u_{n}$,
		\begin{align*}
			\left\langle\left(-\sin\theta h_{n}+\cos\theta\partial_{\theta}h_{n}\right)u_{n},\partial_{\theta}u_{n}\right\rangle&=\left\langle -\sin\theta h_{n}u_{n},\partial_{\theta}u_{n}\right\rangle+\big\langle \frac{u_{n}}{\sin\theta}\partial_{\theta}v_{n}+\frac{v_{n}\cos\theta}{\sin\theta}\frac{u_{n}}{\sin\theta},\partial_{\theta}u_{n}\big\rangle\\
			&\lesssim\|h_{n}\|_{L^{\infty}}\|u_{n}\|_{L^{2}}\|\partial_{\theta}u_{n}\|_{L^{2}} + \big\|\frac{u_{n}}{\sin\theta}\big\|_{L^{\infty}}\|\partial_{\theta}v_{n}\|_{L^{2}}\|\partial_{\theta}u_{n}\|_{L^{2}}\\
			&\lesssim\|u_{n}\|_{H^{1}}^{3}.
		\end{align*}
		Then, it follows that
		\begin{align*}
			\left\langle \partial_{\theta}N_{1}(u_{n}),\partial_{\theta}u_{n}\right\rangle\lesssim\|u_{n}\|_{H^{1}}^{3}.
		\end{align*}
		Consequently, we obtain
		\begin{align}\label{0502}
			\frac{d}{dt}\|u_{n}(t)\|_{H^{1}}\leq C\|u_{n}(t)\|_{H^{1}}+C\|u_{n}(t)\|_{H^{1}}^{2},
		\end{align}
		where $	C>0$ is a constant.

		{\it \underline{$H^{m}(m\geq 2)$ estimate of $u_{n}.$}}
		
		Multiplying \ref{un ODE} by $(k+1)^{2m}\tilde{\eta}_{k}^{(o)}(t)$, summing up over $k=1,2,\cdots,n$ and using the integration by parts, we obtain
		\begin{align*}
			\frac{1}{2}\frac{d}{dt}\|\partial_{\theta}^{m}u_{n}\|^2_{L^{2}}=-\left\langle\partial_{\theta}^{m}L_{1}(u_{n}),\partial_{\theta}^{m}u_{n}\right\rangle+\left\langle\partial_{\theta}^{m}N_{1}(u_{n}),\partial_{\theta}^{m}u_{n}\right\rangle.
		\end{align*}
		To get the estimate of $\left\langle\partial_{\theta}^{m}N_{1}(u_{n}),\partial_{\theta}^{m}u_{n}\right\rangle$, we first provide an estimate for $\|\partial_{\theta}^{m}h_{n}\|_{L^{2}}$. Direct calculations give that
		\begin{align*}
			\|\partial_{\theta}^{m}h_{n}\|_{L^{2}}^{2}&=\sum_{k=1}^{l}(2k)^{2m}\left(\sum_{j\geq k}^{l}\frac{-\tilde{\eta}_{2j-1}^{(o)}+\tilde{\eta}_{2j+1}^{(o)}}{(2j+1)^{2}}\right)^{2} + \sum_{k=1}^{l}(2k-1)^{2m}\left(\sum_{j\geq k}^{l}\frac{-\tilde{\eta}_{2j-2}^{(o)}+\tilde{\eta}_{2j}^{(o)}}{(2j)^{2}}\right)^{2}\\
			&\leq\sum_{k=1}^{l}(2k)^{2m}\sum_{j\geq k}^{l}\left((\tilde{\eta}_{2j-1}^{(o)})^{2}\cdot\frac{1}{(2j+1)^{4}}\right) + \sum_{k=1}^{l}(2k-1)^{2m}\sum_{j\geq k}^{l}\left((\tilde{\eta}_{2j-2}^{(o)})^{2}\cdot\frac{1}{(2j)^{4}}\right)\\
			&\leq \sum_{j\geq 1}^{l}\sum_{k\leq j}\left((\tilde{\eta}_{2j-1}^{(o)})^{2}\cdot\frac{(2k)^{2m}}{(2j+1)^{4}}\right) + \sum_{j\geq 1}^{l}\sum_{k\leq j}\left((\tilde{\eta}_{{2j-2}}^{(o)})^{2}\cdot\frac{(2k-1)^{2m}}{(2j)^{4}}\right)\\
			&\leq \sum_{j\geq 1}^{l}(\tilde{\eta}_{2j-1}^{(o)})^{2}\cdot j\cdot\frac{(2j)^{2m}}{(2j+1)^{4}} +\sum_{j\geq 1}^{l}(\tilde{\eta}_{2j-2}^{(o)})^{2}\cdot j\cdot\frac{(2j-1)^{2m}}{(2j)^{4}}\\
			& \leq \sum_{k=1}^{n}(k+1)^{2m-4+1}(\tilde{\eta}_{k}^{(o)})^{2}=\sum_{k=1}^{n}(k+1)^{2m-3}(\tilde{\eta}_{k}^{(o)})^{2}\\
			&\leq \sum_{k=1}^{n}(k+1)^{2m-2}(\tilde{\eta}_{k}^{(o)})^{2}\\
			& =\|\partial_{\theta}^{m-1}u_{n}\|^2_{L^{2}},
		\end{align*}
		which means that
		\begin{align}\label{hn estimtes}
			\|\partial_{\theta}^{m}h_{n}\|_{L^{2}}\leq \|\partial_{\theta}^{m-1}u_{n}\|_{L^{2}}
		\end{align}
		for $m\geq1$. For a canonical term in $\partial_{\theta}^{m}N_{1}(u_{n})$, it is of the form $\partial_{\theta}^{m_{1}}g_{n}\partial_{\theta}^{m_{2}}\eta_{n}$ or  $\partial_{\theta}^{m_{1}+1}u_{n}\partial_{\theta}^{m_{2}}h_{n}\partial_{\theta}^{m_{3}}\sin\theta$ or  $\partial_{\theta}^{m_{1}}u_{n}\partial_{\theta}^{m_{2}}h_{n}\partial_{\theta}^{m_{3}}\cos\theta$. For the term $\partial_{\theta}^{m+1}u_{n}h_{n}\sin\theta$, we apply integration by parts to obtain
		\begin{align*}
			\left\langle \sin\theta h_{n}\partial_{\theta}^{m+1}u_{n},\partial_{\theta}^{m}u_{n}\right\rangle&=-\frac{1}{2}\left\langle\partial_{\theta}(h_{n}\sin\theta),(\partial_{\theta}^{m}u_{n})^{2}\right\rangle\\
			&\leq\|\partial_{\theta}(h_{n}\sin\theta)\|_{L^{\infty}}\|\partial_{\theta}^{m}u_{n}\|_{L^{2}}^{2}\lesssim\|u_{n}\|_{L^{2}}\|\partial_{\theta}^{m}u_{n}\|_{L^{2}}^{2}.
		\end{align*}
		For the terms involving $\partial_{\theta}^{m}g_{n}\eta_{n}$, $\partial_{\theta}^{m}u_{n}h_{n}\cos\theta$ and $\partial_{\theta}^{m}\partial_{\theta}h_{n}\sin\theta$, applying Poincar\'{e}'s inequality yields
		\begin{align*}
			\left\langle\partial_{\theta}^{m}g_{n}\eta_{n},\partial_{\theta}^{m}u_{n}\right\rangle\leq\|\eta_{n}\|_{L^{\infty}}\|\partial_{\theta}^{m}u_{n}\|_{L^{2}}^{2}\lesssim\|u_{n}\|_{L^{2}}\|\partial_{\theta}^{m}u_{n}\|_{L^{2}}^{2}
		\end{align*}
		and
		\begin{align*}
			\left\langle(m+1)\cos\theta h_{n}\partial_{\theta}^{m}u_{n}+m\sin\theta\partial_{\theta}^{m}u_{n}\partial_{\theta}h_{n},\partial_{\theta}^{m}u_{n}\right\rangle&\lesssim\left(\|h_{n}\|_{L^{\infty}}+\|\partial_{\theta}h_{n}\sin\theta\|_{L^{\infty}}\right)\|\partial_{\theta}^{m}u_{n}\|_{L^{2}}^{2}\\
			&\lesssim\|u_{n}\|_{L^{2}}\|\partial_{\theta}^{m}u_{n}\|_{L^{2}}^{2}.
		\end{align*}
		We now focus on controlling the $L^{2}-$norms of $\partial_{\theta}^{m_{1}+1}u_{n}\partial_{\theta}^{m_{2}}h_{n}\partial_{\theta}^{m_{3}}\sin\theta$ for indices $0\leq m_{1}\leq m-2$, $\partial_{\theta}^{m_{1}}g_{n}\partial_{\theta}^{m_{2}}\eta_{n}$ for indices $0\leq m_{1}\leq m-1$ and $\partial_{\theta}^{m_{1}}u_{n}\partial_{\theta}^{m_{2}}h_{n}\partial_{\theta}^{m_{3}}\sin\theta$ for indices $0\leq m_{1}\leq m-1$. For example, applying Poincar\'{e}'s inequality and the estimate \ref{hn estimtes} we obtain
		\begin{align*}
			\|\partial_{\theta}^{m_{1}}u_{n}\partial_{\theta}^{m_{2}}h_{n}\partial_{\theta}^{m_{3}}\sin\theta\|_{L^{2}}&\leq\|\partial_{\theta}^{m_{1}}u_{n}\|_{L^{\infty}}\|\partial_{\theta}^{m_{2}}h_{n}\|_{L^{2}}\\
			&\lesssim\|\partial_{\theta}^{m_{1}+1}u_{n}\|_{L^{2}}\|\partial_{\theta}^{m_{2}-1}u_{n}\|_{L^{2}}\lesssim\|u_{n}\|_{H^{m_{1}+1}}\|u_{n}\|_{H^{m_{2}-1}}.
		\end{align*}
		It concludes that
		\begin{align*}
			\left\langle\partial_{\theta}^{m}N_{1}(u_{n}),\partial_{\theta}^{m}u_{n}\right\rangle=&-\big\langle m!\sum_{\substack{m_{1}+m_{2}=m\\m_{1},m_{2}\geq0}}\frac{\partial_{\theta}^{m_{1}}g_{n}}{m_{1}!}\frac{\partial_{\theta}^{m_{2}}\eta_{n}}{m_{2}!}\big\rangle\\
			&+2\big\langle m!\sum_{\substack{m_{1}+m_{2}+m_{3}=m\\m_{1},m_{2},m_{3}\geq0}}\frac{\partial_{\theta}^{m_{1}+1}u_{n}}{m_{1}!}\frac{\partial_{\theta}^{m_{2}}h_{n}}{m_{2}!}\frac{\partial_{\theta}^{m_{3}}\sin\theta}{m_{3}!},\partial_{\theta}^{m}u_{n}\big\rangle \\
			&+2\big\langle m!\sum_{\substack{m_{1}+m_{2}+m_{3}=m\\m_{1},m_{2},m_{3}\geq0}}\frac{\partial_{\theta}^{m_{1}}u_{n}}{m_{1}!}\frac{\partial_{\theta}^{m_{2}}h_{n}}{m_{2}!}\frac{\partial_{\theta}^{m_{3}}\cos\theta}{m_{3}!},\partial_{\theta}^{m}u_{n}\big\rangle\\
			\leq&C\|u_{n}\|_{H^{m}}^{3}.
		\end{align*}
		In view of \ref{duL} and \ref{0502}, we obtain
		\begin{align}\label{C12}
			\frac{d}{dt}\|u_{n}(t)\|_{H^{m}}\leq C_1\|u_{n}(t)\|_{H^{m}}+C_2\|u_{n}(t)\|_{H^{m}}^{2},
		\end{align}
		where $C_1, C_2$ are positive constants independent of $n$. A direct computation yields 
		\begin{align}\label{050201}
			\|u_{n}(t)\|_{H^{m}}\leq\frac{C_{{1}}}{C_{{2}}(1-e^{C_{{1}}t})\|u_{0}\|_{H^{m}}+C_{{1}}}\|u_{0}\|_{H^{m}}e^{C_{{1}}t}
		\end{align}
		for $0\leq t<T^{*}$ with $T^*=\frac{1}{C_{{1}}}\ln(1+\frac{C_{{1}}}{C_{{2}}\|u_{0}\|_{H^{m}}})$ for any fixed integer $m\ge1$.
		
		We choose
		$$
		T_0=\frac{1}{C_{{1}}}\ln(1+\frac{C_{{1}}}{2C_{{2}}\|u_{0}\|_{H^{m}}}).
		$$
		It then follows that
		\begin{align}\label{u Hm}
			\mathop{\sup}_{0\leq t\leq T_{{0}}}\|u_{n}(t)\|_{H^{m}}\leq\ 2e^{C_{{1}}T_{{0}}}\|u_{0}\|_{H^{m}}.
		\end{align}	
		
		{\it \underline{High estimates of $\partial_{t}u_{n}$ and $\partial_{t}^{2}u_{n}.$}}
		
		To estimate $\partial_{t}u_{n}$, we multiply equation \ref{un ODE} by $(k+1)^{2m-2}\frac{d}{dt}\tilde{\eta}_{k}^{(o)}(t)$ and sum up over $k=1,2,\cdots,n$. After applying integration by parts, we obtain
		\begin{align*}
			\|\partial_{t}\partial_{\theta}^{m-1}u_{n}\|_{L^{2}}=-\left\langle\partial_{\theta}^{m-1}L_{1}(u_{n}),\partial_{t}\partial_{\theta}^{m-1}u_{n}\right\rangle+\left\langle\partial_{\theta}^{m-1}N_{1}(u_{n}),\partial_{t}\partial_{\theta}^{m-1}u_{n}\right\rangle.
		\end{align*}
		For the term $\partial_{\theta}^{m-1}N_{1}(u_{n})$, we have
		\begin{align*}
			\|\partial_{\theta}^{m-1}N_{1}(u_{n})\|_{L^{2}}\lesssim\|u_{n}\|_{H^{m}}^{2}.
		\end{align*}
		Combining with \ref{050201}, we have
		\begin{align}\label{ptun hm}
			\mathop{\sup}_{0\leq t\le T_{0}}\| \partial_{t}u_{n}(t)\|_{H^{m-1}} \leq\ Ce^{2 C_{{1}}T_{{0}}}\|u_{0}\|_{H^{m}}^{2},
		\end{align}
		where $C$ is a positive constant independent of $n$.
		Similarly, to get the estimate of $\partial_{t}^{2}u_{n}$, we apply $\partial_{t}$ to equation \ref{un ODE}, multiply $(k+1)^{2m-4}\frac{d^{2}}{dt^{2}}\tilde{\eta}_{k}^{(o)}(t)$, sum up over $k=1,2,\cdots,n$ and integrate by parts to obtain
		\begin{align*}
			\|\partial_{t}^{2}\partial_{\theta}^{m-2}u_{n}\|_{L^{2}}^{2}=-\left\langle\partial_{t}\partial_{\theta}^{m-2}L_{1}(u_{n}),\partial_{t}^{2}\partial_{\theta}^{m-2}u_{n}\right\rangle+\left\langle\partial_{t}\partial_{\theta}^{m-2}N_{1}(u_{n}),\partial_{t}^{2}\partial_{\theta}^{m-2}u_{n}\right\rangle.
		\end{align*}
		By combining the uniform estimates \ref{u Hm}, we deduce that
		\begin{align}\label{pttun hm}
			\mathop{\sup}_{0\leq t\le T_{0}}\| \partial_{t}^{2}u_{n}(t)\|_{H^{m-2}} \leq\ Ce^{2 C_{{1}}T_{{0}}}\|u_{0}\|_{H^{m}}^{2},
		\end{align}
		where $C$ is a positive constant independent of $n$.

		\textbf{Step 3. Existence and uniqueness}
		
		By combing the estimates \ref{u Hm}, \ref{ptun hm} and \ref{pttun hm}, the existence interval $[0,T(k)]$ established in Step 1 of Subsection \ref{nonlinear existence} can be extended to  $[0,T_0]$. Furthermore, there exists a subsequence $\{u_{n_{k}}\}_{n_{k}=1}^{\infty}\subseteq \{u_{n}\}_{n=1}^{\infty}$
		\begin{align*}
			&u_{n_{k}}\rightharpoonup u\ weakly\ in\ L^{2}([0,T_0];H^{m}(\mathbb{T})),\\
			&\partial_{t}u_{n_{k}}\rightharpoonup\partial_{t}u\ weakly\ in\ L^{2}([0,T_0];H^{m-1}(\mathbb{T})),\\
			&\partial_{t}^{2}u_{n_{k}}\rightharpoonup\partial_{t}^{2}u\ weakly\ in\ L^{2}([0,T_0];H^{m-2}(\mathbb{T})).
		\end{align*}
		The left argument is similar to that of Theorem 1.1 (see Step 3 in the proof of Theorem 1.1) and we omit it here.
		
		Consequently, combining \eqref{u Hm}, \eqref{ptun hm} and \eqref{pttun hm}, we obtain
		\begin{align}
			\mathop{\sup}_{0\leq t\leq T_0}\|u(t)\|_{H^{m}},\mathop{\sup}_{0\leq t\leq T_0}\|\partial_{t}u(t)\|_{H^{m-1}},\mathop{\sup}_{0\leq t\leq T_0}\|\partial_{t}^{2}u(t)\|_{H^{m-2}}\leq C(T_0)\delta_{0}.
		\end{align}
		This completes the proof of Lemma \ref{the local existence}.
	\end{proof}

	\subsection{Nonlinear instability}\label{subsection nonlinear instability}
	
	We now address the second part of Theorem \ref{the nonlinear instability}. To this end, we first construct a solution to the linearized problem and a family solutions to the nonlinear problem that possess certain special properties. We then apply a contradiction argument to establish the existence of a nonlinear solution satisfying the conditions of Theorem \ref{the nonlinear instability} and exhibiting instability.
	
	Let $\delta>0, K>0$ and $F$ satisfying \ref{def F} be arbitrary but given.
	\begin{proof}[Proof of Theorem \ref{the nonlinear instability}]
		
		\quad
		
		\textbf{Step 1. Construction of a solution to the linearized problem.}
		
		By Theorems \ref{the existence}-\ref{the linear instability}, a classical solution $(\eta,v)$ to the linearized equation \ref{etav} can be constructed. Assume that the initial data satisfy the conditions of Theorem \ref{the existence}, and, in addition, fulfill
		\begin{align}\label{L condition}
			0\leq \left\langle-L\eta_{0},\eta_{0}\right\rangle_{\rho}\leq\sqrt{\lambda_{1}}\left\langle\eta_{0},\eta_{0}\right\rangle_{\rho}.
		\end{align}
		Then, it concludes that
		\begin{align}
			\|\eta\|_{\mathcal{H}_{DW}}>\frac{1}{2}\|\eta_{0}\|_{\mathcal{H}_{DW}}e^{\sqrt{\lambda_{1}}t}>0
		\end{align}
		for all $t>0$.
		To simplify the notation, we define $u:=\rho^{1/2}\partial_{\theta}\eta$, so that $\|u\|_{L^{2}}=\|\rho^{1/2}\partial_{\theta}\eta\|_{L^{2}}=\|\eta\|_{\mathcal{H}_{DW}}$. We further define
		\begin{align}
			\breve{\eta}:=\frac{\delta\eta}{\|u_{0}\|_{H^{m}}},
		\end{align}
		and
		\begin{align}\label{def breve u}
			\breve{u}:=\rho^{1/2}\partial_{\theta}\breve{\eta}=\frac{\delta u}{\|u_{0}\|_{H^{m}}}.
		\end{align}
		It follows that $(\breve{\eta},\breve{v})$ remains a classical solution to the linearized equation \ref{etav}, with the same properties as $(\eta,v)$. Specifically, we have
		\begin{align}\label{breeta prop}
			\|\breve{\eta}(t)\|_{\mathcal{H}_{DW}}>\frac{1}{2}\|\breve{\eta}_{0}\|_{\mathcal{H}_{DW}}e^{\sqrt{\lambda_{1}}t}>0.
		\end{align}
		Moreover, we have
		\begin{align}
			\|\breve{u}(0)\|_{H^{m}}=\delta.
		\end{align}
		Let
		\begin{align}
			t_{K}:=\frac{1}{\sqrt{\lambda_{1}}}\ln\frac{4K\delta}{\|\breve{u}_{0}\|_{L^{2}}}.
		\end{align}
		Then, it follows from \ref{instability25} that
		\begin{align}\label{breve eta asumption}
			\|\breve{u}(t_{K})\|_{L^{2}}=\|\breve{\eta}(t_{K})\|_{\mathcal{H}_{DW}}\geq J^{1/2}_{1}(t_{K})\geq \frac{1}{2}e^{t_{K}\sqrt{\lambda_{1}}}\|\breve{\eta}_{0}\|_{H_{DW}}= 2K\delta.
		\end{align}
		
		\textbf{Step 2. Construction of a solution to the corresponding nonlinear problem.}
		
		Based on the initial data $\breve{u}(0)$ of the solution $\breve{u}$ defined in \ref{def breve u}, we proceed to construct a family solutions to the perturbed nonlinear problem.\\
		Let
		\begin{align}
			\bar{\eta}^{\varepsilon}_{0}:=\varepsilon\breve{\eta}_{0}
		\end{align}
		and
		\begin{align*}
			\bar{u}^{\varepsilon}_{0}:=\rho^{1/2}\partial_{\theta}\bar{\eta}^{\varepsilon}_{0}=\varepsilon\breve{u}_{0}
		\end{align*}
		for $\varepsilon\in(0,1)$.
		Then it follows that
		\begin{align}
			\bar{u}_{0}^{\varepsilon}=\rho^{1/2}\partial_{\theta}\bar{\eta}_{0}^{\varepsilon}\in H^{m}
		\end{align}
		for $m>3,$ and
		\begin{align}\label{ueps initial data}
			\|\bar{u}^{\varepsilon}_{0}\|_{H^{m}}=\delta\varepsilon<\delta.
		\end{align}
		By Lemma \ref{the local existence}, there exists a constant $\varepsilon_1>0$ such that for all $\varepsilon \in (0, \varepsilon_1)$, the nonlinear equation \ref{nonlinear equ} admits a classical solution $(\bar{\eta}^{\varepsilon}, \bar{v}^{\varepsilon})$ on $(0, T_{\varepsilon})$, where
		\begin{align*}
			T_{\varepsilon}=\frac{1}{C_{{1}}}\ln(1+\frac{C_{{1}}}{2C_{{2}}\delta\varepsilon}) > t_K.
		\end{align*}
		Moreover, we have
		\begin{align}\label{tk u estimate}
			\mathop{\sup}_{0\leq t\leq t_{K}}\|\bar{u}^{\varepsilon}(t)\|_{H^{m}},\mathop{\sup}_{0\leq t\leq t_{K}}\|\partial_{t}\bar{u}^{\varepsilon}(t)\|_{H^{m-1}},\mathop{\sup}_{0\leq t\leq t_{K}}\|\partial_{t}^{2}\bar{u}^{\varepsilon}(t)\|_{H^{m-2}}\leq C(t_{K})\delta\varepsilon,
		\end{align}
		where $\bar{u}^{\varepsilon}:=\rho^{1/2}\partial_{\theta}\bar{\eta}^{\varepsilon}.$
		
		Next, we proceed to the proof of Theorem \ref{the nonlinear instability}. Suppose, for the sake of contradiction, for any $\varepsilon\in(0,\varepsilon_{1})$, the classical solution $\bar{\eta}^{\varepsilon}$, emanating from the initial data $\bar{\eta}^{\varepsilon}_{0},$ satisfies
		\begin{align*}
			\|\bar{\eta}^{\varepsilon}\|_{\mathcal{H}_{DW}}=\|\bar{u}^{\varepsilon}\|_{L^{2}}\leq F(\|\bar{u}^{\varepsilon}_{0}\|_{H^{m}})
		\end{align*}
		for any $t\in(0,t_{K}]\subset(0,T_{\varepsilon})$, where $F$ is the function defined in \ref{def F}. Combining this with \ref{ueps initial data}, we obtain
		\begin{align}
			\mathop{\sup}_{0\leq t\leq t_{K}}\|\bar{u}^{\varepsilon}(t)\|_{L^{2}}\leq K\|\bar{u}^{\varepsilon}_{0}\|_{H^{m}}\leq K\delta\varepsilon.
		\end{align}
		We denote
		\begin{align}
			(\tilde{\eta}^{\varepsilon},\tilde{v}^{\varepsilon}):=(\bar{\eta}^{\varepsilon},\bar{v}^{\varepsilon})/\varepsilon,
		\end{align}
		and
		\begin{align}
			\tilde{u}^{\varepsilon}:=\rho^{1/2}\partial_{\theta}\tilde{\eta}^{\varepsilon}.
		\end{align}
		Then $(\tilde{\eta}^{\varepsilon},\tilde{v}^{\varepsilon})$ satisfies
		\begin{equation}\label{bareta equ}
			\left\{\begin{split}
				&\partial_{t}(\varepsilon\tilde{\eta}^{\varepsilon}) +L(\varepsilon\tilde{\eta}^{\varepsilon})=N(\varepsilon\tilde{\eta}^{\varepsilon}),\\
				&\partial_{\theta}(\varepsilon\tilde{v}^{\varepsilon})=H(\varepsilon\tilde{\eta}^{\varepsilon}),
			\end{split}\right.
		\end{equation}
		with the initial data
		\begin{align}\label{initial data}
			\tilde{\eta}^{\varepsilon}(0)=\frac{1}{\varepsilon}\bar{\eta}^{\varepsilon}(0)=\breve{\eta}_{0}.
		\end{align}
		Here \ref{bareta equ} and \ref{initial data} can be rewritten as
		\begin{equation}\label{bareta equ1}
			\left\{\begin{split}
				&\partial_{t}\tilde{\eta}^{\varepsilon} +L\tilde{\eta}^{\varepsilon}=\varepsilon N(\tilde{\eta}^{\varepsilon}),\\
				&\partial_{\theta}\tilde{v}^{\varepsilon}=H\tilde{\eta}^{\varepsilon}, \quad \tilde{\eta}^{\varepsilon}(0)=\breve{\eta}_{0},
			\end{split}\right.
		\end{equation}
		where $L$ is the linearized operator given by
		\begin{align*}
			L\tilde{\eta}^{\varepsilon}=\frac{1}{2}\sin2\theta\partial_{\theta}\tilde{\eta}^{\varepsilon}-\cos2\theta\tilde{\eta}^{\varepsilon}+\sin2\theta H\tilde{\eta}^{\varepsilon}-2\cos2\theta\tilde{v}^{\varepsilon},
		\end{align*}
		and $N(\tilde{\eta}^{\varepsilon})$ denotes the nonlinear term
		\begin{align*}
			N(\tilde{\eta}^{\varepsilon})=\partial_{\theta}\tilde{v}^{\varepsilon}\tilde{\eta}^{\varepsilon}-\tilde{v}^{\varepsilon}\partial_{\theta}\tilde{\eta}^{\varepsilon}.
		\end{align*}
		Moreover, the following estimates hold due to \ref{tk u estimate}:
		\begin{align}
			&\mathop{\sup}_{0\leq t\leq t_{K}}\|\tilde{u}^{\varepsilon}(t)\|_{H^{m}}\leq\ C(t_{K})\delta,\\
			&\mathop{\sup}_{0\leq t\leq t_{K}}\|\partial_{t}\tilde{u}^{\varepsilon}(t)\|_{H^{m-1}}, \mathop{\sup}_{0\leq t\leq t_{K}}\|\partial_{t}^{2}\tilde{u}^{\varepsilon}(t)\|_{H^{m-2}}\leq\ C(t_{K})\delta,
		\end{align}
		which are independent of $\varepsilon.$
		Thus, we immediately infer that there exists a subsequence (not relabeled) of $\{\bar{\eta}^{\varepsilon}\}$ such that
		\begin{align*}
			&\tilde{u}^{\varepsilon}\rightharpoonup \tilde{u}\ weakly\ in\ L^{2}([0,t_{K}];H^{m}(\mathbb{T})),\\
			&\partial_{t}\tilde{u}^{\varepsilon}\rightharpoonup\partial_{t}\tilde{u}\ weakly\ in\ L^{2}([0,t_{K}];H^{m-1}(\mathbb{T})),\\
			&\partial_{t}^{2}\tilde{u}^{\varepsilon}\rightharpoonup\partial_{t}^{2}\tilde{u}\ weakly\ in\ L^{2}([0,t_{K}];H^{m-2}(\mathbb{T})),
		\end{align*}
		and
		\begin{align}\label{tilde u assum}
			\mathop{\sup}_{0\leq t\leq t_{K}}\|\tilde{u}(t)\|_{L^{2}}\leq K\delta,\ \tilde{u}\in C([0,t_{K}],H^{m}(\mathbb{T}))\cap C^{2}([0,t_{K}],H^{m-2}(\mathbb{T})).
		\end{align}
		In fact, since $\tilde{\eta}^{\varepsilon}$ is odd and satisfies $\partial_{\theta}\tilde{\eta}^{\varepsilon}=\rho^{-1/2}\tilde{u}^{\varepsilon}$, we also have
		\begin{align*}
			&\tilde{\eta}^{\varepsilon}\rightharpoonup \tilde{\eta}\ weakly\ in\ L^{2}([0,t_{K}];H^{m+1}(\mathbb{T})),\\
			&\partial_{t}\tilde{\eta}^{\varepsilon}\rightharpoonup\partial_{t}\tilde{\eta}\ weakly\ in\ L^{2}([0,t_{K}];H^{m}(\mathbb{T})),\\
			&\partial_{t}^{2}\tilde{\eta}^{\varepsilon}\rightharpoonup\partial_{t}^{2}\tilde{\eta}\ weakly\ in\ L^{2}([0,t_{K}];H^{m-1}(\mathbb{T})).
		\end{align*}
		Passing the limit $\varepsilon\to0$ in the equations \ref{bareta equ1}, we arrive at the linearized model
		\begin{equation}\label{tilde eta equ}
			\left\{\begin{split}
				&\partial_{t}(\tilde{\eta}) +L(\tilde{\eta})=0,\\
				&\partial_{\theta}\tilde{v}=H\tilde{\eta}.
			\end{split}\right.
		\end{equation}
		Thus, $(\tilde{\eta},\tilde{v})$ is a classical solution to the linearized problem \ref{etav} on $[0,t_{K}],$ with the same initial data as $\breve{\eta}$.  Therefore, by Theorem \ref{the existence}, we conclude that
		\begin{align}
			\breve{\eta} (t,\theta) = \tilde{\eta} (t,\theta), \quad on\ [0,t_{K}]\times\mathbb{T}.
		\end{align}
		Combining the assumption \ref{breve eta asumption} with \ref{tilde u assum} , we deduce that
		\begin{align*}
			2K\delta\leq\|\breve{\eta}(t_{K})\|_{\mathcal{H}_{DW}}=\|\tilde{\eta}(t_{K})\|_{\mathcal{H}_{DW}}\leq K\delta,
		\end{align*}
		which yields a contradiction. 
		
		Thus we complete the proof of Theorem \ref{the nonlinear instability}.
	\end{proof}

	\section{Nonlinear stability}\label{nonlinear}
	In this section, we consider the nonlinear problem for $\eta = \omega + \sin2\theta$, which is given by
	\begin{equation}\label{netav}
		\left\{\begin{split}
			&\partial_{t}\eta = -L\eta + \partial_{\theta}v\eta - v\partial_{\theta}\eta,\\
			&\partial_{\theta}v = H\eta,\\
			&\eta(0,\theta)=\eta_{0}(\theta),\quad v(t,0) = 0.
		\end{split}\right.
	\end{equation}
	Recall that the discussion in \cite{LL2020} shows that linearized equation \ref{netav} has the exponential decay $\|\eta\|_{\mathcal{H}_{DW}} \leq C\|\eta_0\|_{\mathcal{H}_{DW}}e^{-\frac{3}{8}t}$ for $t\geq 0$,  given initial data of the form $\eta_0 = \sum_{k\geq 1}a_{2k}e_{2k}^{(o)}.$ In this section, we establish the global well-posedness of the system \ref{netav} for initial data of the form $\sum_{k\geq 1}a_{2k}e_{2k}^{(o)}$.
	To estimate the nonlinear term, we first present the following lemma.
	\begin{lemma}\label{lefs}
		suppose that $\rho^{1/2}\partial_{\theta}f\in L^{2}$, f is odd and $f(0)=f(\pi)=0$. We have
		\begin{align}
			\|\frac{f}{\sin\theta}\|_{L^{\infty}}\lesssim\ \|\frac{\partial_{\theta}f}{\sin\theta}\|_{L^{2}}.
		\end{align}
	\end{lemma}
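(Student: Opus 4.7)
The plan is to control $g(\theta):=f(\theta)/\sin\theta$ pointwise by exploiting the two vanishing conditions $f(0)=0=f(\pi)$ and writing $\partial_\theta f = (\partial_\theta f/\sin\theta)\cdot\sin\theta$, then applying Cauchy--Schwarz. First I would observe that $g$ is even: since both $f$ and $\sin$ are odd, $g(-\theta)=g(\theta)$, so it suffices to bound $g$ on $(0,\pi)$. I would then split this interval into $(0,\pi/2]$ and $[\pi/2,\pi)$ and use the appropriate boundary value on each piece.

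On $(0,\pi/2]$, starting from $f(0)=0$, I would write
\begin{equation*}
f(\theta)=\int_0^\theta \partial_\phi f(\phi)\,d\phi = \int_0^\theta \frac{\partial_\phi f(\phi)}{\sin\phi}\,\sin\phi\,d\phi,
\end{equation*}
and by Cauchy--Schwarz
\begin{equation*}
|f(\theta)|^2 \leq \Big\|\tfrac{\partial_\phi f}{\sin\phi}\Big\|_{L^2(0,\theta)}^2 \int_0^\theta \sin^2\phi\,d\phi.
\end{equation*}
The key elementary inequality I would verify is
\begin{equation*}
\int_0^\theta \sin^2\phi\,d\phi \;=\; \tfrac{\theta}{2}-\tfrac{\sin 2\theta}{4} \;\leq\; C\sin^3\theta \quad\text{on } (0,\pi/2],
\end{equation*}
which follows from the Taylor expansion near $0$ (both sides are $\sim \theta^3/3$) and continuity on the closed subinterval away from $0$. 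Dividing by $\sin^2\theta$ then yields $|g(\theta)|^2 \leq C\sin\theta\,\|\partial_\phi f/\sin\phi\|_{L^2}^2$, which is uniformly controlled by $\|\partial_\phi f/\sin\phi\|_{L^2}^2$.

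On $[\pi/2,\pi)$, I would use $f(\pi)=0$ to write $f(\theta)=-\int_\theta^\pi \partial_\phi f\,d\phi$, and run the symmetric argument with the substitution $\phi\mapsto \pi-\phi$ (or directly, using $\sin(\pi-\phi)=\sin\phi$). Combining the two pieces gives the desired bound on $(0,\pi)$, and by the evenness of $g$ it extends to $(-\pi,\pi)\setminus\{0\}$, yielding $\|f/\sin\theta\|_{L^\infty}\lesssim \|\partial_\theta f/\sin\theta\|_{L^2}$. No step here is a serious obstacle; the only mildly delicate point is the elementary comparison $\int_0^\theta \sin^2\phi\,d\phi \lesssim \sin^3\theta$, which is a routine calculus check. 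Note that the hypothesis $\rho^{1/2}\partial_\theta f \in L^2$ implicitly enforces that $\partial_\theta f$ vanishes sufficiently fast at $0$ and $\pi$, so no further regularity hypothesis on $f$ is needed for the Cauchy--Schwarz step to be meaningful.
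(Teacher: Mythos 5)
Your proposal is correct and follows essentially the same route as the paper: split $(0,\pi)$ at $\pi/2$, write $f$ as an integral of $\partial_\theta f=(\partial_\theta f/\sin\theta)\cdot\sin\theta$ from the endpoint where $f$ vanishes, apply Cauchy--Schwarz, and absorb $\bigl(\int\sin^2\phi\,d\phi\bigr)^{1/2}$ into $\sin\theta$. The only cosmetic difference is that you prove the slightly sharper comparison $\int_0^\theta\sin^2\phi\,d\phi\lesssim\sin^3\theta$, whereas the paper only needs (and implicitly uses) the weaker bound $\lesssim\sin^2\theta$.
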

	\begin{proof}[Proof of Lemma \ref{lefs}] \, Since $f$ is odd, here we only need to estimate this norm in $[0,\pi]$. For $\theta\in [0,\frac{\pi}{2}],$ applying H\"{o}lder's inequality, we obtain
		\begin{align}
			\big|\frac{f}{\sin\theta}\big|& = \big|\frac{1}{\sin\theta}\int_{0}^{\theta}\partial_{\theta}f d\theta\big|\nonumber\\
			& \lesssim  \frac{1}{|\sin\theta|}\left(\int_0^{\theta}\sin^2\theta d\theta\right)^{1/2}\big\|\frac{\partial_{\theta}f}{\sin\theta}\big\|_{L^{2}}\\
			& \lesssim \big\|\frac{\partial_{\theta}f}{\sin\theta}\big\|_{L^{2}}.\nonumber
		\end{align}
		For $\theta\in (\frac{\pi}{2},\pi],$ we similarly have
		\begin{align}
			\big|\frac{f}{\sin\theta}\big|& = \big|\frac{1}{\sin\theta}\int_{\pi}^{\theta} \partial_{\theta}fd\theta\big|\nonumber\\
			&\lesssim \frac{1}{|\sin\theta|}\left(\int_{\theta}^{\pi}\sin^2\theta d\theta\right)^{1/2}\big\|\frac{\partial_{\theta}f}{\sin\theta}\big\|_{L^{2}}\\
			&\lesssim\big\|\frac{\partial_{\theta}f}{\sin\theta}\big\|_{L^{2}}.\nonumber
		\end{align}
		Since $f$ is odd, we can finish the proof of Lemma \ref{lefs}.
	\end{proof}
	
	Now, we consider the initial data of the form $\eta_0 = \sum_{k\geq 1}a_{2k}e_{2k}^{(o)}\in \mathcal{H}_{DW}$.
	\begin{proof}[Proof of Theorem \ref{wellp}]
		The discussion in \cite{LL2020} implies that
		\begin{align}\label{decl}
			\left\langle-L\eta, \eta\right\rangle_{\rho} \leq -\frac{3}{8}\left\langle\eta,\eta\right\rangle_{\rho}.
		\end{align}
		Taking the weighted $\rho$-inner product with $\eta$ on the both sides of the first equation of the system \ref{netav}, we obtain
		\begin{align}\label{dtineta}
			\frac{1}{2}\frac{d}{dt}\left\langle\eta,\eta\right\rangle_{\rho} &= \left\langle-L\eta, \eta\right\rangle_{\rho} + \left\langle \partial_{\theta}v\eta - v\partial_{\theta}\eta, \eta\right\rangle_{\rho}\\
			&\leq\ -\frac{3}{8}\left\langle\eta,\eta\right\rangle_{\rho} + \left\langle \partial_{\theta}v\eta - v\partial_{\theta}\eta, \eta\right\rangle_{\rho}.\nonumber
		\end{align}
		The second term on the right hand side of \ref{dtineta} can be written as	
		\begin{align*}
			\left\langle \partial_{\theta}v\eta - v\partial_{\theta}\eta, \eta\right\rangle_{\rho} & = \frac{1}{4\pi}\int_{\mathbb{S}^1}\frac{(\eta \partial_{\theta}^{2}v-v\partial_{\theta}^{2}\eta)\partial_{\theta}\eta}{\sin^2\theta}d\theta\\
			& =\frac{1}{4\pi}\int_{\mathbb{S}^1}\frac{\eta \partial_{\theta}^{2}v \partial_{\theta}\eta}{\sin^2\theta}d\theta - \frac{1}{4\pi}\int_{\mathbb{S}^1}\frac{v \partial_{\theta}^{2}\eta \partial_{\theta}\eta}{\sin^2\theta}d\theta\\
			& \equiv I + II.		
		\end{align*}
		Direct estimates give
		\begin{align}\label{eI}
			| I| & \lesssim\ \|\frac{\eta}{\sin\theta}\|_{L^{\infty}}\|\rho^{1/2}\partial_{\theta}\eta\|_{L^{2}}\| \partial_{\theta}^{2}v\|_{L^2}\\
			&\lesssim\ \|\rho^{1/2}\partial_{\theta}\eta\|_{L^{2}}^{3}.\nonumber
		\end{align}
		where we have used Lemma \ref{lefs} and the fact
		\begin{align*}
			\|\partial_{\theta}^{2}v\|_{L^{2}}\lesssim \|\partial_{\theta}\eta\|_{L^{2}}\leq\|\rho^{1/2}\partial_{\theta}\eta\|_{L^{2}}.
		\end{align*}
		Rewrite the term $II$ as
		\begin{align}\label{II}
			II &= -\frac{1}{8\pi}\int_{\mathbb{T}}\frac{v \partial_{\theta}((\partial_{\theta}\eta)^{2})}{\sin^{2}\theta}d\theta\nonumber\\
			&= \frac{1}{8\pi}\int_{\mathbb{T}}\frac{\partial_{\theta}v (\partial_{\theta}\eta)^{2}}{\sin^{2}\theta}d\theta + \frac{1}{8\pi}\int_{\mathbb{T}}v (\partial_{\theta}\eta)^{2} \partial_{\theta}\left(\frac{1}{\sin^{2}\theta}\right)d\theta\\
			& =: II_{1} + II_{2}.\nonumber
		\end{align}
		Direct estimates show
		\begin{align}\label{eII1}
			| II_{1}| &\lesssim\ \| \partial_{\theta}v\|_{L^{\infty}}\|\rho^{1/2}\partial_{\theta}\eta\|_{L^{2}}^{2}\nonumber\\
			&\lesssim\ \|\rho^{1/2}\partial_{\theta}\eta\|_{L^{2}}^{2}\|\partial_{\theta}\eta\|_{L^2}\\
			&\lesssim\ \|\rho^{1/2}\partial_{\theta}\eta\|_{L^{2}}^{3}.\nonumber
		\end{align}
		
		For $\|\frac{v}{\sin\theta}\|_{L^{\infty}}$, since $v$ is odd and periodic, we have $v(\pi)=v(0)=0$ and only need to estimate this norm in $[0,\pi]$. Since $\sin\theta\geq\frac{2}{\pi}\min\{\theta,\pi-\theta\}$ in $[0,\pi]$, we have $\|\frac{v}{\sin\theta}\|_{L^{\infty}}\lesssim\|\partial_{\theta}v\|_{L^{\infty}}$ by Lagrange's mean value theorem. Then
		\begin{align}\label{eII2}
			| II_{2}| &\ = |\frac{1}{4\pi}\int_{\mathbb{T}}v (\partial_{\theta}\eta)^{2}\ \frac{\cos\theta}{\sin^{3}\theta}d\theta|\nonumber\\
			&\lesssim\ \|\frac{v}{\sin\theta}\|_{L^{\infty}}\|\rho^{1/2}\partial_{\theta}\eta\|_{L^{2}}^{2}\nonumber\\
			&\lesssim\|\partial_{\theta}v\|_{L^{\infty}}\|\rho^{1/2}\partial_{\theta}\eta\|_{L^{2}}^{2}\\
			&\lesssim\|\partial_{\theta}\eta\|_{L^{2}}\|\rho^{1/2}\partial_{\theta}\eta\|_{L^{2}}^{2}\nonumber\\
			&\lesssim\ \|\rho^{1/2}\partial_{\theta}\eta\|_{L^{2}}^{3}.\nonumber
		\end{align}
		Substituting \ref{eI}-\ref{eII2} into \ref{dtineta}, we deduce
		\begin{align}
			\frac{d}{dt}\left\langle\eta,\eta\right\rangle_{\rho}\ \leq\ -\frac{3}{4}\left\langle\eta,\eta\right\rangle_{\rho}^{2}\ + C\left\langle\eta,\eta\right\rangle_{\rho}^{3},
		\end{align}
		where C is a positive constant. Consequently, there exists an absolute constant $\varepsilon >0$ such that if $\left\langle\eta_{0},\eta_{0}\right\rangle_{\rho}\leq\varepsilon$, using a bootstrap argument, we obtain
		\begin{align*}
			\left\langle\eta,\eta\right\rangle_{\rho}\lesssim\varepsilon,
		\end{align*}
		for all $t > 0$. This completes the proof of Theorem \ref{wellp}.
	\end{proof}
	
	\section{appendix}\label{num}
	{\bf A.~~}For the parameters $\lambda_{k}^1, \lambda_{k}^2, a_{k}$ ($k\geq 1$) in Lemma \ref{lepsk}, we have the following estimates.
	\begin{lemma}\label{lemmabound}
		There exist two absolute constants $\lambda_{2}>\lambda_{1}>0$ satisfying
		\begin{align}
			\frac{1}{50}< \lambda_{1}< \lambda_{k}^1, \lambda_{k}^2, a_{k} < \lambda_{2}<\frac{3}{5}
		\end{align}
		for all $k\geq 1$.
	\end{lemma}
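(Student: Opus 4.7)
The plan is to combine an asymptotic analysis for large $k$ with an explicit finite check for small $k$. First I would extract the leading behavior of $a_k$ and $\varepsilon_k$ directly from the formulas
\[
a_k=\Bigl(-\tfrac12-\tfrac{2k^2-4k-8}{(k+2)^2k^2}\Bigr)^{2},\qquad \varepsilon_k=\frac{-2k^3+32k+32}{(k+2)^4(k+4)}.
\]
From these one sees that $a_k\to \tfrac14$ and $\varepsilon_k = O(k^{-2})\to 0$ as $k\to\infty$; in particular, for any preassigned tolerance $\delta\in(0,\tfrac14)$ one can fix an explicit threshold $K_0=K_0(\delta)$ such that $\tfrac14-\delta<a_k<\tfrac14+\delta$ and $|\varepsilon_k|<\delta$ for all $k\ge K_0$.

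Second, feeding these asymptotics into the eigenvalue formulas
\[
\lambda_k^{1,2}=\frac{a_k+a_{k+2}}{2}\pm\frac{\sqrt{(a_k-a_{k+2})^2+4\varepsilon_k^2}}{2},
\]
I would show that both $\lambda_k^1$ and $\lambda_k^2$ converge to $\tfrac14$ as $k\to\infty$, with the same $O(k^{-2})$ rate, since $|a_k-a_{k+2}|=O(k^{-2})$ as well. Consequently, choosing $\delta$ small (say $\delta=\tfrac{1}{20}$) I can guarantee that for $k\ge K_0$ the three quantities $a_k,\lambda_k^1,\lambda_k^2$ all lie in a fixed interval $[\tfrac14-2\delta,\tfrac14+2\delta]\subset(0,\infty)$.

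Third, for the finitely many indices $k=1,2,\ldots,K_0-1$, I would compute $a_k$, $\varepsilon_k$, and the eigenvalues $\lambda_k^1,\lambda_k^2$ directly from the closed-form expressions. For each such $k$, the value $a_k$ is a rational number that is manifestly nonzero (the numerator $k^4+4k^3+8k^2-8k-16$ of $d_k-d_{k+2}$ is easily checked to be nonzero for $k\ge 1$), and the positive definiteness of $A_k$ established in Lemma~4.1 guarantees $\lambda_k^1>0$. Taking the minimum and maximum of this finite list together with the asymptotic bounds from the previous step yields the absolute constants
\[
\lambda_1:=\tfrac12\min\Bigl\{\tfrac14-2\delta,\,\min_{1\le k<K_0}\{a_k,\lambda_k^1\}\Bigr\}>0,\qquad \lambda_2:=2\max\Bigl\{\tfrac14+2\delta,\,\max_{1\le k<K_0}\{a_k,\lambda_k^2\}\Bigr\},
\]
completing the proof.

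The main obstacle is the uniform positivity of $\lambda_k^1$, since in principle $\lambda_k^1$ could approach $0$ if $\det(A_k)=a_k a_{k+2}-\varepsilon_k^2$ became small relative to $a_k+a_{k+2}$. The asymptotic analysis sidesteps this because $a_k a_{k+2}\to\tfrac{1}{16}$ while $\varepsilon_k^2=O(k^{-4})$, so $\det(A_k)$ is bounded below by a positive constant for large $k$; for the finitely many small $k$, positivity is inherited from Lemma~4.1. A minor subtlety is that the asymptotic estimates must be made quantitative enough to pin down a workable $K_0$ (consistent with the sharper bounds $\lambda_1>\tfrac{1}{50}$ and $\lambda_2<\tfrac35$ stated in the remark), which amounts to elementary but tedious inequalities on rational functions of $k$.
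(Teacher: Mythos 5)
Your proposal is correct in outline and would establish the lemma, but it takes a different route from the paper. You argue by asymptotics plus a finite check: show $a_k\to\frac14$, $\varepsilon_k=O(k^{-2})$, hence $\lambda_k^1,\lambda_k^2\to\frac14$, fix a threshold $K_0$, and handle $k<K_0$ by direct computation. The paper instead performs a global monotonicity analysis of the single rational function $f(x)=-\frac12-2\frac{x^2-2x-4}{(x+2)^2x^2}$: its derivative has a unique zero $x_0\in(5,6)$, so $f$ decreases then increases, and evaluating at $x=1,2,5,6$ yields the uniform two-sided bound $\frac38\le|d_k-d_{k+2}|\le\frac{11}{18}$ for \emph{all} $k\ge1$ at once, whence $(\frac38)^2\le a_k\le(\frac{11}{18})^2$, $|\varepsilon_k|\le\frac{62}{405}$, $a_ka_{k+2}>\frac1{100}$, and finally the explicit constants $\lambda_1=\frac1{50}$, $\lambda_2=\frac35$ via the formula $\lambda_k^1=\frac{2(a_ka_{k+2}-\varepsilon_k^2)}{a_k+a_{k+2}+\sqrt{(a_k-a_{k+2})^2+4\varepsilon_k^2}}$. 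The paper's method buys sharp numerical constants with no case split; yours buys a softer, more routine argument at the cost of having to make the asymptotics quantitative enough to pin down $K_0$ and then grind through the finite list.

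One point needs repair: you invoke \textquotedblleft the positive definiteness of $A_k$ established in Lemma \ref{lepsk}\textquotedblright\ to get $\lambda_k^1>0$ for the small indices, but the proof of Lemma \ref{lepsk} is itself deferred to this appendix, so that appeal is circular. The fix is immediate and consistent with the rest of your plan: for the finitely many $k<K_0$ simply verify $a_k>0$ and $\det A_k=a_ka_{k+2}-\varepsilon_k^2>0$ directly from the closed forms (as the paper does uniformly via $a_ka_{k+2}>\frac1{100}>\varepsilon_k^2$), which gives $\lambda_k^1>0$ without reference to Lemma \ref{lepsk}.
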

	\begin{proof}[Proof of Lemma \ref{lemmabound}]
		The exact expression for $-d_{k+2} + d_{k}$ is given by
		\begin{align*}
			-d_{k+2} + d_{k} & = -\frac{k^{2}(k+4)}{4(k+2)^{2}} + \frac{(k-2)^{2}(k+2)}{4k^{2}}\\
			&= -\frac{k^{4}+4k^{3}+8k^{2}-8k-16}{2(k+2)^{2}k^{2}}\\
			&= -\frac{1}{2} - \frac{2k^{2}-4k-8}{(k+2)^{2}k^{2}}.
		\end{align*}
		Let
		\begin{align}\label{1225}
			f(x) =-\frac{1}{2}-2\frac{x^2 -2x-4}{(x+2)^2 x^2}, \quad x\geq 1.
		\end{align}
		Then we have
		\begin{align*}
			f'(x) = \frac{4x(x+2)(x^3-3x^2-10x-8)}{(x+2)^4 x^2}.
		\end{align*}
		It is direct to obtain  that there exists a unique real number $x_0\in (5,6)$ such that $f'(x_0)=0$. Moreover, we have that $f'(x)\leq 0$ if $x\in [1,x_0],$ and $f'(x)\geq 0$ if $x\in [x_0,\infty)$. Direct computations shows that
		$f(1) = -d_3 + d_1 =\frac{11}{18} > 0, f(2) = -d_4 + d_2 =-\frac{3}{8} < 0$, and
		\begin{align*}
			f(5) = -\frac{1269}{2450} < f(6) = -\frac{149}{288} < 0.
		\end{align*}
		It concludes that
		\begin{align*}
			f(5) \leq f(k) \leq -\frac{1}{2}
		\end{align*}
		for $k\geq 4$, where the second inequality is from \eqref{1225}. It follows that
		\begin{align}
			\frac{3}{8} \leq |-d_{k+2} + d_{k}| \leq \frac{11}{18}
		\end{align}
		for $k\geq 1$. Furthermore, using the expression
		\begin{align*}
			a_{k} = \left(-d_{k+2} + d_{k}\right)^{2} = \left(-\frac{1}{2} - \frac{2k^{2}-4k-8}{(k+2)^{2}k^{2}}\right)^2,
		\end{align*}
		we have
		\begin{align}\label{12252}
			a_{k} \geq \frac{1}{4}
		\end{align}
		for $k\ge 4$, and
		\begin{align}\label{12253}
			(\frac{3}{8})^2 \leq a_k \leq (\frac{11}{18})^2
		\end{align}
		for $k\ge 1$.
		The explicit expression for $\varepsilon_{k}$ is given by
		\begin{align*}
			\varepsilon_{k} &= -2d_{k+2}^{2}+d_{k}d_{k+2}+ d_{k+2}d_{k+4}\\
			& = \frac{-2k^{4}(k+4)^{2}}{4^{2}(k+2)^{4}}+\frac{(k-2)^{2}(k+4)}{4^{2}(k+2)} + \frac{k^{2}(k+6)}{4^{2}(k+4)}\\
			& = \frac{-2k^{3}+32k+32}{(k+2)^{4}(k+4)}.
		\end{align*}
		Direct computations reveal that
		\begin{align}
			|\varepsilon_{k}| \leq \varepsilon_{1} = \frac{62}{405}
		\end{align}
		for all $k\geq 1$. The analysis of $\varepsilon_{k}$ is similar to that for $-d_{k+2} + d_k$, so we omit the details here.
		
		Moreover, we have that $-d_{k+2} + d_{k}\rightarrow -\frac{1}{2}, a_{k}\rightarrow \frac{1}{4}$ and $\varepsilon_{k}\rightarrow 0$ as $k\rightarrow \infty$.
		
		Now we estimate the eigenvalues $\lambda_{k}^1, \lambda_{k}^2$ of the matrix $A_k$, which  are given by
		\begin{align*}
			\lambda_{k}^{1}=\frac{a_{k}+a_{k+2}-\sqrt{(a_{k}-a_{k+2})^{2}+4\varepsilon_{k}^{2}}}{2},
		\end{align*}
		\begin{align*}
			\lambda_{k}^{2}=\frac{a_{k}+a_{k+2}+\sqrt{(a_{k}-a_{k+2})^{2}+4\varepsilon_{k}^{2}}}{2}
		\end{align*}
		for $k\geq 1$.
		From \eqref{12252}, it yields
		\begin{align*}
			a_{k}a_{k+2} \geq \min\{a_{1}a_{3}, a_{2}a_{4}, a_{3}a_{5}, \frac{1}{16}\} = a_{2}a_{4} = (-\frac{3}{8})^2(-\frac{37}{72})^2 > \frac{1}{100},
		\end{align*}
		which implies
		\begin{align}\label{12255}
			\lambda_{k}^{1} &= \frac{2(a_k a_{k+2} - \varepsilon_{k}^2)}{a_k + a_{k+2} + \sqrt{(a_{k}-a_{k+2})^{2}+4\varepsilon_{k}^{2}}}\nonumber\\
			&\geq \frac{2(a_{2}a_{4} - \varepsilon_{1}^2)}{2a_1 + \sqrt{(a_{1}-a_{2})^{2}+4\varepsilon_{1}^{2}}} = \frac{2[(-\frac{3}{8})^2(-\frac{37}{72})^2 - (\frac{62}{405})^2]}{2(\frac{11}{18})^2 +  \sqrt{[(\frac{11}{18})^2- (-\frac{3}{8})^2]^2 + 4(\frac{62}{405})^2}} > \frac{1}{50}
		\end{align}
		for $k\geq 1$. Similarly, it deduces that
		\begin{align}\label{12256}
			\lambda_{k}^{1} < \lambda_{k}^2 & = \frac{a_{k}+a_{k+2}+\sqrt{(a_{k}-a_{k+2})^{2}+4\varepsilon_{k}^{2}}}{2}\nonumber\\
			& \leq \frac{2 a_{1} + \sqrt{(a_{1}-a_{2})^{2}+4\varepsilon_{1}^{2}}}{2} = \frac{2(\frac{11}{18})^2 +  \sqrt{[(\frac{11}{18})^2- (-\frac{3}{8})^2]^2 + 4(\frac{62}{405})^2}}{2}< \frac{3}{5}
		\end{align}
		for $k\geq 1$.
		
		In view of  \eqref{12253}, \eqref{12255} and \eqref{12256}, there exist two absolute constants $\lambda_{2} > \lambda_{1}> 0$ such that
		\begin{align*}
			\frac{1}{50} < \lambda_{1} < \lambda_{k}^{1}, \lambda_{k}^{2}, a_{k} < \lambda_{2}<\frac{3}{5}
		\end{align*}
		for all $k\geq 1$.
		The proof of the lemma is finished.
	\end{proof}

	{\bf B.~~} In the end of the paper, we present the numerical illustrations of $d_{k} - d_{k+2}, a_{k},\varepsilon_{k}$ and $\lambda_{k}^1,\lambda_{k}^2$. Our rigorous result in Lemma \ref{lemmabound} is consistent with the numerical result as follows.	
	\begin{figure}[H]\centering
		\subfigure[Numerical illustration of $d_{k}-d_{k+2}$.]{
			\centering
			\includegraphics[width=0.42\linewidth]{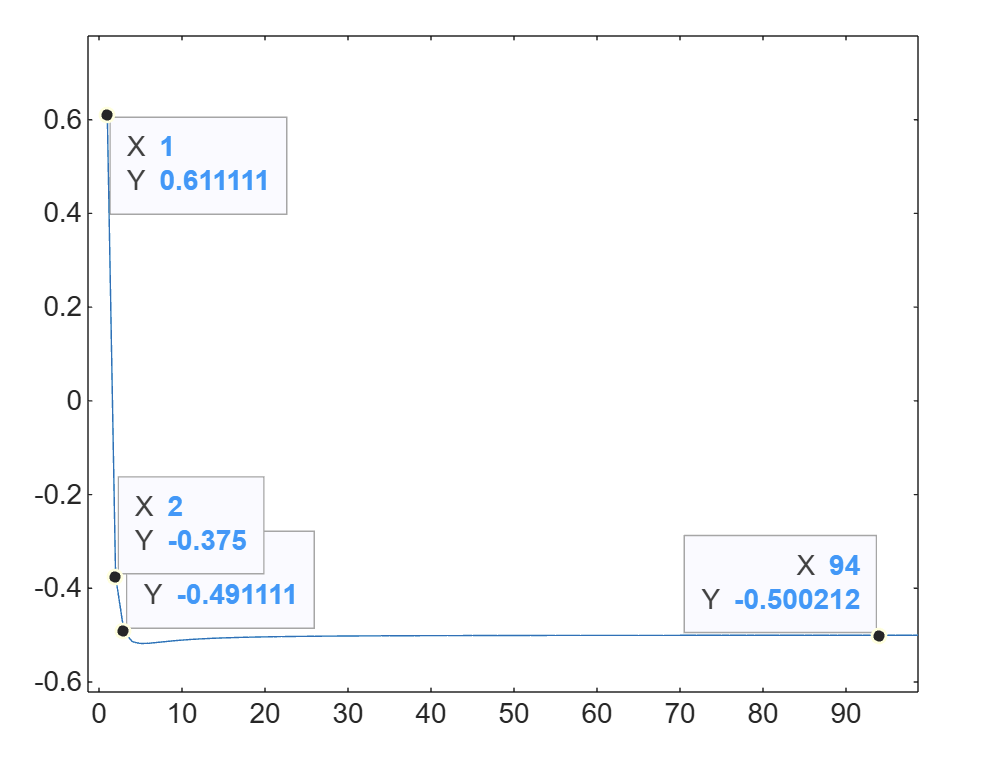}
		}
		\subfigure[Numerical illustration of $a_{k}$.]{\centering
			\includegraphics[width=0.44\linewidth]{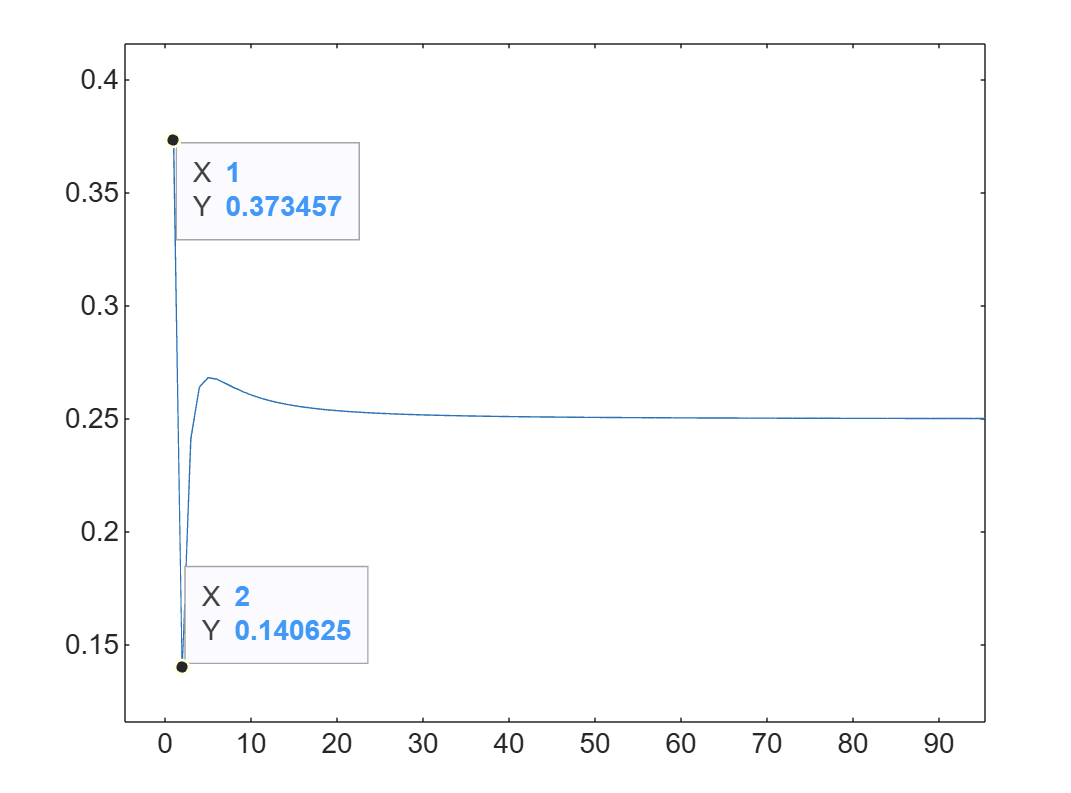}}
		\subfigure[Numerical illustration of $\varepsilon_{k}$.]{\centering
			\includegraphics[width=0.42\linewidth]{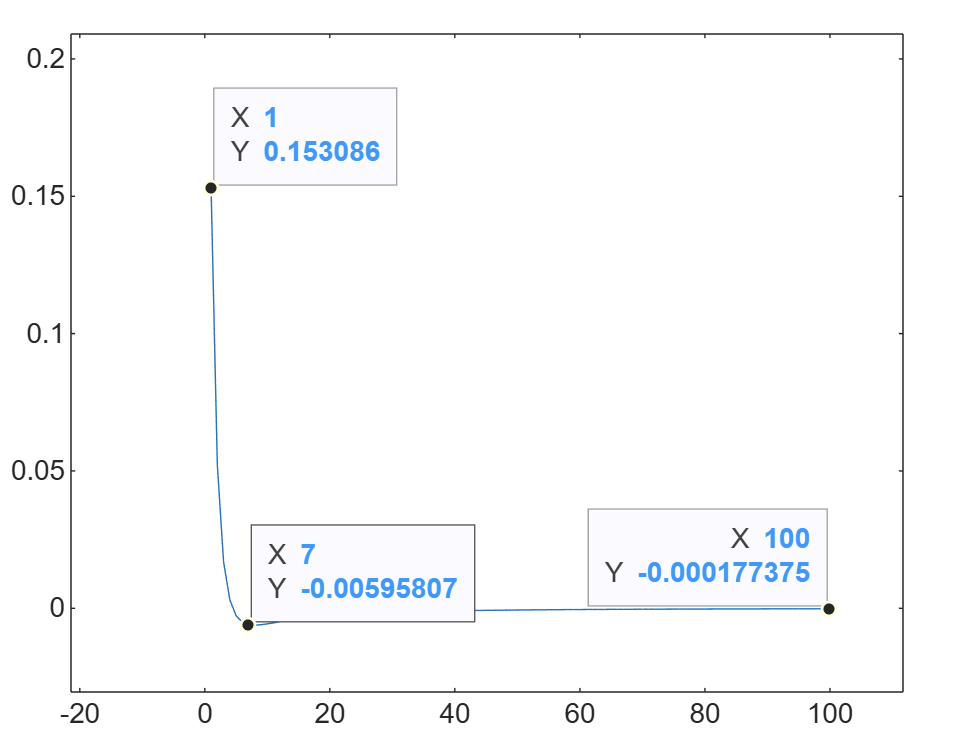}}
	\end{figure}

	\begin{figure}[H]\centering
		\subfigure[Numerical illustration of $\lambda_{k}^{1}$.]{\centering
			\includegraphics[width=0.44\linewidth]{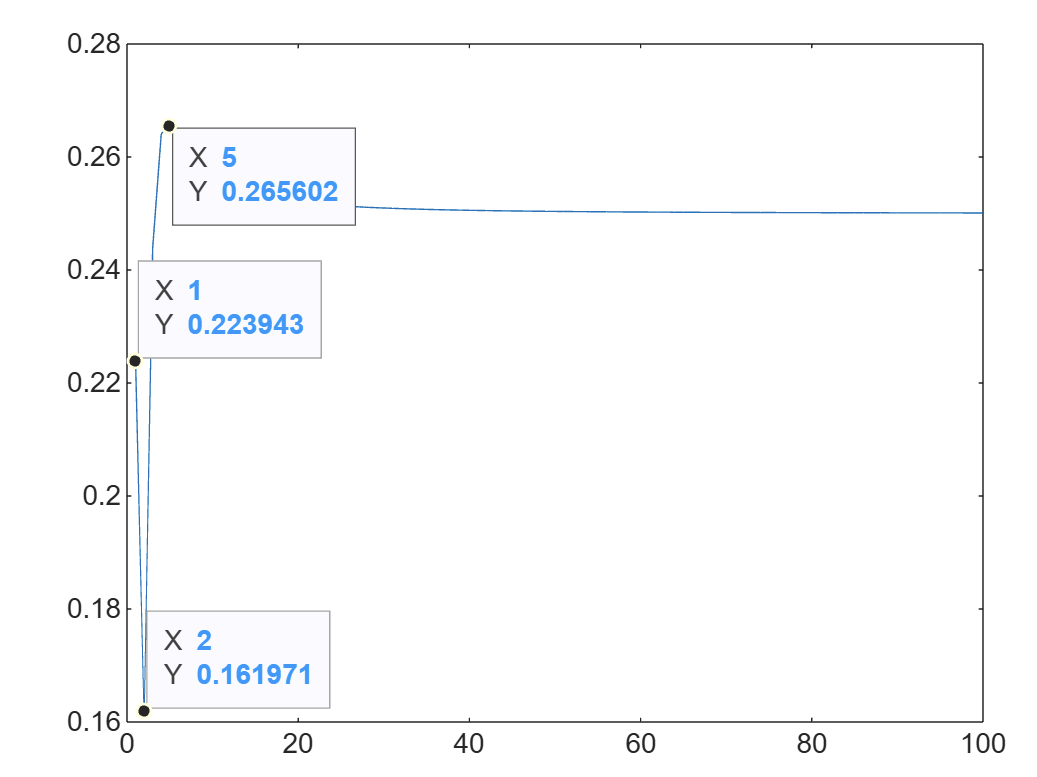}
		}
		\subfigure[Numerical illustration of $\lambda_{k}^{2}$.]{\centering
			\includegraphics[width=0.47\linewidth]{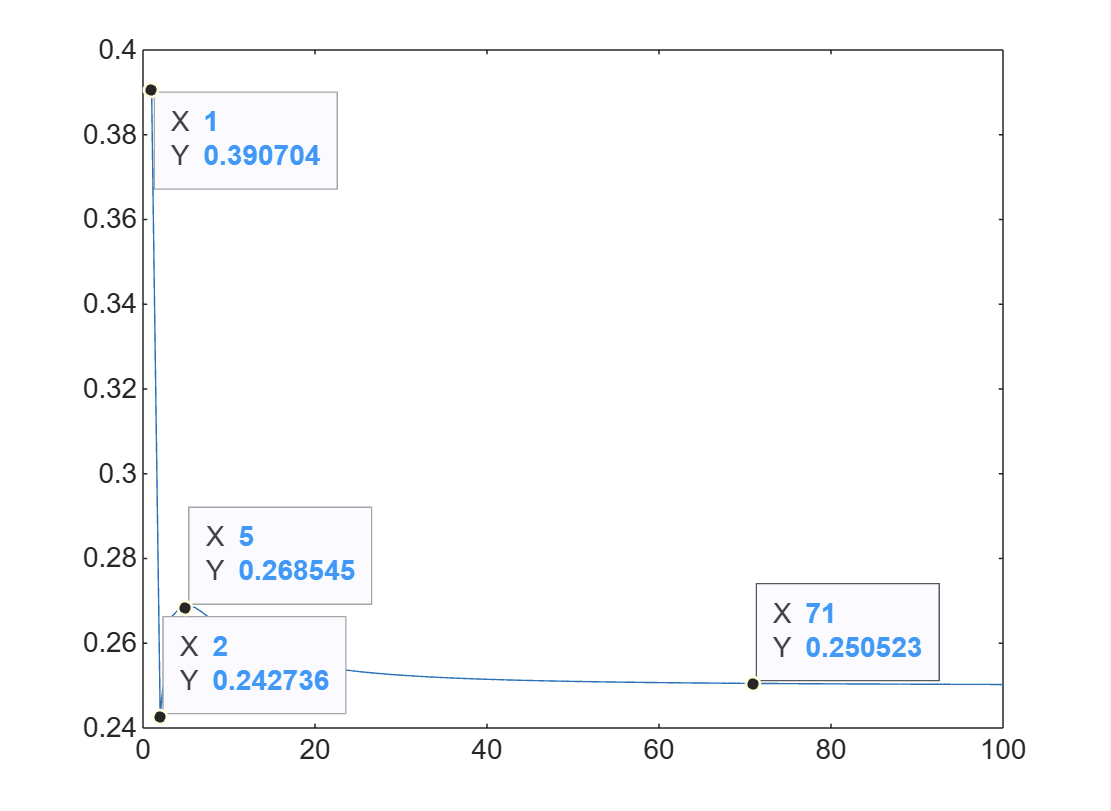}
		}
	\end{figure}
	Here
	\begin{align*}
		\lambda_{k}^{1}=&-\frac{\sqrt{{\left({\left(\frac{k^2\,\left(k+4\right)}{4\,{\left(k+2\right)}^2}-\frac{{\left(k+2\right)}^2\,\left(k+6\right)}{4\,{\left(k+4\right)}^2}\right)}^2-{\left(\frac{{\left(k-2\right)}^2\,\left(k+2\right)}{4\,k^2}-\frac{k^2\,\left(k+4\right)}{4\,{\left(k+2\right)}^2}\right)}^2\right)}^2+4\,{\left(\frac{k^2\,\left(k+6\right)}{16\,\left(k+4\right)}+\frac{{\left(k-2\right)}^2\,\left(k+4\right)}{16\,\left(k+2\right)}-\frac{k^4\,{\left(k+4\right)}^2}{8\,{\left(k+2\right)}^4}\right)}^2}}{4}\\
		&+\frac{{\left(\frac{k^2\,\left(k+4\right)}{4\,{\left(k+2\right)}^2}-\frac{{\left(k+2\right)}^2\,\left(k+6\right)}{4\,{\left(k+4\right)}^2}\right)}^2}{2}+\frac{{\left(\frac{{\left(k-2\right)}^2\,\left(k+2\right)}{4\,k^2}-\frac{k^2\,\left(k+4\right)}{4\,{\left(k+2\right)}^2}\right)}^2}{2},
	\end{align*}
	\begin{align*}
		\lambda_{k}^{2}=&\frac{\sqrt{{\left({\left(\frac{k^2\,\left(k+4\right)}{4\,{\left(k+2\right)}^2}-\frac{{\left(k+2\right)}^2\,\left(k+6\right)}{4\,{\left(k+4\right)}^2}\right)}^2-{\left(\frac{{\left(k-2\right)}^2\,\left(k+2\right)}{4\,k^2}-\frac{k^2\,\left(k+4\right)}{4\,{\left(k+2\right)}^2}\right)}^2\right)}^2+4\,{\left(\frac{k^2\,\left(k+6\right)}{16\,\left(k+4\right)}+\frac{{\left(k-2\right)}^2\,\left(k+4\right)}{16\,\left(k+2\right)}-\frac{k^4\,{\left(k+4\right)}^2}{8\,{\left(k+2\right)}^4}\right)}^2}}{4}\\
		&+\frac{{\left(\frac{k^2\,\left(k+4\right)}{4\,{\left(k+2\right)}^2}-\frac{{\left(k+2\right)}^2\,\left(k+6\right)}{4\,{\left(k+4\right)}^2}\right)}^2}{2}+\frac{{\left(\frac{{\left(k-2\right)}^2\,\left(k+2\right)}{4\,k^2}-\frac{k^2\,\left(k+4\right)}{4\,{\left(k+2\right)}^2}\right)}^2}{2}.
	\end{align*}

\end{document}